\newtheorem{thm}{Theorem}[section]
\newtheorem{lem}[thm]{Lemma}
\newtheorem{prop}[thm]{Proposition}
\newtheorem{oprob}[thm]{Open Problem}
\theoremstyle{definition}
\newtheorem{example}[thm]{Example}
\theoremstyle{definition}
\newtheorem{question}[thm]{Question}
\theoremstyle{definition}
\newtheorem{defn}[thm]{Definition}
\theoremstyle{remark}
\newtheorem{rem}[thm]{Remark}
\numberwithin{equation}{section}
\newcommand*\circled[1]{\tikz[baseline=(char.base)]{
            \node[shape=circle,draw,inner sep=0pt,minimum size=5mm] (char) {#1};}}
\newcommand{\rootsD}[5]{\circled{#1}
            \begin{tabular}{ccc}
             &#2& \\
             #3&#4&#5
             \end{tabular}}
\newcommand{\rmnum}[1]{\romannumeral #1}
\newcommand{\Rmnum}[1]{\expandafter\@slowromancap\romannumeral #1@}
\begin{document}

\title{Complete reducibility of subgroups of reductive algebraic groups over nonperfect fields \Rmnum{3}}
\author{Tomohiro Uchiyama\\
Faculty of International Liberal Arts\\
1-236, Tangi, Hachioji, Tokyo, Soka University, Japan\\
\texttt{email:t.uchiyama2170@gmail.com}}
\date{}
\maketitle 

\begin{abstract}
Let $G$ be a reductive group over a nonperfect field $k$. We study rationality problems for Serre's notion of complete reducibility of subgroups of $G$. In our previous work, we constructed examples of subgroups $H$ of $G$ that are $G$-completely reducible but not $G$-completely reducible over $k$ (and vice versa). In this paper, we give a theoretical underpinning of those constructions. Then using Geometric Invariant Theory, we obtain a new result on the structure of $G(k)$-(and $G$-) orbits in an arbitrary affine $G$-variety. We discuss several related problems to complement the main results. 
\end{abstract}

\noindent \textbf{Keywords:} algebraic groups, geometric invariant theory, complete reducibility, rationality, spherical buildings, conjugacy classes 
\section{Introduction}
Let $k$ be a field. We write $\overline k$ for an algebraic closure of $k$. Let $G$ be a (possibly non-connected) affine algebraic $k$-group: we regard $G$ as a $\overline k$-defined algebraic group together with a choice of $k$-structure in the sense of Borel~\cite[AG.~11]{Borel-AG-book}. We say that $G$ is \emph{reductive} if the unipotent radical $R_u(G)$ of $G$ is trivial. Throughout, $G$ is always a (possibly non-connected) reductive $k$-group. In this paper, we continue our study of rationality problems for complete reducibility of subgroups of $G$~\cite{Uchiyama-Nonperfect-pre},~\cite{Uchiyama-Nonperfectopenproblem-pre}. By a subgroup of $G$ we mean a (possibly non-$k$-defined) closed subgroup of $G$. If a subgroup of $G$ needs to be $k$-defined (or $G$ needs to be connected) in some statement, we explicitly say so. Recall~\cite[Sec.~3]{Serre-building}.
\begin{defn}
A subgroup $H$ of $G$ is called \emph{$G$-completely reducible over $k$} (\emph{$G$-cr over $k$} for short) if whenever $H$ is contained in a $k$-defined $R$-parabolic subgroup $P$ of $G$, then $H$ is contained in a $k$-defined $R$-Levi subgroup of $P$. In particular if $H$ is not contained in any proper $k$-defined $R$-parabolic subgroup of $G$, $H$ is called \emph{$G$-irreducible over $k$} (\emph{$G$-ir over $k$} for short). We simply say that $H$ is \emph{$G$-cr} (\emph{$G$-ir}) if $H$ is $G$-cr over $\overline k$ ($G$-ir over $\overline k$). 
\end{defn}
We define $R$-parabolic subgroups and $R$-Levi subgroups in the next section (Definition~\ref{R-parabolic}). These concepts are essential to extend the notion of complete reducibility (initially defined only for subgroups of connected $G$~\cite[Sec.~3]{Serre-building}) to subgroups of non-connected $G$~\cite{Bate-nonconnected-PAMS},~\cite[Sec.~6]{Bate-geometric-Inventione}. We defined complete reducibility for a possibly non-$k$-defined subgroup of $G$. This is because for a subgroup $H$ of $G$, some closely related important subgroups of $G$ are not necessarily $k$-defined even if $H$ is $k$-defined. For example, centralizers or normalizers of $k$-subgroups of $G$ are not necessarily $k$-defined; see~\cite[Thm.~1.2 and Thm.~1.7]{Uchiyama-Nonperfect-pre} for such examples. If $G$ is connected and $H$ is a subgroup of $G(k)$, our notion of complete reducibility agrees with the usual one of Serre.

So far, most work on complete reducibility has considered only the case $k=\overline k$; see~\cite{Liebeck-Seitz-memoir},~\cite{Stewart-nonGcr},~\cite{Thomas-irreducible-JOA} for example. Not much is known on complete reducibility over $k$ for general fields $k$ (especially for nonperfect $k$) except for a few theoretical results and important examples in~\cite[Sec.~5]{Bate-geometric-Inventione},~\cite{Bate-cocharacter-Arx},~\cite{Uchiyama-Nonperfect-pre},~\cite{Uchiyama-Nonperfectopenproblem-pre}. In particular, in~\cite[Thm.~1.10]{Uchiyama-Separability-JAlgebra}~\cite[Thm.~1.8]{Uchiyama-Classification-pre},~\cite[Thm.~1.2]{Uchiyama-Nonperfect-pre}, we found several examples of $k$-subgroups of $G$ that are $G$-cr over $k$ but not $G$-cr (and vice versa). The main result of this paper is to give a theoretical underpinning for our (possibly somewhat mysterious) construction of those examples. For an algebraic extension $k'$ of $k$ and an affine group $N$, we denote the set of $k'$-points of $N$ by $N(k')$. For a subgroup $H$ of $G$, we write $g\cdot H$ for $g H g^{-1}$ where $g\in G$.  

\begin{thm}\label{Prop1}
Let $k$ be a nonperfect separably closed field. Suppose that a subgroup $H$ of $G$ is $G$-cr but not $G$-cr over $k$. Let $P$ be minimal among $k$-defined $R$-parabolic subgroups of $G$ containing $H$. Then there exists a unipotent element $u\in R_u(P)(\overline k)$ such that $u\cdot H$ is $G$-cr over $k$. 
\end{thm}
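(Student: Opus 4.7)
The plan is to use that $H$ is $G$-cr over $\overline k$ to produce an element $u\in R_u(P)(\overline k)$ that conjugates $H$ into a $k$-defined $R$-Levi $L$ of $P$, and then to bootstrap the minimality of $P$ into $L$-irreducibility of $u\cdot H$ over $k$. From that, $G$-complete reducibility over $k$ will follow by the rational analogue of the Bate--Martin--R\"ohrle Levi passage.

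To produce $u$: fix a $k$-defined $R$-Levi $L$ of $P$ (which exists because $P$ is $k$-defined), and write $P=P_\lambda$, $L=L_\lambda$ for a $k$-defined cocharacter $\lambda$. Since $H$ is $G$-cr over $\overline k$ and $H\subseteq P$, $H$ lies in some $R$-Levi $L'$ of $P$. The $R$-Levis of $P$ are mutually conjugate by an element of $R_u(P)(\overline k)$, so there exists $u\in R_u(P)(\overline k)$ with $uL'u^{-1}=L$, and hence $u\cdot H\subseteq L$. This $u$ is the candidate.

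The heart of the argument is to show that $u\cdot H$ is $L$-irreducible over $k$. Suppose otherwise, so $u\cdot H\subseteq Q$ for some proper $k$-defined $R$-parabolic $Q$ of $L$; write $Q=P_\mu\cap L$ for a $k$-defined cocharacter $\mu$ of $L$. For $m$ sufficiently large the $k$-defined cocharacter $m\lambda+\mu$ of $G$ satisfies $P_{m\lambda+\mu}=Q\cdot R_u(P)$, which is a $k$-defined $R$-parabolic of $G$ strictly contained in $P$ (since $Q\subsetneq L$ forces $\dim(Q\cdot R_u(P))<\dim P$). Crucially $u\in R_u(P)\subseteq Q\cdot R_u(P)$, so conjugation by $u^{-1}$ preserves $Q\cdot R_u(P)$ and
$$H=u^{-1}(u\cdot H)u\subseteq u^{-1}(Q\cdot R_u(P))u=Q\cdot R_u(P),$$
contradicting the minimality of $P$ among $k$-defined $R$-parabolics of $G$ containing $H$.

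Since $u\cdot H$ is $L$-ir over $k$, it is in particular $L$-cr over $k$, and the rational version of the Levi passage (developed in the earlier papers of this series, e.g.~\cite{Uchiyama-Nonperfect-pre}) then yields that $u\cdot H$ is $G$-cr over $k$, completing the proof. The main obstacle I anticipate is invoking this rational Levi passage cleanly: the cocharacter identity $P_{m\lambda+\mu}=Q\cdot R_u(P)$ and the minimality contradiction are routine manipulations, but the equivalence ``$H\subseteq L$ is $L$-cr over $k$ iff $G$-cr over $k$'' for a $k$-defined $R$-Levi $L$ is the subtlest ingredient in the nonperfect setting and must be quoted or re-derived with care.
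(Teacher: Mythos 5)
Your proposal is correct and follows essentially the same route as the paper: use $G$-complete reducibility and $R_u(P)(\overline k)$-conjugacy of $R$-Levi subgroups to get $u$ with $u\cdot H\le L_\lambda$, then contradict minimality of $P$ via the $k$-defined $R$-parabolic $P_L\ltimes R_u(P)$ (fixed under conjugation by $u\in R_u(P)$), and pass between $L$-cr over $k$ and $G$-cr over $k$ by the rational Levi equivalence (the paper's Proposition~\ref{G-cr-L-cr}). The only cosmetic difference is that you prove $L$-irreducibility of $u\cdot H$ over $k$ directly and then apply the Levi passage, while the paper runs the same contradiction contrapositively starting from ``$u\cdot H$ not $G$-cr over $k$''.
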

  
\begin{thm}\label{Prop2}
Let $k$ be a nonperfect separably closed field. Suppose that a subgroup $H$ of $G$ is $G$-cr over $k$ but not $G$-cr. Let $P$ be minimal among $k$-defined $R$-parabolic subgroups of $G$ containing $H$. Then 
\begin{enumerate}
\item $H$ is $L$-ir over $k$ for some $k$-defined $R$-Levi subgroup $L$ of $P$. 
\item Moreover, there exists an element $l\in L(\overline k)$ such that $l\cdot H$ is not $G$-cr over $k$.  
\end{enumerate}
\end{thm}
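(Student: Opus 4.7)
The strategy is straightforward: part (1) is a minimality-plus-cocharacter argument, and part (2) reduces the construction to conjugating a problematic $\overline k$-defined $R$-parabolic of $L$ to a $k$-defined one, using that $L^0$ is split over the separably closed field $k$.

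For part (1), the definition of $G$-complete reducibility over $k$ applied to the $k$-defined $R$-parabolic $P$ places $H$ inside some $k$-defined $R$-Levi $L$ of $P$. If $H$ were not $L$-irreducible over $k$, it would lie in a proper $k$-defined $R$-parabolic $Q$ of $L$. Writing $P=P_\lambda$, $L=L_\lambda$ for some $k$-defined cocharacter $\lambda$ of $G$ and $Q=P_\mu$ for some $k$-defined cocharacter $\mu$ of $L$, a standard cocharacter calculation shows that for all sufficiently large positive integers $n$, the cocharacter $n\lambda+\mu$ is $k$-defined and satisfies $P_{n\lambda+\mu}=Q\cdot R_u(P)$, which is a proper $k$-defined $R$-parabolic of $G$ properly contained in $P$ and still containing $H$. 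This contradicts the minimality of $P$.

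For part (2), since $L$ is a $k$-defined $R$-Levi of $G$ and $H$ is not $G$-cr, the Bate--Martin--R\"ohrle Levi/parabolic equivalence (a subgroup of an $R$-Levi of $G$ is $G$-cr iff it is Levi-cr) forces $H$ not to be $L$-cr. Hence there is a proper $\overline k$-defined $R$-parabolic $P'$ of $L$ with $H\subseteq P'$ and with no $R$-Levi subgroup of $P'$ containing $H$; moreover, $P'$ cannot be $k$-defined, since otherwise part (1) would be contradicted. Over the separably closed field $k$ every $k$-torus is split, so $L^0$ is a split reductive group and $L$ possesses a $k$-defined $R$-parabolic $P^\ast$ of the same $\overline k$-conjugacy type as $P'$. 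Picking $l\in L(\overline k)$ with $l\cdot P'=P^\ast$, the conjugate $l\cdot H$ sits inside the $k$-defined $R$-parabolic $P^\ast$ of $L$, and since the $R$-Levi subgroups of $P^\ast$ are precisely the $l$-conjugates of the $R$-Levi subgroups of $P'$, none of them (in particular, no $k$-defined $R$-Levi of $P^\ast$) contains $l\cdot H$. Thus $l\cdot H$ is not $L$-cr over $k$; a direct rational version of Bate--Martin--R\"ohrle (extend a $k$-defined $R$-parabolic $Q$ of $L$ to $Q\cdot R_u(P)$ in $G$, and intersect a $k$-defined $R$-Levi back with $L$) shows that any subgroup of $L$ which is $G$-cr over $k$ is also $L$-cr over $k$; contrapositively, $l\cdot H$ is not $G$-cr over $k$.

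The main obstacle is to justify cleanly the two transfer principles used above: the Levi/parabolic equivalence for $G$-complete reducibility (over both $\overline k$ and $k$), and the existence of a $k$-defined $R$-parabolic of $L$ realizing any given $\overline k$-conjugacy type, i.e., the splitness of $L^0$ over the separably closed base. Both are standard in the framework of this series of papers — a $k$-rational form of the first appears in earlier works in the series, and the second rests on Grothendieck's splitness theorem for reductive groups over separably closed fields — so once these are invoked, the remainder of the argument is essentially cocharacter bookkeeping.
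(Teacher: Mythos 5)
Your proof is correct, and part (1) is essentially the paper's argument (the paper writes the new parabolic as $P_L\ltimes R_u(P_\lambda)$; your explicit cocharacter $n\lambda+\mu$ is exactly the standard justification that this is a $k$-defined $R$-parabolic of $G$ properly contained in $P$). For part (2), however, you take a genuinely different and in fact more direct route. The paper chooses a \emph{minimal} proper $R$-parabolic subgroup $P_L$ of $L$ containing $H$ (over $\overline k$), conjugates it by $l\in L(\overline k)$ to a $k$-defined one, and then assumes for contradiction that $l\cdot H$ is $G$-cr over $k$: this produces a $k$-defined $R$-Levi $M$ of $l\cdot P_L$ containing $l\cdot H$, a proper $R$-parabolic $P_M$ of $M$ (since $l\cdot H$ is not $M$-cr), and hence the $R$-parabolic $P_M\ltimes R_u(l\cdot P_L)$ of $L$, whose $l^{-1}$-conjugate violates the minimality of $P_L$. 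You instead pick a \emph{witness} parabolic $P'$ certifying the failure of $L$-complete reducibility (no $R$-Levi of $P'$ contains $H$ --- and you are right that such a $P'$ is automatically proper), conjugate it to a $k$-defined $P^\ast$, and observe that the $R$-Levi subgroups of $P^\ast$ are the $l$-conjugates of those of $P'$, so $l\cdot H$ fails the definition of $L$-cr over $k$ at $P^\ast$ outright; the transfer principle (the paper's Proposition on $G$-cr over $k$ versus $L$-cr over $k$ for subgroups of a $k$-defined $R$-Levi) then finishes. Both arguments rest on the same two external inputs --- conjugacy of any $R$-parabolic of $L$ to a $k$-defined one over the separably closed base, and the Levi/parabolic transfer over $k$ and over $\overline k$ --- and your appeal to them as known results matches how the paper itself uses them. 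What your shortcut buys is brevity: no minimality bookkeeping and no nested contradiction. What the paper's minimality scheme buys is uniformity: it is the same template the author later generalizes verbatim to the GIT setting (orbits in an arbitrary affine $G$-variety), where one works with destabilizing cocharacters and a minimal $P_\lambda$ rather than with a ``witness parabolic'' in your sense.
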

To illustrate our theoretical results (Theorems~\ref{Prop1},~\ref{Prop2}) and ideas in the proofs, we present a new example of a $k$-subgroup of $G$ that is $G$-cr over $k$ but not $G$-cr (and vice versa).
\begin{thm}\label{D4example}
Let $k$ be a nonperfect separably closed field of characteristic $2$. Let $\tilde G$ be a simple $k$-group of type $D_4$. Let $\sigma$ be a non-trivial element of order $3$ in the outer automorphism group of $\tilde G$. 
Let $G:=\tilde G\rtimes \langle \sigma \rangle$. Then there exists a $k$-subgroup $H$ of $G$ that is $G$-cr over $k$ but not $G$-cr (and vice versa).
\end{thm}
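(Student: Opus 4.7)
The plan is to construct both examples explicitly, using Theorems~\ref{Prop1} and~\ref{Prop2} as blueprints. Fix a $k$-defined, $\sigma$-stable maximal $k$-split torus $T$ of $\tilde G$ and a $\sigma$-stable Borel $B\supset T$; let $\Phi$ be the associated $D_4$ root system with simple roots $\alpha_1,\alpha_2,\alpha_3,\alpha_4$, labelled so that $\alpha_2$ is the central node of the Dynkin diagram and $\sigma$ acts on the outer nodes $\{\alpha_1,\alpha_3,\alpha_4\}$. Pick Chevalley parametrizations $u_\alpha:\mathbb{G}_a\to U_\alpha$ that are $k$-defined and $\sigma$-equivariant, and fix once and for all an element $t\in k\setminus k^2$; the element $t^{1/2}\in\overline k$ is the sole source of nonrationality in what follows.

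For the first example, which is to be $G$-cr but not $G$-cr over $k$, I follow Theorem~\ref{Prop1}: choose a minimal $k$-defined $R$-parabolic $P$ of $G$ together with a $k$-defined $R$-Levi $L$ of $P$, and pick $H_0\subset L\cdot\langle\sigma\rangle$ that is $L$-irreducible over $k$ and generated by $\sigma$ together with suitable $k$-rational root-group elements. For a $\sigma$-orbit $\{\beta,\sigma(\beta),\dots\}$ of positive roots lying in $R_u(P)$, I take $u$ to be the $\sigma$-symmetrized product of $u_\beta(t^{1/2})$ over this orbit, an element of $R_u(P)(\overline k)$. The $\sigma$-equivariance of the generators, combined with the characteristic-$2$ identity $(t^{1/2})^2=t\in k$, makes $H:=u\cdot H_0$ a $k$-defined subgroup of $G$ even though $u\notin G(k)$. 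Then $H$ is $G$-cr because it is $G(\overline k)$-conjugate to $H_0$, and any hypothetical $k$-defined $R$-Levi of $P$ containing $H$ would force $u\in L(k)\cdot R_u(P)(k)$, contradicting $t^{1/2}\notin k$.

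For the second example, which is to be $G$-cr over $k$ but not $G$-cr, I follow Theorem~\ref{Prop2}: choose a $k$-defined $R$-parabolic $P$ and $k$-defined $R$-Levi $L$, construct $H_0\subset L$ that is $L$-irreducible over $k$ and such that $P$ is minimal among $k$-defined $R$-parabolics of $G$ containing $H_0$ (ensuring $G$-complete reducibility of $H_0$ over $k$), and then twist by an element $l\in L(\overline k)$ built from $u_\gamma(t^{1/2})$ for roots $\gamma$ in a suitable $\sigma$-orbit inside $L$. The same $\sigma$-equivariance keeps $H:=l\cdot H_0$ $k$-defined; but the $D_4$ geometry, with $\sigma$ permuting three outer nodes of the Dynkin diagram, provides a strictly smaller $\overline k$-defined $R$-parabolic $P'$ of $G$ that contains $l\cdot H_0$ yet admits no $\overline k$-defined Levi containing it, witnessing failure of $G$-complete reducibility.

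The genuinely hard step in either direction is the irreducibility verification: for each candidate $H$ one must control \emph{all} (respectively, all $k$-defined) $R$-parabolic subgroups of $G$ containing $H$, up to the relevant conjugacy. I would do this by enumerating $\sigma$-stable subsets of simple roots of $D_4$---a short list since the Dynkin diagram is small---and checking that the chosen generators of $H_0$ force $H$ out of every proper candidate. The nonperfectness of $k$ is then used only at the very last step, to conclude that a $G(\overline k)$-conjugacy relating the two subgroups fails to descend to $G(k)$; this obstruction is witnessed explicitly by the inseparability of $t^{1/2}$ over $k$, and the characteristic-$2$ hypothesis is what allows the $\sigma$-equivariant sums of root-group elements in a single $\sigma$-orbit to remain $k$-defined.
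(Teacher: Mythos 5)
Your outline reproduces the paper's general philosophy (twist an $L$-irreducible subgroup by a carefully chosen element with entries $\sqrt{a}$, $a\in k\setminus k^2$, so that characteristic $2$ makes the twist $k$-defined), but at the two places where the theorem actually has content the argument is either missing or invalid. For the first example, the step ``any hypothetical $k$-defined $R$-Levi of $P$ containing $H$ would force $u\in L(k)\cdot R_u(P)(k)$'' is a non sequitur: if $H=u\cdot H_0$ lies in $u'\cdot L$ with $u'\in R_u(P)(k)$, you only learn that $u'^{-1}u$ transports $H_0$ into $L$, and this transporter is strictly larger than you expect precisely because $H_0$ has a positive-dimensional (and, crucially, nonseparable) centralizer meeting $R_u(P)$. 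The genuine obstruction cannot be formal: if the twisted subgroup were separable, $G$-cr would already imply $G$-cr over $k$ (\cite[Thm.~1.5]{Bate-cocharacter-Arx} plus Proposition~\ref{geometric}), so any correct proof must exhibit nonseparability and carry out the explicit commutation-relation computation. In the paper this is the calculation of $u^{-1}\cdot(n_\alpha\sigma\,\epsilon_{12}(a))$ for a general $u\in R_u(P_\lambda)(k)$, which reduces the containment in $L_\lambda$ to the equation $(y+x_9)^2=a$ with no solutions in $k$. You name neither the roots, nor the generators, nor this computation, and your proposed irreducibility check (enumerating $\sigma$-stable subsets of simple roots) only classifies parabolics up to conjugacy; controlling \emph{which conjugates} contain $H$ is where the work lies (in the paper, a centralizer computation plus a Bruhat-decomposition case analysis).

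The second example has a structural flaw: $G$-complete reducibility over $\overline k$ is invariant under conjugation by any element of $G(\overline k)$, so twisting $H_0$ by $l\in L(\overline k)$ cannot destroy $G$-cr --- your $H=l\cdot H_0$ is non-$G$-cr only if $H_0$ already is, and then the property you actually arranged ($G$-cr over $k$ of $H_0$, via minimality of $P$) does not transfer to $H$, since $l\notin G(k)$. So the hard direction, that the twisted, $k$-defined subgroup is $G$-cr (in fact $G$-ir) over $k$, is left unproved. In the paper this is exactly where the effort goes: the subgroup is $H=\langle v(\sqrt a)\cdot\langle n_\alpha\sigma,(\alpha+\gamma)^{\vee}(\overline k^*)\rangle,\ \epsilon_{11}(1)\rangle$ with an \emph{extra} unipotent generator, one shows $v(\sqrt a)\cdot P_\lambda$ is the \emph{unique} proper $R$-parabolic containing $H$ (Lemma~\ref{uniquepara}) and that it is not $k$-defined (Lemma~\ref{nonkdefined}), giving $G$-irreducibility over $k$; failure of $G$-cr is then obtained by computing that $C_G(H)^\circ$ is unipotent and invoking Borel--Tits together with \cite[Cor.~3.17]{Bate-geometric-Inventione}, not by exhibiting a parabolic $P'$ directly. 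As it stands, your proposal is a plan whose two key verifications (non-$G$-cr over $k$ in the first case, $G$-ir over $k$ and non-$G$-cr in the second) are respectively invalid and absent.
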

A few comments are in order. First, the non-perfectness of $k$ is (almost) essential in Theorems~\ref{Prop1},~\ref{Prop2}, and~\ref{D4example} in view of the following~\cite[Thm.~1.1]{Bate-separable-Paris}:
\begin{prop}\label{paris}
Let $G$ be connected. Let $H$ be a subgroup of $G$. Then $H$ is $G$-cr over $k$ if and only if $H$ is $G$-cr over $k_s$. 
\end{prop}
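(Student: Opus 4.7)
I would adapt the Bate--Martin--R\"ohrle geometric framework. Choose a tuple $\mathbf{h} = (h_1, \dots, h_n) \in H^n$ that generates $H$ as a closed subgroup of $G$, and consider the simultaneous conjugation action of $G$ on $G^n$. Using connectedness of $G$ (so that Borel--Tits provides $k$-defined Levi subgroups of every $k$-defined R-parabolic), the statement ``$H$ is $G$-cr over $F$'' for $F \in \{k, k_s\}$ can be reformulated as cocharacter-closedness of the $G$-orbit of $\mathbf{h}$ over $F$: for every $\lambda \in Y_F(G)$ such that $\mathbf{h}' := \lim_{t \to 0} \lambda(t) \cdot \mathbf{h}$ exists, $\mathbf{h}'$ lies in $G(F) \cdot \mathbf{h}$. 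The task then splits into the two implications between cocharacter-closedness over $k$ and over $k_s$.

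For \emph{$k_s$-cr $\Rightarrow$ $k$-cr}: given $\lambda \in Y_k(G)$ whose limit $\mathbf{h}'$ exists, the $k_s$-hypothesis writes $\mathbf{h}' = g \cdot \mathbf{h}$ with $g \in G(k_s)$, and a standard comparison with the canonical limit element forces $g \in R_u(P_\lambda)(k_s)$. Since $P_\lambda$ is $k$-defined, $R_u(P_\lambda)$ is $k$-split unipotent (filtered by $k$-defined root subgroups each isomorphic to $\mathbb{G}_a$), so $H^1(\mathrm{Gal}(k_s/k), R_u(P_\lambda)(k_s)) = 1$. Combined with the $k$-structure on the $R_u(P_\lambda)$-orbit of $\mathbf{h}$, a Galois-cocycle descent applied to $\sigma \mapsto g^{-1}\sigma(g)$ produces $g_0 \in R_u(P_\lambda)(k)$ with $\mathbf{h}' = g_0 \cdot \mathbf{h}$, as required.

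For the converse \emph{$k$-cr $\Rightarrow$ $k_s$-cr}: a $k_s$-defined cocharacter need not be $G(k_s)$-conjugate to a $k$-defined one, so I would induct on $\dim G$. Let $P$ be a minimal $k$-defined R-parabolic containing $H$; by $k$-cr, $H$ lies in a $k$-defined R-Levi $L$ of $P$ and is $L$-ir over $k$. If $P \neq G$, then $\dim L < \dim G$ and the inductive hypothesis in $L$ gives $L$-cr over $k_s$, which upgrades to $G$-cr over $k_s$ by the standard parabolic descent for complete reducibility. The base case $P = G$ is where $H$ is $G$-ir over $k$; here one invokes the rational Hilbert--Mumford--Kempf--Rousseau theorem to convert any $k_s$-defined destabilizer of $\mathbf{h}$ into a $k$-defined optimal cocharacter in the same $G(k_s)$-orbit class, contradicting $G$-irreducibility over $k$.

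\textbf{Main obstacle.} The hard part is this base case of the converse: upgrading a $k_s$-defined destabilizer to a $k$-defined one. The optimal-class of Kempf--Rousseau is $\mathrm{Gal}(k_s/k)$-equivariant precisely because $k_s/k$ is separable, and combining this Galois-stability with the vanishing of $H^1$ for $k$-split unipotent radicals used above delivers the descent. Both ingredients fail over the purely inseparable extension $\overline{k}/k_s$ that underlies Theorems~\ref{Prop1} and~\ref{Prop2}, which is why the proposition cannot be strengthened beyond separable algebraic extensions.
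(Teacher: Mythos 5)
First, note that the paper does not prove this proposition: it is quoted from \cite{Bate-separable-Paris}, and the surrounding remarks record the division of labour there --- the forward implication ($G$-cr over $k$ $\Rightarrow$ $G$-cr over $k_s$) holds even for non-connected $G$, while the reverse implication ($G$-cr over $k_s$ $\Rightarrow$ $G$-cr over $k$) rests on the Tits center conjecture applied to the contractible fixed-point subcomplex, which is exactly why connectedness is assumed. Measured against this, your sketch inverts where the difficulty lies, and the step you treat as routine is the one with a genuine gap. In your descent direction you write $\mathbf{h}'=g\cdot\mathbf{h}$ with $g\in R_u(P_\lambda)(k_s)$ and split the cocycle $\sigma\mapsto g^{-1}\sigma(g)$ using $H^1(\mathrm{Gal}(k_s/k),R_u(P_\lambda)(k_s))=1$. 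But (granting that $\mathbf{h}$ and hence $\mathbf{h}'$ are Galois-fixed, which already requires $H$ to be generated by $k$-points or a generic-tuple formalism, whereas the proposition allows arbitrary $H$) this cocycle takes values in the stabilizer of $\mathbf{h}$, i.e.\ in $C_{R_u(P_\lambda)}(H)(k_s)$, so what you need is $H^1(\mathrm{Gal}(k_s/k),C_{R_u(P_\lambda)}(H)(k_s))=1$. Vanishing for the ambient $k$-split unipotent radical does not pass to this subgroup: over nonperfect $k$ the scheme-theoretic centralizer can be non-smooth (this is precisely the nonseparability phenomenon the present paper is about), its reduced part need not be $k$-defined, and smooth non-split (``wound'') unipotent groups over nonperfect fields can have non-trivial, even infinite, $H^1$. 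If your torsor argument worked as stated, it would give a proof of the reverse direction avoiding the center conjecture and valid for non-connected $G$, which is not what the literature the paper cites is able to do; circumventing exactly this obstruction is why \cite{Bate-separable-Paris} passes through the center conjecture, and it is also why the $R_u(P_\lambda)(k)$-conjugacy questions over nonperfect fields (Propositions~\ref{perfectconjugacy}--\ref{closureconjugacy} and \cite[Question~7.8]{Bate-cocharacter-Arx}) are delicate.

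Your other direction has a related problem: the base case ``invoke the rational Hilbert--Mumford--Kempf--Rousseau theorem to convert a $k_s$-defined destabilizer into a $k$-defined optimal cocharacter'' presupposes a canonical, Galois-equivariant optimality theory over $k_s$ for orbits that are merely not cocharacter-closed (not unstable), together with descent of the optimal parabolic and of a cocharacter in it to $k$; establishing this in the needed generality is essentially the Galois ascent/descent theory for cocharacter-closedness, parts of which are themselves obtained via the center conjecture, so as written this is an appeal to the conclusion rather than an argument. (Also, connectedness is not needed where you say --- existence of $k$-defined Levi subgroups of $k$-defined parabolics --- but in the center-conjecture step, since for non-connected $G$ the $R$-parabolic subgroups do not form a simplicial building \cite{Uchiyama-Nonperfectopenproblem-pre}.) In short: the reduction to cocharacter-closedness is a reasonable framework, but both implications, as proposed, rest on steps that fail or beg the question precisely at the nonperfect-field subtleties the proposition is about.
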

So in particular if $k$ is perfect and $G$ is connected, a subgroup of $G$ is $G$-cr over $k$ if and only if it is $G$-cr. The forward direction of Proposition~\ref{paris} holds for non-connected $G$. The reverse direction depends on the recently proved \emph{center conjecture of Tits}~\cite{Serre-building},~\cite{Tits-colloq},~\cite{Weiss-center-Fourier} in spherical buildings, but this method does not work for non-connected $G$; the set of $R$-parabolic subgroups does not form a simplicial complex in the usual sense of Tits~\cite{Tits-book} as we have shown in~\cite[Thm.~1.12]{Uchiyama-Nonperfectopenproblem-pre}. In the following we assume that $k$ is separably closed. So every maximal $k$-torus of $G$ splits over $k$, thus $G$ is $k$-split. This simplifies arguments in many places. For the theory of complete reducibility over arbitrary $k$, see~\cite{Bate-cocharacter-Arx},~\cite{Bate-cocharacterbuildings-Arx}.

Second, note that the $k$-definedness of $H$ in Theorem~\ref{D4example} is important. 
Actually it is not difficult to find a $\overline k$-subgroup with the desired property. For our construction to work, it is essential for $H$ to be $\emph{nonseparable}$ in $G$. We write $\textup{Lie}(G)$ or $\mathfrak g$ for the Lie algebra of $G$. Recall~\cite[Def.~1.1]{Bate-separability-TransAMS}.
\begin{defn}
A subgroup $H$ of $G$ is \emph{nonseparable} if the dimension of $\textup{Lie}(C_G(H))$ is strictly smaller than the dimension of $\mathfrak{c}_{\mathfrak{g}}(H)$ (where $H$ acts on $\mathfrak g$ via the adjoint action). In other words, the scheme-theoretic centralizer of $H$ in $G$ (in the sense of~\cite[Def.~A.1.9]{Conrad-pred-book}) is not smooth. 
\end{defn} 
We exhibit the importance of nonseparability of $H$ in the proof of Theorem~\ref{D4example}. Proper nonseparable $k$-subgroups of $G$ are uncommon, and only a handful examples are known~\cite[Sec.~7]{Bate-separability-TransAMS},~\cite[Thm.~1.10]{Uchiyama-Separability-JAlgebra}~\cite[Thm.~1.8]{Uchiyama-Classification-pre},~\cite[Thm.~1.2]{Uchiyama-Nonperfect-pre}. Note that if characteristic of $k$ is very good for connected $G$, every subgroup of $G$ is separable~\cite[Thm.~1.2]{Bate-separability-TransAMS}. Thus, to find a nonseparable subgroup we are forced to work in small $p$ (at least for connected $G$). See~\cite{Bate-separability-TransAMS},~\cite{Herpel-smoothcentralizerl-trans} for more on separability. 

Our second main result is a generalization of Theorems~\ref{Prop1} and~\ref{Prop2} using the language of Geometric Invariant Theory (GIT for short)~\cite{Mumford-GIT-book}. Let $V$ be a (possibly non-connected) affine $k$-variety. When $G$ acts on $V$ $k$-morphically, we say that $V$ is a $G$-variety. One of the main themes of GIT is to study the structure of $G$-orbits (and $G(k)$-orbits) in $V$~\cite{Kempf-instability-Ann},~\cite{Bate-uniform-TransAMS},~\cite{Bate-cocharacter-Arx}. Recently studies on complete reducibility (over $k$) via GIT have been very fruitful; GIT gives a very short and uniform proof for many results~\cite{Bate-geometric-Inventione},~\cite{Bate-uniform-TransAMS},~\cite{Bate-cocharacter-Arx}. This makes a striking contrast to traditional representation theoretic methods (which depend on long case-by-case analyses)~\cite{Liebeck-Seitz-memoir},~\cite{Stewart-nonGcr},~\cite{Thomas-irreducible-JOA}. 

We recall the following algebro-geometric characterization for complete reducibility (over $k$) via GIT (\cite[Prop.~2.16, Thm.~3.1]{Bate-geometric-Inventione} and~\cite[Thm.~9.3]{Bate-cocharacter-Arx}). This turns problems on complete reducibility into problems on the structure of $G(k)$-(or $G$-) orbits. Let $H$ be a subgroup of $G$ such that $H=\langle h_1,\cdots, h_n \rangle$ for some $n\in \mathbb{N}$ and $\textbf{h}:=(h_1,\cdots,h_n)\in G^n$. Suppose that $G$ (and $G(k)$) acts on $G^n$ via simultaneous conjugation. 
\begin{prop}\label{geometric}
$H$ is $G$-cr if and only if $G\cdot \textbf{h}$ is Zariski closed in $G^n$. Moreover, $H$ is $G$-cr over $k$ if and only if $G(k)\cdot \textbf{h}$ is cocharacter closed over $k$.  
\end{prop}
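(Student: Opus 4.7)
The plan is to combine the correspondence between cocharacters and $R$-parabolic subgroups with the Hilbert--Mumford--Kempf criterion for orbit closure. Recall that for a cocharacter $\lambda$ of $G$, the subgroup $P_\lambda = \{g \in G : \lim_{t \to 0} \lambda(t) g \lambda(t)^{-1} \text{ exists}\}$ is an $R$-parabolic subgroup, $L_\lambda = C_G(\lambda)$ is an $R$-Levi subgroup of $P_\lambda$, and every $R$-parabolic (respectively every $R$-Levi of $P_\lambda$) arises this way; the $R$-Levi subgroups of $P_\lambda$ are precisely the $R_u(P_\lambda)$-conjugates of $L_\lambda$. Under simultaneous conjugation of $G$ on $G^n$, the condition $H \subseteq P_\lambda$ is equivalent to the existence of $\lim_{t \to 0} \lambda(t) \cdot \textbf{h}$ in $G^n$, and $H \subseteq L_\lambda$ is equivalent to $\lambda$ fixing $\textbf{h}$. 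This translates containment of $H$ in $R$-parabolic or $R$-Levi subgroups into statements about one-parameter limits, setting up both equivalences.

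For the first equivalence (over $\overline k$), suppose $H$ is $G$-cr and assume for contradiction that $G\cdot \textbf{h}$ is not Zariski closed. By Hilbert--Mumford--Kempf there is a cocharacter $\lambda$ such that $\textbf{h}' := \lim_{t \to 0} \lambda(t)\cdot \textbf{h}$ exists and lies in a closed orbit inside $\overline{G\cdot\textbf{h}} \setminus G\cdot\textbf{h}$. Then $H \subseteq P_\lambda$, and $G$-complete reducibility places $H$ inside some $u L_\lambda u^{-1}$ with $u \in R_u(P_\lambda)$. Hence $u^{-1}\cdot \textbf{h} \in L_\lambda^n$ is $\lambda$-fixed; on the other hand $\lambda(t) u^{-1} \lambda(t)^{-1} \to e$ as $t \to 0$, so $\lim_{t\to 0}\lambda(t)\cdot(u^{-1}\cdot\textbf{h}) = \textbf{h}'$, forcing $u^{-1}\cdot\textbf{h} = \textbf{h}'$, contradicting $\textbf{h}' \notin G\cdot\textbf{h}$. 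Conversely, assume $G\cdot\textbf{h}$ is closed and $H \subseteq P_\lambda$; then $\textbf{h}' = \lim \lambda(t)\cdot\textbf{h}$ lies in $G\cdot\textbf{h}$, and the crucial step (isolated as the main obstacle below) is to show $\textbf{h}' = u\cdot\textbf{h}$ for some $u \in R_u(P_\lambda)$, whence $H \subseteq u^{-1} L_\lambda u$, an $R$-Levi of $P_\lambda$.

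For the second equivalence (over $k$), cocharacter-closedness of $G(k)\cdot \textbf{h}$ is phrased directly in terms of $k$-defined cocharacters, and $k$-defined $R$-parabolic subgroups correspond to $k$-defined cocharacters. The argument of the $\overline k$ case adapts with $\lambda$, $P_\lambda$, $L_\lambda$, and the conjugating unipotent element taken $k$-defined: the forward direction uses the $k$-rational Hilbert--Mumford theorem to produce a $k$-defined destabilizing $\lambda$; the converse requires the conjugating $u$ to lie in $R_u(P_\lambda)(k)$, which is obtained by applying the cocharacter-closed hypothesis along $\lambda$ itself and observing that $R_u(P_\lambda)(k)\cdot \textbf{h}$ already meets $L_\lambda^n$ in $\textbf{h}'$.

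The main obstacle, shared by both equivalences, is the assertion that when $\textbf{h}' = \lim \lambda(t)\cdot \textbf{h}$ lies in $G\cdot \textbf{h}$, the witnessing group element can in fact be taken inside $R_u(P_\lambda)$ rather than being a general element of $G$. This is where the structure theory of reductive groups enters in earnest: the map $c_\lambda : P_\lambda \to L_\lambda$, $g \mapsto \lim_{t\to 0}\lambda(t) g \lambda(t)^{-1}$, is a homomorphism with kernel $R_u(P_\lambda)$, and one must show that its restriction to the orbit $P_\lambda\cdot \textbf{h}$ realises $\textbf{h}'$ inside the $R_u(P_\lambda)$-orbit of $\textbf{h}$. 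The standard route compares $G\cdot\textbf{h} \cap L_\lambda^n$ with the necessarily closed orbit $L_\lambda\cdot\textbf{h}'$ in $L_\lambda^n$ and runs a dimension and fibration argument exploiting reductivity of $L_\lambda$; the rational refinement then requires the $k$-rational Hilbert--Mumford machinery to be applied throughout to guarantee $k$-defined witnesses at each stage.
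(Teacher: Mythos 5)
Your skeleton is the right one, and it is essentially the argument behind the results the paper quotes (the paper itself gives no proof: it cites \cite[Prop.~2.16, Thm.~3.1]{Bate-geometric-Inventione} and \cite[Thm.~9.3]{Bate-cocharacter-Arx}). The dictionary ``$H\le P_\lambda$ iff $\lim_{t\to 0}\lambda(t)\cdot\textbf{h}$ exists, $H\le L_\lambda$ iff $\lambda$ fixes $\textbf{h}$'' and the trick $\lambda(t)u^{-1}\lambda(t)^{-1}\to 1$ correctly dispose of the directions ``$G$-cr $\Rightarrow$ closed orbit'' and ``$G$-cr over $k$ $\Rightarrow$ cocharacter closed over $k$'' (for the latter, what you actually need is not a rational Hilbert--Mumford theorem to ``produce'' $\lambda$ --- cocharacter closedness is by definition tested against $k$-defined cocharacters --- but the fact that $k$-defined $R$-Levi subgroups of a $k$-defined $R$-parabolic are $R_u(P_\lambda)(k)$-conjugate, i.e.\ \cite[Lem.~2.5(iii)]{Bate-uniform-TransAMS}, which the paper quotes).

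The genuine gap is exactly the step you flag and then wave at: if $\textbf{h}'=\lim_{t\to 0}\lambda(t)\cdot\textbf{h}$ lies in $G\cdot\textbf{h}$ (resp.\ is $G(k)$-conjugate to $\textbf{h}$), then the witness can be taken in $R_u(P_\lambda)$ (resp.\ $R_u(P_\lambda)(k)$). This is not obtained by a ``dimension and fibration argument exploiting reductivity of $L_\lambda$'': over $\overline k$ it is \cite[Thm.~3.3]{Bate-uniform-TransAMS} (Proposition~\ref{perfectconjugacy} applied to the perfect field $\overline k$), whose proof goes through Kempf--Rousseau--Hesselink optimality, and over $k$ it is precisely the rational Hilbert--Mumford theorem \cite[Cor.~5.1]{Bate-cocharacter-Arx} (Proposition~\ref{closureconjugacy}), which is the main theorem of that paper and requires the cocharacter-closedness hypothesis in an essential way. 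Your sentence ``observing that $R_u(P_\lambda)(k)\cdot\textbf{h}$ already meets $L_\lambda^n$ in $\textbf{h}'$'' is the assertion to be proved, so as written the rational converse is circular; the delicacy is underlined by the fact that the analogous conjugacy refinement over nonperfect $k$ without the cocharacter-closedness hypothesis is open (\cite[Question~7.8]{Bate-cocharacter-Arx}, cf.\ the remark after Proposition~\ref{perfectconjugacy}). If you instead invoke Propositions~\ref{perfectconjugacy} and~\ref{closureconjugacy} at this point, your argument closes and reproduces the cited proofs; proving those two inputs from scratch is a substantially harder task than the sketch suggests.
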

The definition of a cocharacter closed orbit is given in the next section (Definition~\ref{cocharacterclosure}). Using Proposition~\ref{geometric} and various techniques from GIT we can sometimes generalize results on complete reducibility (over $k$) to obtain new results on GIT where $G$ (or $G(k)$) acts on an arbitrary affine $G$-variety rather than on some tuple of $G$; see~\cite{Bate-uniform-TransAMS},~\cite{Bate-cocharacter-Arx} for example. We follow the same philosophy here and generalize Theorems~\ref{Prop1} and~\ref{Prop2}.

\begin{thm}\label{GIT1}
Let $k$ be nonperfect. Suppose that there exists $v\in V$ such that $G\cdot v$ is Zariski closed but $G(k)\cdot v$ is not cocharacter closed over $k$. Let $\Delta_{v,k}$ be the set of $k$-cocharacters of $G$ destabilizing $v$ over $k$. Pick $\lambda\in \Delta_{v,k}$ such that $P_\lambda$ is minimal among $R$-parabolic subgroups $P_\mu$ for $\mu\in \Delta_{v,k}$. Then there exists a unipotent element $u\in R_u(P_\lambda)(\overline k)$ such that $G(k)\cdot (u\cdot v)$ is cocharacter closed over $k$.
\end{thm}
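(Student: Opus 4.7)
The plan is to translate the proof of Theorem~\ref{Prop1} into the GIT framework, using Proposition~\ref{geometric} as the dictionary. Set $v':=\lim_{a\to 0}\lambda(a)\cdot v$; this limit exists and lies in $V(k)\cap V^\lambda$. Since $G\cdot v$ is Zariski closed and $v'\in\overline{G\cdot v}$, we have $v'\in G(\overline k)\cdot v$. The first key step is to upgrade this to an equation $u\cdot v = v'$ for some $u\in R_u(P_\lambda)(\overline k)$. This is the affine-$G$-variety analog of the subgroup-level fact (used implicitly in Theorem~\ref{Prop1}) that any two $R$-Levi subgroups of $P_\lambda$ are $R_u(P_\lambda)(\overline k)$-conjugate, and in the general GIT setting it follows from cocharacter-closure theory: when $G\cdot v$ is Zariski closed, the fiber of the retraction $c_\lambda\colon V\to V^\lambda$, $w\mapsto\lim_{a\to 0}\lambda(a)\cdot w$, over $v'$ meets $G\cdot v$ in a single $R_u(P_\lambda)(\overline k)$-orbit.

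With $u$ in hand, it remains to show that $G(k)\cdot v' = G(k)\cdot(u\cdot v)$ is cocharacter closed over $k$. I would argue by contradiction: if some $\mu\in X_*(G,k)$ satisfies $v'' := \lim_{a\to 0}\mu(a)\cdot v'\notin G(k)\cdot v'$, the goal is to build $\nu\in\Delta_{v,k}$ with $P_\nu\subsetneq P_\lambda$, contradicting the minimal choice of $\lambda$. A rational Kempf-type reduction, applied to the action of the $k$-reductive group $L_\lambda$ on $V^\lambda$ fixing $v'$, replaces $\mu$ inside its $L_\lambda(k)$-orbit of destabilizers of $v'$ so that $\mu\in X_*(L_\lambda,k)$; in particular $\mu$ and $\lambda$ commute. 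For $N$ sufficiently large, set $\nu_N(a):=\mu(a)\lambda(a^N)$. A standard valuation-type estimate gives $\lim_{a\to 0}\nu_N(a)\cdot v = v''$, and a weight-space comparison yields $P_{\nu_N}\subsetneq P_\lambda$ provided $\mu$ is not central in $L_\lambda$. A short extra argument shows $v''\notin G(k)\cdot v$, using that $v$ and $v'$ lie in disjoint $G(k)$-orbits inside the closed orbit $G\cdot v$ and that $v''$ is separated from $G(k)\cdot v'$ by hypothesis; therefore $\nu_N\in\Delta_{v,k}$, which is the contradiction sought.

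The principal obstacle is the first structural step producing $u\in R_u(P_\lambda)(\overline k)$ with $u\cdot v = v'$: in the setting of Theorem~\ref{Prop1} one interpolates between two concrete $R$-Levi subgroups of $P_\lambda$, but in the general GIT setting the corresponding statement is a genuine cocharacter-closure theorem whose proof over nonperfect $k$ requires the full cocharacter-closure machinery. A secondary difficulty is the reduction to $\mu\in X_*(L_\lambda,k)$, for which one needs a rational analog of Kempf's optimal-cocharacter construction valid over separably closed, nonperfect $k$; a more technical but tractable issue is verifying $v''\notin G(k)\cdot v$, which is needed to place $\nu_N$ inside $\Delta_{v,k}$.
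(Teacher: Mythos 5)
Your overall strategy is the same as the paper's: produce $u\in R_u(P_\lambda)(\overline k)$ with $u\cdot v=v':=\lim_{a\rightarrow 0}\lambda(a)\cdot v$, then show that failure of cocharacter closedness of $G(k)\cdot v'$ over $k$ would yield a $k$-defined cocharacter $\zeta$ destabilizing $v$ with $P_\zeta\subsetneq P_\lambda$, contradicting the minimal choice of $\lambda$. Two comments on where you deviate. First, the step you call the ``principal obstacle'' is in the paper a one-line quotation: since $\overline k$ is perfect and $v'\in G(\overline k)\cdot v$ by Zariski closedness of the orbit, Proposition~\ref{perfectconjugacy} (applied over $\overline k$) gives directly $v'=u\cdot v$ with $u\in R_u(P_\lambda)(\overline k)$; no new cocharacter-closure theorem has to be proved. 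Second, and more substantively, to move the destabilizing cocharacter into $L_\lambda$ you invoke a ``rational Kempf-type reduction'' over $k$; no such rational Kempf--Rousseau theory is available over nonperfect $k$ (its absence is precisely what makes this subject delicate), and it is not needed. The paper instead uses Levi ascent/descent for cocharacter closedness (the result behind Proposition~\ref{G-cr-L-cr}, together with Proposition~\ref{geometric}): since the $k$-cocharacter $\lambda$ fixes $v'$, if $G(k)\cdot v'$ is not cocharacter closed over $k$ then neither is $L_\lambda(k)\cdot v'$, which immediately supplies a $k$-defined $\mu\in Y_k(L_\lambda)$ properly destabilizing $v'$; its non-centrality in $L_\lambda$ (which you leave as a proviso) follows because $L_\lambda\cdot v'$ is closed, so a central $\mu$ whose limit exists must fix $v'$. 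From there your construction $\zeta=m\lambda+\mu$, $P_\zeta=P_\mu(L_\lambda)\ltimes R_u(P_\lambda)\subsetneq P_\lambda$, and the commuting-limits argument using $\lim_{a\rightarrow 0}\lambda(a)\cdot v'=\lim_{a\rightarrow 0}\lambda(a)\cdot v$ coincide with the paper's proof. Finally, your ``short extra argument'' that $v''\notin G(k)\cdot v$ is superfluous: by Definition~\ref{destabilizingcocharacter}, membership in $\Delta_{v,k}$ only requires that the limit of $v$ along the cocharacter exists, so the existence of $\lim_{a\rightarrow 0}\zeta(a)\cdot v$ already contradicts minimality.
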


\begin{thm}\label{GIT2}
Let $k$ be nonperfect. Suppose that there exists $v\in V$ such that $G(k)\cdot v$ is cocharacter closed over $k$ but $G\cdot v$ is not Zariski closed. Let $\Delta_{v,k}$ be the set of $k$-cocharacters of $G$ destabilizing $v$ over $k$. Pick $\lambda\in \Delta_{v,k}$ such that $P_\lambda$ is minimal among $R$-parabolic subgroups $P_\mu$ for $\mu\in \Delta_{v,k}$. Then
\begin{enumerate}
\item{There exists $\xi \in \Delta_{v,k}$ such that $P_\lambda=P_\xi$ and $\xi(\overline k^*)$ fixes $v$.}
\item{Any $k$-defined cocharacter of $L_{\xi}$ destabilizing $v$ over $k$ is central in $L_{\xi}$.}
\item{Moreover, there exists an element $\l\in L_{\xi}(\overline k)$ such that $G(k)\cdot(l\cdot v)$ is not cocharacter closed over $k$.}
\end{enumerate}
\end{thm}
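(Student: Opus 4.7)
The plan is to mirror the proof of Theorem~\ref{Prop2} in the affine GIT setting, translating the $G$-cr/$L$-cr dichotomy of Bate--Martin--Röhrle via Proposition~\ref{geometric} and using the rational Hilbert--Mumford theorem of Bate--Martin--Röhrle--Tange.

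For part (1), by the rational Hilbert--Mumford theorem for cocharacter-closed $G(k)$-orbits, the limit $v':=\lim_{a\to 0}\lambda(a)\cdot v$ lies in $R_u(P_\lambda)(k)\cdot v$. Choose $u\in R_u(P_\lambda)(k)$ with $u\cdot v=v'$ and define the $k$-cocharacter $\xi$ by $\xi(a):=u^{-1}\lambda(a)u$. Then $P_\xi=u^{-1}P_\lambda u=P_\lambda$ since $u\in R_u(P_\lambda)\subseteq P_\lambda$; and using $\lambda(\overline{k}^*)\cdot v'=v'$, one gets $\xi(a)\cdot v=u^{-1}\lambda(a)\cdot v'=u^{-1}\cdot v'=v$, so $\xi(\overline{k}^*)$ fixes $v$ pointwise and $\xi\in\Delta_{v,k}$.

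For part (2), suppose for contradiction that some $k$-cocharacter $\mu$ of $L_\xi$ destabilizes $v$ but is not central in $L_\xi$, so the $R$-parabolic subgroup $P_\mu(L_\xi)$ of $L_\xi$ associated to $\mu$ is proper. For $n\gg 0$ the $k$-cocharacter $\eta:=n\xi+\mu$ of $G$ (well-defined because $\mu$ commutes with $\xi$) satisfies $P_\eta=P_\mu(L_\xi)\cdot R_u(P_\xi)\subsetneq P_\xi$ by the standard weight computation. Since $\xi$ fixes $v$ pointwise, $\eta(a)\cdot v=\xi(a)^n\mu(a)\cdot v=\mu(a)\cdot v$, which destabilizes $v$; hence $\eta\in\Delta_{v,k}$. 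This contradicts the minimality of $P_\lambda=P_\xi$.

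For part (3), the main effort is to descend the instability first to $L_\xi$ and then to $k$. First, the closed-orbit dichotomy (the affine-variety analogue of the BMR $L$-cr$\Leftrightarrow G$-cr transfer underlying Theorem~\ref{Prop2}) says that for $v\in V^\xi$, the orbit $G\cdot v$ is Zariski closed in $V$ if and only if $L_\xi\cdot v$ is Zariski closed in $V^\xi$. Since $G\cdot v$ is not closed, $L_\xi\cdot v$ is not closed in $V^\xi$, and Kempf--Rousseau for the $L_\xi$-action on $V^\xi$ produces a $\overline{k}$-cocharacter $\nu$ of $L_\xi$ with $v'':=\lim_{a\to 0}\nu(a)\cdot v$ lying in the unique closed $L_\xi$-orbit of $\overline{L_\xi\cdot v}$. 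Applying the dichotomy to $v''$ gives $G\cdot v''$ closed, hence a proper closed $G$-orbit in $\overline{G\cdot v}$, so $v''\notin G\cdot v$. Since $L_\xi$ is $k$-split (inherited from the paper's standing separable-closure hypothesis on $k$), one can pick $l\in L_\xi(\overline{k})$ so that $l\nu(\overline{k}^*)l^{-1}$ lies in a fixed $k$-split maximal torus of $L_\xi$; then $\nu':=l\nu l^{-1}$ is a $k$-cocharacter of $L_\xi\subseteq G$. Now $\nu'(a)\cdot(l\cdot v)=l\nu(a)\cdot v$ has limit $l\cdot v''$, and $l\cdot v''\notin G\cdot v=G\cdot(l\cdot v)$ because $G\cdot v$ is $G$-stable. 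Therefore $G(k)\cdot(l\cdot v)$ is not cocharacter closed over $k$, as required.

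The main obstacle is part (3): locating the instability of $v$ inside the Levi $L_\xi$ and then transporting the resulting over-$\overline{k}$ destabilization to a genuinely $k$-defined one. The closed-orbit dichotomy handles the first task, while the $k$-splitness of $L_\xi$ (inherited from the paper's separable-closure hypothesis) enables the second via a single $L_\xi(\overline{k})$-conjugation.
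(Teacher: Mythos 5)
Your proposal is correct, and parts (1)–(2) follow essentially the paper's own route: Proposition~\ref{closureconjugacy} gives $u\in R_u(P_\lambda)(k)$ with $u\cdot v=\lim_{a\to 0}\lambda(a)\cdot v$, one sets $\xi:=u^{-1}\cdot\lambda$ (your direct check that $\xi$ fixes $v$ replaces the paper's citation of \cite[Lem.~2.12]{Bate-uniform-TransAMS}), and a non-central $k$-defined destabilizing cocharacter $\mu$ of $L_\xi$ would produce the $k$-defined $R$-parabolic subgroup $P_{n\xi+\mu}=P_\mu(L_\xi)\ltimes R_u(P_\xi)\subsetneq P_\lambda$ with $n\xi+\mu\in\Delta_{v,k}$, contradicting minimality. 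Part (3) is where you genuinely diverge. The paper chooses $\zeta\in Y(L_\xi)$ destabilizing $v$ with $P_\zeta(L_\xi)$ minimal, conjugates that \emph{parabolic} to a $k$-defined one by some $l\in L_\xi(\overline k)$, and then argues by contradiction: assuming $G(k)\cdot(l\cdot v)$ cocharacter closed over $k$, it invokes \cite[Lem.~2.12]{Bate-uniform-TransAMS}, a second Levi descent to the centralizer of the shifted cocharacter, and the semidirect-product construction again to violate that minimality. You instead run Hilbert--Mumford/Kempf inside $L_\xi$ to reach $v''$ in the closed $L_\xi$-orbit, apply the closed-orbit Levi-descent (the $S$-instability result underlying Proposition~\ref{G-cr-L-cr}, used at $v''$ as well as at $v$) to conclude $v''\notin G\cdot v$, and then use the standing hypothesis that $k$ is separably closed to conjugate the destabilizing \emph{cocharacter} itself into a $k$-split maximal torus of $L_\xi$; since every cocharacter of a split torus is $k$-defined, $\nu'=l\cdot\nu$ is a $k$-cocharacter whose limit $l\cdot v''$ lies outside $G\cdot(l\cdot v)$, hence is not $G(k)$-conjugate to $l\cdot v$. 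This is more direct — no minimality-based contradiction and no use of \cite[Lem.~2.12]{Bate-uniform-TransAMS} in this step — and it yields slightly more, namely that $l\cdot v$ is destabilized over $k$ with a limit leaving even the geometric $G$-orbit; the only extra explicit input is the geometric dichotomy at the auxiliary point $v''$, which is the same tool the paper already uses at $v$, so nothing beyond the paper's toolkit is needed.
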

Roughly speaking, we say that $v\in V$ is destabilized over $k$ by a $k$-cocharacter $\lambda$ of $G$ if $v\in V$ is taken outside of $G(k)\cdot v$ by taking a limit of $v$ along $\lambda$ in the sense of GIT~\cite{Kempf-instability-Ann},~\cite{Mumford-GIT-book}; see Definition~\ref{destabilizingcocharacter} for the precise definition. Note that if $k$ is perfect, Theorems~\ref{GIT1} and~\ref{GIT2} have no content: in that case $G(k)\cdot v$ is cocharacter closed if and only if $G\cdot v$ is Zariski closed~\cite[Cor.~7.2, Prop.~7.4]{Bate-cocharacter-Arx}.

To complement the paper we also investigate the structure of centralizers of completely reducible subgroups of $G$. In particular we ask~\cite[Open Problem~1.4]{Uchiyama-Nonperfectopenproblem-pre}:
\begin{oprob}\label{centprob}
Suppose that a $k$-subgroup $H$ of $G$ is $G$-cr over $k$. Is $C_G(H)$ also $G$-cr over $k$?
\end{oprob}
We have a partial answer~\cite[Thm.~1.5]{Uchiyama-Nonperfectopenproblem-pre}:
\begin{prop}\label{cent}
Let $G$ be connected. Suppose that a $k$-subgroup $H$ of $G$ is $G$-cr over $k$. 
If $C_G(H)$ is reductive, then it is $G$-cr over $k$.
\end{prop}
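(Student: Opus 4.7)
The approach is to reformulate the statement via the GIT characterization of Proposition~\ref{geometric} and to descend the well-known $\overline k$-statement to $k$ by leveraging the $G$-cr-over-$k$ hypothesis on $H$ through a combined-tuple argument. Write $M:=C_G(H)$; we need to show that whenever $M\subseteq P$ for some $k$-defined $R$-parabolic $P=P_\lambda$ (with $\lambda$ a $k$-cocharacter, which exists since $k$ is separably closed), there is a $k$-defined $R$-Levi of $P$ containing $M$. Since $M$ is reductive and $M\subseteq P_\lambda$, the Richardson limit map $c_\lambda:P_\lambda\to L_\lambda$ has trivial kernel on $M$ (the kernel $M\cap R_u(P_\lambda)$ is a unipotent normal subgroup of the reductive $M$, hence trivial), giving a $k$-defined closed embedding $c_\lambda|_M:M\hookrightarrow L_\lambda$. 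Over $\overline k$, standard Levi conjugacy then yields $u\in R_u(P_\lambda)(\overline k)$ with $u^{-1}Mu=c_\lambda(M)\subseteq L_\lambda$, so $M\subseteq uL_\lambda u^{-1}$; in particular $M$ is $G$-cr over $\overline k$ automatically. The content of the proposition is purely the $k$-rationality refinement.

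To descend the conjugating element to $k$, the plan is first to show $H\cdot M$ is $G$-cr over $k$, and then to invoke the $k$-analog of the BMR normal-reductive-subgroup theorem (a normal reductive subgroup of a $G$-cr-over-$k$ subgroup is itself $G$-cr over $k$; compare \cite{Bate-cocharacter-Arx}) applied to $M$, which is central, hence normal, in $H\cdot M$ by virtue of $[H,M]=1$. To establish $G$-cr-over-$k$ of $H\cdot M$, suppose $H\cdot M\subseteq P_\nu$ for some $k$-cocharacter $\nu$ of $G$. Then $H\subseteq P_\nu$, and the hypothesis that $H$ is $G$-cr over $k$ produces a $k$-cocharacter $\nu'$ with $P_{\nu'}=P_\nu$ and $H\subseteq L_{\nu'}$. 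Since $L_{\nu'}=C_G(\nu'(\mathbb{G}_m))$, this forces $\nu'(\mathbb{G}_m)\subseteq C_G(H)=M$. The task then reduces to modifying $\nu'$ to a $k$-cocharacter $\nu''$ of $G$ whose image lies in the connected centre $Z(M)^\circ$ (a $k$-split $k$-torus, since $k$ is separably closed) and which still satisfies $P_{\nu''}=P_\nu$; such a $\nu''$ has $M\subseteq L_{\nu''}$ and therefore places $H\cdot M$ inside the $k$-defined $R$-Levi $L_{\nu''}$, as required.

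The main obstacle is the final cocharacter-modification step, in which $\nu'$ (whose image already lies in $M$) must be replaced by a $k$-cocharacter $\nu''$ with image in $Z(M)^\circ$ and identical associated $R$-parabolic $P_\nu$. A candidate is obtained by decomposing $\nu'\in Y_k(M)$ using the almost direct product $M=Z(M)^\circ\cdot [M,M]$ into a central part $\nu_c\in Y_k(Z(M)^\circ)$ and a semisimple remainder, and then scaling the central part by a sufficiently large positive integer so that the sign pattern $\langle\alpha,\nu''\rangle$ agrees with $\langle\alpha,\nu'\rangle$ on all roots $\alpha$ of $G$ with $\langle\alpha,\nu_c\rangle\neq 0$. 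The delicate part is controlling the roots on which $\nu_c$ vanishes: here the commutation $[H,M]=1$ together with $H\subseteq L_{\nu'}$ forces such roots to lie in the centralizer root-datum of $H$, and the $G$-cr-over-$k$ hypothesis on $H$ is invoked (through a careful $R_u(P_\nu)(k)$-compensation) to guarantee that a $k$-rational choice of $\nu''$ preserving $P_{\nu''}=P_\nu$ exists. Without the $G$-cr-over-$k$ hypothesis on $H$, the rationality step fails in exactly the manner exhibited by the nonseparable examples of Theorem~\ref{D4example} and the obstructions recalled in the introduction.
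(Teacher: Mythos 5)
Your opening reduction contains a false step: from the reductivity of $M:=C_G(H)$ and $M\subseteq P_\lambda$ you cannot conclude that some $u\in R_u(P_\lambda)(\overline k)$ conjugates $M$ onto $c_\lambda(M)\subseteq L_\lambda$, nor that $M$ is ``automatically'' $G$-cr over $\overline k$. Injectivity of $c_\lambda|_M$ (and even that needs care, since a finite unipotent normal subgroup of a non-connected reductive $M$ need not vanish) does not produce such a $u$; the obstruction is precisely the failure of complete reducibility in small characteristic, where reductive subgroups of parabolics that lie in no Levi do exist. Moreover $H$ is only assumed $G$-cr over $k$, not over $\overline k$, so you cannot fall back on \cite[Cor.~3.17]{Bate-geometric-Inventione} either. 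This claim is not load-bearing for your later steps, but it mis-frames the proposition as a purely formal rationality refinement of a trivial $\overline k$-statement, which it is not.

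The genuine gaps are in the two pillars of your plan. First, the central-cocharacter modification -- producing a $k$-cocharacter $\nu''$ with image in $Z(M)$ (equivalently, centralizing $HC_G(H)$) and with $P_{\nu''}=P_\nu$ -- is exactly the hard point, and ``scaling the central part'' plus ``a careful $R_u(P_\nu)(k)$-compensation'' is an IOU rather than an argument: nothing controls the roots on which $\nu_c$ vanishes or changes sign, and the existence of such a central $k$-cocharacter is essentially equivalent to what is being proved, so the reduction is circular. There is also a rationality defect in the decomposition itself: $C_G(H)$, hence $Z(M)^{\circ}$, need not be $k$-defined even though $H$ is (the introduction stresses this), so ``$\nu_c\in Y_k(Z(M)^\circ)$'' and ``$Z(M)^\circ$ is a $k$-split $k$-torus'' are unjustified. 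Second, the $k$-analogue over nonperfect $k$ of the normal-subgroup theorem is invoked with only ``compare \cite{Bate-cocharacter-Arx}''; this is itself a delicate rationality statement, and you would be applying it to $M\trianglelefteq HC_G(H)$, subgroups which again need not be $k$-defined, so it cannot be used as a black box without a precise statement and hypotheses. For comparison, the paper does not reprove Proposition~\ref{cent} here: it quotes \cite[Thm.~1.5]{Uchiyama-Nonperfectopenproblem-pre}, whose proof uses the center conjecture of Tits (this is why $G$ must be connected) to obtain a $k$-parabolic subgroup containing $HC_G(H)$, and then a GIT argument in the spirit of \cite[Cor.~3.17]{Bate-geometric-Inventione}, where the reductivity of $C_G(H)$ is what makes the GIT tool applicable. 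Your sketch uses neither the center conjecture nor, anywhere, the connectedness of $G$, which is a further sign that the outline is missing the actual mechanism rather than replacing it.
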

We need the connectedness assumption in Proposition~\ref{cent} since it depends on the center conjecture of Tits. If $k=\overline k$ (or more generally if $k$ is perfect), the answer to Open Problem~\ref{centprob} is ``yes'' by~\cite[Cor.~3.17]{Bate-geometric-Inventione} (and Proposition~\ref{paris}). Some trouble arises for nonperfect $k$ since $C_G(H)$ is not necessarily reductive even if $H$ is $G$-cr over $k$~\cite[Rem.~3.11]{Uchiyama-Nonperfect-pre}. This does not happen if $k=\overline k$ by~\cite[Prop.~3.12]{Bate-geometric-Inventione}, which in turn depends on a deep result of Richardson~\cite[Thm.~A]{Richardson-coset-BLM}. The reductivity of $C_G(H)$ was crucial in the proof of~\cite[Cor.~3.17]{Bate-geometric-Inventione} to apply a tool from GIT. 

In general, if $k$ is nonperfect, even if a $k$-subgroup of $G$ is $G$-cr over $k$, it is not necessarily reductive ~\cite[Prop.~1.10]{Uchiyama-Nonperfect-pre}. This pathology happens because the classical construction of Borel-Tits~\cite[Prop.~3.1]{Borel-Tits-unipotent-invent} fails over nonperfect $k$; see~\cite[Sec.~3.2]{Uchiyama-Nonperfect-pre}. This does not happen if $k=\overline k$; a $G$-cr subgroup is always reductive~\cite[Prop.~4.1]{Serre-building}. 

Here is our third main result in this paper. Let $G$ be connected. Fix a maximal $k$-torus $T$ of $G$. We write $\overline{w_{0,G}}$ for an automorphism of $G$ which normalizes $T$ and induces $-1$ on $\Psi^{+}(G)$ (the set of positive roots of $G$). It is known that $\overline{w_{0,G}}=w_{0,G}$ for $G$ of not type $A_n$, $D_{2n+1}$, or $E_6$, and $\overline{w_{0,G}}=w_{0,G} \sigma_G$ for $G$ of type $A_n (n>1)$, $D_{2n+1}$, or $E_6$ where $w_{0,G}$ is the longest element of the Weyl group of $G$ and $\sigma_G$ is a suitable graph automorphism of $G$ (cf.~\cite[Proof of Thm.~4.1]{Liebeck-Seitz-memoir}).

\begin{thm}\label{main}
Let $G$ be connected. Suppose that a semisimple $k$-subgroup $H$ of $G$ is $G$-cr over $k$. Let $P$ be minimal among $k$-parabolic subgroups containing $HC_G(H)$, and $L$ be a $k$-Levi subgroup of $P$. If the automorphism $\overline{w_{0,L}}$ of $L$ extends to an automorphism of $G$ and $\overline{w_{0,L}}$ stabilizes $R_u(P)$ (in particular if $L$ is not of type $A_n$, $D_{2n+1}$, or $E_6$), then $C_G(H)$ is $G$-cr over $k$. 
\end{thm}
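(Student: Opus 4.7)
The strategy is to show that $C_G(H)$ is actually contained in $L$, whence $C_G(H) = C_L(H) := C_G(H) \cap L$, and then to deduce $G$-cr over $k$ from $L$-cr over $k$ via transitivity of complete reducibility through $k$-defined Levi subgroups.

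First, since $H$ is $G$-cr over $k$ and contained in $P$, $H$ lies in some $k$-defined $R$-Levi of $P$; as all such Levis are $R_u(P)(k)$-conjugate, after replacing $H$ by an $R_u(P)(k)$-conjugate we may assume $H \subset L$ (this preserves $HC_G(H) \subset P$, the minimality of $P$, and the hypothesis on $\overline{w_{0,L}}$). The projection $\pi \colon P \to L$ sends $HC_G(H)$ onto $H \cdot C_L(H)$, so the minimality of $P$ forces $H \cdot C_L(H)$ to be $L$-irreducible over $k$, hence $L$-cr over $k$. Applying the standard principle that normal subgroups of $L$-cr over $k$ subgroups are again $L$-cr over $k$ (valid for connected reductive $L$), to $C_L(H) \triangleleft H \cdot C_L(H)$, we obtain that $C_L(H)$ is $L$-cr over $k$.

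The heart of the proof is the claim $C_G(H) \subset L$. Here the extendability of $\overline{w_{0,L}}$ is essential: one constructs an automorphism $\eta$ of $G$ with $\eta(H) = H$ and $\eta(P) = P^{-}$, where $P^{-}$ is the opposite $k$-parabolic with Levi $L$. Concretely, let $\tilde\tau$ be an extension of $\overline{w_{0,L}}$ to $G$, and form $\eta := \overline{w_{0,G}} \circ \tilde\tau^{-1}$. Since both $\overline{w_{0,G}}|_L$ and $\overline{w_{0,L}}$ invert $\Psi^{+}(L)$ (so differ by an inner automorphism of $L$, which lifts to an inner automorphism of $G$), we can adjust so that $\eta|_L = \mathrm{id}_L$, and analyzing the action on the central cocharacter $\lambda \in Y(Z(L)^\circ)$ defining $P$ shows $\eta(\lambda) = -\lambda$, so $\eta(P) = P^{-}$. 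Since $H \subset L$, $\eta(H) = H$, whence $\eta(C_G(H)) = C_G(\eta(H)) = C_G(H)$. Applying $\eta$ to $HC_G(H) \subset P$ gives $HC_G(H) = \eta(HC_G(H)) \subset \eta(P) = P^{-}$; combined with $HC_G(H) \subset P$, this forces $HC_G(H) \subset P \cap P^{-} = L$, so $C_G(H) = C_L(H)$. Transitivity of complete reducibility through $k$-defined Levi subgroups (a standard tool, from~\cite{Bate-cocharacter-Arx}) then promotes $L$-cr over $k$ of $C_L(H)$ to $G$-cr over $k$.

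The main obstacle is the construction of $\eta$, especially in the exceptional case where $L$ has a component of type $A_n$, $D_{2n+1}$, or $E_6$. In the generic case $\overline{w_{0,L}} = w_{0,L}$ is represented by an element $\dot{w} \in N_L(T) \subset N_G(T)$, and one may take $\tilde\tau := \mathrm{inn}(\dot{w})$; since $\dot{w}$ fixes $Z(L)^\circ$ pointwise while a representative of $w_{0,G}$ inverts it (for $G$ not of exceptional type), a short Weyl-group computation shows $\eta$ fixes $L$ pointwise and sends $P_\lambda$ to $P_{-\lambda} = P^{-}$, as desired. In the exceptional case, the graph-automorphism piece $\sigma_L$ of $\overline{w_{0,L}}$ acts nontrivially on $Z(L)^\circ$, which interferes with the cocharacter analysis, and in fact no automorphism of $G$ can simultaneously fix $L$ pointwise and invert $\lambda$; the extendability hypothesis must then be used to arrange $\eta$ to normalize (rather than fix pointwise) $L$ in a way compatible with $\eta(H) = H$ and $\eta(P) = P^{-}$. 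The delicate bookkeeping of the actions on root data and on the central cocharacter defining $P$ is the technical core of the argument.
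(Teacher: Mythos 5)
Your overall strategy is the paper's: build an automorphism out of $\overline{w_{0,G}}$ and (the extension of) $\overline{w_{0,L}}$ that fixes $H$, hence $C_G(H)$, while exchanging $P$ with its opposite, force $HC_G(H)\subseteq L$, and finish inside $L$ using minimality of $P$. But your construction of $\eta$ is incoherent as stated: you demand both $\eta|_L=\mathrm{id}_L$ and $\eta(P)=P^{-}$. Writing $P=P_\lambda$ with $\lambda$ a cocharacter of $Z(L)^{\circ}$, any automorphism fixing $L$ pointwise fixes $\lambda$, so $\eta(P_\lambda)=P_{\eta\circ\lambda}=P_\lambda$; thus no such $\eta$ exists whenever $P\neq G$. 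You notice this obstruction only in the exceptional case, but it is equally present in your ``generic case''. What one can arrange --- and what the paper's proof actually uses --- is weaker and suffices: $w:=\overline{w_{0,L}}\circ\overline{w_{0,G}}$ fixes every root of $L$ and hence centralizes $[L,L]$, which contains $H$ because $H$ is semisimple (this is exactly where that hypothesis enters), while it carries $R_u(P_\lambda)$ to $R_u(P_{-\lambda})$; this already yields $w(H)=H$, $w(C_G(H))=C_G(H)$ and $C_G(H)\cap R_u(P)=1$, i.e.\ $HC_G(H)\subseteq L$. Moreover, in the exceptional case --- the only case in which the extendability hypothesis has any content --- you leave the construction as ``delicate bookkeeping''; and ``arrange $\eta$ to normalize $L$ compatibly with $\eta(H)=H$'' is not a proof, since for an arbitrary semisimple $H\le L$ normalizing $L$ gives no control on $H$: you need the automorphism to centralize $[L,L]$ (or at least fix $H$), which is precisely what the composite with the extended $\overline{w_{0,L}}$ achieves uniformly in the paper. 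So the technical core of the theorem is missing from your write-up.

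The second gap is your endgame. After obtaining that $HC_L(H)$ is $L$-ir over $k$ (that step, via $\pi^{-1}$ of a proper $k$-parabolic of $L$ and minimality of $P$, is fine), you invoke ``the standard principle that normal subgroups of cr-over-$k$ subgroups are cr over $k$'' for $C_L(H)\trianglelefteq HC_L(H)$. Over $\overline k$ this is standard (cf.\ the use of~\cite{Bate-uniform-TransAMS} in Section 4), but over nonperfect $k$ it is not an elementary fact: if available it comes from the rational Hilbert--Mumford/cocharacter-closure machinery of~\cite{Bate-cocharacter-Arx}, and you would have to check the precise hypotheses, noting that $C_L(H)$ and $HC_L(H)$ need not be $k$-defined (centralizers of $k$-subgroups can fail to be $k$-defined over nonperfect fields, as this paper emphasizes). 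The paper avoids this entirely: it argues by contradiction, using~\cite[Prop.~3.3]{Uchiyama-Nonperfect-pre} (which rests on the center conjecture) to produce, when $C_G(H)$ is not $G$-cr over $k$, a proper $k$-parabolic of $L$ containing $HC_G(H)$, and then contradicts the minimality of $P$ via $P_L\ltimes R_u(P)$. Either cite a precise rational normal-subgroup-descent statement and verify it applies to these possibly non-$k$-defined subgroups, or replace this step by the paper's Prop.~3.3 argument; with those two repairs (and the corrected automorphism), your reduction of $C_G(H)$ into $L$ and the final appeal to Proposition~\ref{G-cr-L-cr} would give a proof essentially identical to the paper's.
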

Note that we give a discussion of the conditions of this theorem (automorphisms extending) in Remark~\ref{remOnAssumptions}.

Here is the structure of the paper. In Section 2, we set out the notation and show some preliminary results. Then in Section 3, we prove our first main result (Theorems~\ref{Prop1} and~\ref{Prop2}). In Section 4, we present the $D_4$ example~(Theorem~\ref{D4example}). In Section 5, we generalize Theorems~\ref{Prop1} and~\ref{Prop2} and prove our second main result (Theorems~\ref{GIT1} and~\ref{GIT2}). In Section 6, we translate Theorems~\ref{Prop1} and~\ref{Prop2} into the language of spherical buildings, and prove Propositions~\ref{building1} and~\ref{building2}. This gives a topological perspective for the rationality problems for complete reducibility and GIT. 
Then in Section 7, we attack Open Problem~\ref{centprob}, and prove Theorem~\ref{main} in a purely combinatorial way. In Section 8, we consider a problem on the number of conjugacy classes and prove Theorem~\ref{conjugacy}. We note that nonseparability comes into play in a crucial way in the proof of Theorem~\ref{conjugacy}.

\section{Preliminaries}
Throughout, we denote by $k$ a separably closed field. Our references for algebraic groups are~\cite{Borel-AG-book},~\cite{Borel-Tits-Groupes-reductifs},~\cite{Conrad-pred-book},~\cite{Humphreys-book1}, and~\cite{Springer-book}. 

Let $H$ be a (possibly non-connected) affine algebraic group. We write $H^{\circ}$ for the identity component of $H$. It is clear that if $H$ is $k$-defined, $H^{\circ}$ is $k$-defined. We write $[H,H]$ for the derived group of $H$. A reductive group $G$ is called \emph{simple} as an algebraic group if $G$ is connected and all proper normal subgroups of $G$ are finite. We write $X_k(G)$ and $Y_k(G)$ ($X(G)$ and $Y(G)$) for the set of $k$-characters and $k$-cocharacters ($\overline k$-characters and $\overline k$-cocharacters) of $G$ respectively. For $\overline k$-characters and $\overline k$-cocharacters $G$ we simply say characters and cocharacters of $G$. 

Fix a maximal $k$-torus $T$ of $G$ (such a $T$ exists by~\cite[Cor.~18.8]{Borel-AG-book}). Then $T$ splits over $k$ since $k$ is separably closed. Let $\Psi(G,T)$ denote the set of roots of $G$ with respect to $T$. We sometimes write $\Psi(G)$ for $\Psi(G,T)$. Let $\zeta\in\Psi(G)$. We write $U_\zeta$ for the corresponding root subgroup of $G$. We define $G_\zeta := \langle U_\zeta, U_{-\zeta} \rangle$. Let $\zeta, \xi \in \Psi(G)$. Let $\xi^{\vee}$ be the coroot corresponding to $\xi$. Then $\zeta\circ\xi^{\vee}:\overline k^{*}\rightarrow \overline k^{*}$ is a $k$-homomorphism hence $(\zeta\circ\xi^{\vee})(a) = a^n$ for some $n\in\mathbb{Z}$.
Let $s_\xi$ denote the reflection corresponding to $\xi$ in the Weyl group of $G$. Each $s_\xi$ acts on the set of roots $\Psi(G)$ by the following formula~\cite[Lem.~7.1.8]{Springer-book}:
$
s_\xi\cdot\zeta = \zeta - \langle \zeta, \xi^{\vee} \rangle \xi. 
$
\noindent By \cite[Prop.~6.4.2, Lem.~7.2.1]{Carter-simple-book} we can choose $k$-homomorphisms $\epsilon_\zeta : \overline k \rightarrow U_\zeta$  so that 
$
n_\xi \epsilon_\zeta(a) n_\xi^{-1}= \epsilon_{s_\xi\cdot\zeta}(\pm a)
            \text{ where } n_\xi = \epsilon_\xi(1)\epsilon_{-\xi}(-1)\epsilon_{\xi}(1).  \label{n-action on group}
$

We recall the notions of $R$-parabolic subgroups and $R$-Levi subgroups from~\cite[Sec.~2.1--2.3]{Richardson-conjugacy-Duke}. These notions are essential to define $G$-complete reducibility for subgroups of non-connected $G$ and also to translate results on complete reducibility into the language of GIT; see~\cite{Bate-nonconnected-PAMS} and~\cite[Sec.~6]{Bate-geometric-Inventione}. 

\begin{defn}
Let $X$ be a affine $k$-variety. Let $\phi : \overline k^*\rightarrow X$ be a $k$-morphism of affine $k$-varieties. We say that $\displaystyle\lim_{a\rightarrow 0}\phi(a)$ exists if there exists a $k$-morphism $\hat\phi:\overline k\rightarrow X$ (necessarily unique) whose restriction to $\overline k^{*}$ is $\phi$. If this limit exists, we set $\displaystyle\lim_{a\rightarrow 0}\phi(a) = \hat\phi(0)$.
\end{defn}

\begin{defn}\label{R-parabolic}
Let $\lambda\in Y(G)$. Define
$
P_\lambda := \{ g\in G \mid \displaystyle\lim_{a\rightarrow 0}\lambda(a)g\lambda(a)^{-1} \text{ exists}\}, $\\
$L_\lambda := \{ g\in G \mid \displaystyle\lim_{a\rightarrow 0}\lambda(a)g\lambda(a)^{-1} = g\}, \,
R_u(P_\lambda) := \{ g\in G \mid  \displaystyle\lim_{a\rightarrow0}\lambda(a)g\lambda(a)^{-1} = 1\}. 
$
\end{defn}
We call $P_\lambda$ an $R$-parabolic subgroup of $G$, and $L_\lambda$ an $R$-Levi subgroup of $P_\lambda$. Note that $R_u(P_\lambda)$ is the unipotent radical of $P_\lambda$. If $\lambda$ is $k$-defined, $P_\lambda$, $L_\lambda$, and $R_u(P_\lambda)$ are $k$-defined~\cite[Sec.~2.1-2.3]{Richardson-conjugacy-Duke}. All $k$-defined parabolic subgroups and $k$-defined Levi subgroups of $G$ arise in this way since $k$ is separably closed. It is well known that $L_\lambda = C_G(\lambda(\overline k^*))$. Note that $k$-defined $R$-Levi subgroups of a $k$-defined $R$-parabolic subgroup $P$ of $G$ are $R_u(P)(k)$-conjugate~\cite[Lem.~2.5(\rmnum{3})]{Bate-uniform-TransAMS}. Let $M$ be a reductive $k$-subgroup of $G$. Then, there is a natural inclusion $Y_k(M)\subseteq Y_k(G)$ of $k$-cocharacter groups. Let $\lambda\in Y_k(M)$. We write $P_\lambda(G)$ or just $P_\lambda$ for the $R$-parabolic subgroup of $G$ corresponding to $\lambda$, and $P_\lambda(M)$ for the $R$-parabolic subgroup of $M$ corresponding to $\lambda$. It is clear that $P_\lambda(M) = P_\lambda(G)\cap M$ and $R_u(P_\lambda(M)) = R_u(P_\lambda(G))\cap M$. If $G$ is connected, $R$-parabolic subgroups and $R$-Levi subgroups are parabolic subgroups and Levi subgroups in the usual sense~\cite[Prop.~8.4.5]{Springer-book}.

The next result is used repeatedly to reduce problems on $G$-complete reducibility to those on $L$-complete reducibility where $L$ is an $R$-Levi subgroup of $G$. 

\begin{prop}\label{G-cr-L-cr}
Suppose that a subgroup $H$ of $G$ is contained in a $k$-defined $R$-Levi subgroup $L$ of $G$. Then $H$ is $G$-cr over $k$ if and only if it is $L$-cr over $k$. 
\end{prop}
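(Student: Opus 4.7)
The plan is to exploit the fact that $L = L_\lambda$ for some $\lambda\in Y_k(G)$ (which holds since $k$ is separably closed, so every $k$-defined $R$-Levi of $G$ arises this way) and prove the two implications by cocharacter manipulation.

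For the forward direction ($G$-cr over $k$ $\Rightarrow$ $L$-cr over $k$), given $\mu\in Y_k(L)$ with $H\subseteq P_\mu(L)$, I would introduce the $k$-cocharacter $\tau\in Y_k(G)$ defined by $\tau(t):=\lambda(t)^n\mu(t)$ for $n\gg 0$ (well-defined because $\mu\in Y(L_\lambda)$ commutes with $\lambda$). A routine root-space calculation gives, for $n$ large enough,
\[
P_\tau(G) = P_\mu(L)\cdot R_u(P_\lambda(G)),\quad L_\tau(G) = L_\mu(L),\quad R_u(P_\tau(G)) = R_u(P_\mu(L))\cdot R_u(P_\lambda(G)),
\]
with $R_u(P_\lambda(G))$ normal in $R_u(P_\tau(G))$, so $R_u(P_\tau(G))(k) = R_u(P_\mu(L))(k)\cdot R_u(P_\lambda(G))(k)$. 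Since $H$ is $G$-cr over $k$, there exists $u\in R_u(P_\tau(G))(k)$ with $u^{-1}Hu\subseteq L_\tau(G)=L_\mu(L)$; write $u=vw$ with $v\in R_u(P_\mu(L))(k)$ and $w\in R_u(P_\lambda(G))(k)$. A $\lambda$-limit trick then closes the argument: for $h\in H\subseteq L_\lambda$, both $h$ and $u^{-1}hu\in L_\mu(L)\subseteq L_\lambda$ are $\lambda$-fixed, whence $(\lambda(t)u^{-1}\lambda(t)^{-1})\,h\,(\lambda(t)u\lambda(t)^{-1})=u^{-1}hu$ for every $t\in\overline{k}^*$; letting $t\to 0$ and noting $\lambda(t)u\lambda(t)^{-1}\to v$ (since $v$ is $\lambda$-fixed while $\lambda(t)w\lambda(t)^{-1}\to 1$), we conclude $v^{-1}Hv\subseteq L_\mu(L)$, exhibiting the $k$-defined $R$-Levi $vL_\mu(L)v^{-1}$ of $P_\mu(L)$ containing $H$.

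For the backward direction, given $\mu\in Y_k(G)$ with $H\subseteq P_\mu(G)$, the trouble is that $\mu$ need not commute with $\lambda$, so the decomposition above is unavailable. My approach would be to pass to the geometric characterization (Proposition~\ref{geometric}): fixing a generating tuple $\textbf{h}$ for $H$, $L$-cr over $k$ (respectively, $G$-cr over $k$) is equivalent to cocharacter-closedness of $L(k)\cdot\textbf{h}$ in $L^n$ (respectively, $G(k)\cdot\textbf{h}$ in $G^n$), and the passage from $L(k)$-cocharacter-closedness to $G(k)$-cocharacter-closedness for tuples lying in $L^n$ is a standard instance of the rational Hilbert--Mumford machinery for Levi inclusions developed by Bate--Martin--R\"ohrle (and Tange): any hypothetical $k$-cocharacter of $G$ destabilizing $\textbf{h}$ can be replaced, via a $\lambda$-projection, by a $k$-cocharacter of $L$ with the same limit point, contradicting $L$-closedness.

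The main obstacle is precisely the backward direction: implementing the $\lambda$-projection of an arbitrary $\mu\in Y_k(G)$ to a cocharacter in $Y_k(L)$ preserving the limit $\lim_{t\to 0}\mu(t)\cdot\textbf{h}$ is exactly where the nontrivial cocharacter-closed orbit machinery is needed, whereas the forward direction and its root-datum bookkeeping are mechanical once the identification $L=L_\lambda$ is fixed.
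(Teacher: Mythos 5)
Your proposal is correct, but it only partially coincides with the paper's route: the paper disposes of both directions in one line, by combining Proposition~\ref{geometric} with the Levi ascent/descent theorem of Bate--Martin--R\"ohrle--Tange~\cite[Thm.~5.4(\rmnum{2})]{Bate-cocharacter-Arx} applied to the tuple $\mathbf{h}\in (G^n)^{\lambda(\overline k^*)}=L^n$. Your forward direction is instead a self-contained group-theoretic argument: the construction $\tau=n\lambda+\mu$ with $P_\tau=P_\mu(L)\ltimes R_u(P_\lambda)$, $L_\tau=L_\mu(L)$ for $n\gg 0$ is exactly the standard device (cf.\ \cite[Lem.~6.2]{Bate-geometric-Inventione}, used again in this paper's proofs of Theorems~\ref{Prop1},~\ref{Prop2} and~\ref{GIT1}), the $k$-point factorization $u=vw$ through the semidirect product and the $\lambda$-limit computation $\lambda(t)u\lambda(t)^{-1}\to v$ are sound, and the conclusion $H\subseteq v\cdot L_\mu(L)$ with $v\in R_u(P_\mu(L))(k)$ is what is needed; this buys transparency and shows concretely where the $R_u(P_\tau)(k)$-conjugacy of $k$-defined $R$-Levi subgroups enters. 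For the backward direction you correctly diagnose that the argument cannot proceed by commuting cocharacters and you fall back on the same external input the paper itself uses (the rational Hilbert--Mumford/Levi-ascent machinery via the geometric characterization), so there is no gap, only a division of labour: the paper cites~\cite[Thm.~5.4(\rmnum{2})]{Bate-cocharacter-Arx} for both implications, whereas you reprove the easier (descent) half by hand and cite the hard (ascent) half, which is indeed where the genuinely non-elementary content lies.
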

\begin{proof}
This follows from Proposition~\ref{geometric} and~\cite[Thm.~5.4(\rmnum{2})]{Bate-cocharacter-Arx}. 
\end{proof}

The next result shows how complete reducibility behaves under central isogenies. 
\begin{defn}
Let $G_1$ and $G_2$ be reductive $k$-groups. A $k$-isogeny $f:G_1\rightarrow G_2$ is \emph{central} if $\textup{ker}\,df_1$ is central in $\mathfrak{g_1}$ where $\textup{ker}\,df_1$ is the differential of $f$ at the identity of $G_1$ and $\mathfrak{g_1}$ is the Lie algebra of $G_1$. 
\end{defn}
\begin{prop}\label{isogeny}
Let $G_1$ and $G_2$ be reductive $k$-groups. Let $f:G_1 \rightarrow G_2$ be a central $k$-isogeny. 
\begin{enumerate}
\item{Suppose that $H_1<G_1^{\circ}$ (this holds, in particular, if $H_1$ is connected). If $H_1$ is $G_1$-cr over $k$, then $f(H_1)$ is $G_2$-cr over $k$.}
\item{Suppose that $f^{-1}(H_2)<G_1^\circ$. If $H_2$ is $G_2$-cr over $k$, then $f^{-1}(H_2)$ is $G_1$-cr over $k$.} 
\end{enumerate}
\end{prop}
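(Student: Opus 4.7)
The plan is to exploit the bijective correspondence between $R$-parabolic and $R$-Levi subgroups of $G_1$ and $G_2$ induced by the central $k$-isogeny $f$. First I would reduce to the connected case: since the hypotheses place $H_1$ (resp.\ $f^{-1}(H_2)$) inside $G_1^\circ$ and $f(H_1)$ (resp.\ $H_2$) inside $G_2^\circ$, and since $G$-complete reducibility over $k$ agrees with $G^\circ$-complete reducibility over $k$ for subgroups sitting in $G^\circ$ (by the results of Bate--Martin--R\"ohrle cited in the paper), we may assume both $G_1$ and $G_2$ are connected reductive. In this setting $R$-parabolics and $R$-Levis are ordinary parabolics and Levis in the sense of the classical theory.

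The key correspondence is as follows. For each $\lambda \in Y_k(G_1)$, the composition $f\circ \lambda$ lies in $Y_k(G_2)$, and by direct inspection of the limit definitions in Definition~\ref{R-parabolic} one has $f(P_\lambda) = P_{f\circ\lambda}$ and $f(L_\lambda) = L_{f\circ\lambda}$. Because the kernel of a central isogeny is central in $G_1$ (and hence contained in every maximal $k$-torus, and so in every $k$-defined parabolic and Levi subgroup of $G_1$), these maps are in fact inverse bijections: $f^{-1}(P_{f\circ\lambda}) = P_\lambda$ and $f^{-1}(L_{f\circ\lambda}) = L_\lambda$. Moreover, $\lambda \mapsto f\circ\lambda$ embeds $Y_k(G_1)$ into $Y_k(G_2)$ as a subgroup of finite index, and since $P_\mu = P_{n\mu}$ and $L_\mu = L_{n\mu}$ for every positive integer $n$, every $k$-defined parabolic and Levi subgroup of $G_2$ arises as $f(P_\lambda)$ and $f(L_\lambda)$ respectively for some $\lambda \in Y_k(G_1)$.

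With the correspondence in hand, both parts are short. For (i), let $P'$ be a $k$-defined parabolic of $G_2$ containing $f(H_1)$, and write $P' = f(P)$ for the unique $k$-defined parabolic $P = f^{-1}(P')$ of $G_1$. Then $H_1 \subset f^{-1}(f(H_1)) \subset f^{-1}(P') = P$, so by $G_1$-complete reducibility of $H_1$ over $k$ some $k$-defined Levi $L$ of $P$ contains $H_1$, and $f(L)$ is then a $k$-defined Levi of $P'$ containing $f(H_1)$. Part (ii) is dual: given a $k$-defined parabolic $Q$ of $G_1$ containing $f^{-1}(H_2)$, its image $f(Q)$ is a $k$-defined parabolic of $G_2$ containing $H_2$, and by hypothesis some $k$-defined Levi $M$ of $f(Q)$ contains $H_2$; then $f^{-1}(M)$ is a $k$-defined Levi of $Q$ containing $f^{-1}(H_2)$.

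The main obstacle I anticipate is the careful verification of the correspondence $f(P_\lambda) = P_{f\circ\lambda}$ and $f^{-1}(P_{f\circ\lambda}) = P_\lambda$ together with their $k$-definedness (and similarly for Levi subgroups). Centrality of $\ker f$ is essential here: without it, one would only obtain $f^{-1}(P_{f\circ\lambda}) \supset P_\lambda$ rather than equality. The cleanest route is to unpack the limit definition of $P_\lambda$ and $L_\lambda$ directly and use that $\ker f$ acts trivially by conjugation, rather than to argue abstractly through root data.
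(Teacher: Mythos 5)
Your proposal is correct and follows essentially the same route as the paper: the paper also first reduces to connected groups via the equivalence of $G$-cr over $k$ with $G^{\circ}$-cr over $k$ (Proposition~\ref{geometric} together with the cited cocharacter-closure result), and then invokes the connected case from the author's earlier paper, whose content is precisely the correspondence of $k$-defined ($R$-)parabolic and Levi subgroups under a central isogeny that you reconstruct. The only difference is that you re-derive that connected-case correspondence (where, as you rightly flag, centrality of $\ker f$ is what makes the preimage of a $k$-defined parabolic again a $k$-defined parabolic) instead of citing it.
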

\begin{proof}
Proposition~\ref{geometric} and~\cite[Cor.~5.3]{Bate-cocharacter-Arx} show that a subgroup $H<G^{\circ}$ of a reductive $G$ is $G$-cr over $k$ if and only if it is $G^{\circ}$-cr over $k$. Now the result follows from the connected case~\cite[Prop.~1.12]{Uchiyama-Nonperfect-pre}.
\end{proof}
\begin{rem}
In Proposition~\ref{isogeny} if we know that a $k$-defined $R$-parabolic subgroup of $G_1$ always arises as the inverse image of a $k$-defined $R$-parabolic subgroup of $G_2$, then a similar argument as in the proof of~\cite[Prop.~1.12]{Uchiyama-Nonperfect-pre} goes through and we can omit the assumptions ``$H_1<G_1^{\circ}$'' in Part 1 and ``$f^{-1}(H_2)<G_1^\circ$'' in Part 2. We do not know if this is the case or not.  
\end{rem}

Now we recall some terminology from GIT~\cite[Def.~1.1, Sec.~2.4]{Bate-cocharacter-Arx}. Let $V$ be a $G$-variety. Let $v\in V$. 
\begin{defn}\label{cocharacterclosure}
We say that $G(k)\cdot v$ is \emph{cocharacter closed over $k$} if for every $\lambda\in Y_k(G)$ such that $v':=\text{lim}_{a\rightarrow 0}\lambda(a)\cdot v$ exists, $v'$ is $G(k)$-conjugate to $v$. Moreover, we say that $G\cdot v$ is \emph{cocharacter closed} if for every cocharacter $\lambda$ of $G$ such that $v':=\text{lim}_{a\rightarrow 0}\lambda(a)\cdot v$ exists, $v'$ is $G$-conjugate to $v$. 
\end{defn}
Note that by the Hilbert-Mumford theorem~\cite{Kempf-instability-Ann}, $G\cdot v$ is cocharacter closed if and only if it is Zariski closed. 

\begin{defn}\label{destabilizingcocharacter}
Let $\lambda\in Y_k(G)$. We say that \emph{$\lambda$ destabilizes $v$ over $k$} if $\text{lim}_{a\rightarrow 0}\lambda(a)\cdot v$ exists. Moreover if $v':=\text{lim}_{a\rightarrow 0}\lambda(a)\cdot v$ exists and $v'$ is not $G(k)$-conjugate to $v$, we say that \emph{$\lambda$ properly destabilizes $v$ over $k$}. Similarly, for $\lambda\in Y(G)$, if $\text{lim}_{a\rightarrow 0}\lambda(a)\cdot v$ exists, we say that \emph{$\lambda$ destabilizes $v$}. If $v':=\text{lim}_{a\rightarrow 0}\lambda(a)\cdot v$ exists for $\lambda\in Y(G)$ and $v'$ is not $G$-conjugate to $v$, we say that \emph{$\lambda$ properly destabilizes $v$}. 
\end{defn}

We use the following very useful results from GIT~\cite[Thm.~3.3]{Bate-uniform-TransAMS} and~\cite[Cor.~5.1]{Bate-cocharacter-Arx}.
\begin{prop}\label{perfectconjugacy}
Let $k$ be perfect. Suppose that $v':=\text{lim}_{a\rightarrow 0}\lambda(a)\cdot v$ exists for $\lambda\in G$ and $v'$ is $G(k)$-conjugate to $v$. Then $v'$ is $R_u(P_\lambda)(k)$-conjugate to $v$. 
\end{prop}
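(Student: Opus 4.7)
The plan is to handle this in two stages: an over-$\overline k$ geometric step that produces $u\in R_u(P_\lambda)(\overline k)$ with $u\cdot v=v'$, followed by a descent step exploiting perfectness of $k$ to upgrade to a $k$-rational element of $R_u(P_\lambda)$.

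For the geometric stage, the starting observation is that $v'$ is fixed by $\lambda(\overline k^*)$, since $\lambda(b)\cdot v'=\lim_{a\to 0}\lambda(ab)\cdot v=v'$ for every $b$. If $g\in G(k)$ satisfies $g\cdot v=v'$, then rewriting $\lambda(a)\cdot v=\lambda(a)g^{-1}\cdot v'=(\lambda(a)g^{-1}\lambda(a)^{-1})\cdot v'$ (using $\lambda(a)\cdot v'=v'$) shows that the coset $g\cdot\textup{Stab}_G(v)$ is fixed by $\lambda$-conjugation in $G/\textup{Stab}_G(v)$, equivalently that the conjugate cocharacter $g^{-1}\lambda g$ factors schematically through $\textup{Stab}_G(v)$. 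Using the Levi decomposition $P_\lambda=R_u(P_\lambda)\rtimes L_\lambda$ and the canonical retraction $\pi_\lambda(p):=\lim_{a\to 0}\lambda(a)p\lambda(a)^{-1}$, I would then modify $g$ on the right by a stabilizer element to place it into $P_\lambda$, and finally absorb its $L_\lambda$-component into $\textup{Stab}_G(v')$ (using that $L_\lambda$ preserves the $\lambda$-fixed subvariety $V^\lambda\ni v'$), leaving an element $u\in R_u(P_\lambda)(\overline k)$ with $u\cdot v=v'$.

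For the descent step, set $Z:=\{u\in R_u(P_\lambda)\mid u\cdot v=v'\}$. This is a $k$-defined affine subscheme of $R_u(P_\lambda)$ (since $\lambda$, $v$, $v'$ are $k$-defined), nonempty over $\overline k$ by the previous stage, and a torsor under the $k$-group scheme $R_u(P_\lambda)\cap\textup{Stab}_G(v)$. Because $\lambda\in Y_k(G)$, the unipotent radical $R_u(P_\lambda)$ is smooth, connected, and $k$-split, admitting a filtration with $\mathbb G_a$-quotients; over perfect $k$ this yields $H^1(k,R_u(P_\lambda))=0$, and a standard cohomological chase along the orbit map $R_u(P_\lambda)\to V$, $h\mapsto h\cdot v$, extracts a $k$-rational point of $Z$, giving the required $u\in R_u(P_\lambda)(k)$.

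The main obstacle is the geometric stage, specifically forcing $g$ into $P_\lambda\cdot\textup{Stab}_G(v)$ when $\textup{Stab}_G(v)$ is nonsmooth — precisely the nonseparability phenomenon emphasized in the paper. A plain Levi decomposition of $g$ inside $G$ is insufficient; one must exploit that $g^{-1}\lambda g$ factors schematically through $\textup{Stab}_G(v)$, combined with the $\lambda$-dynamics on $V^\lambda$ via $\pi_\lambda$, to ensure a representative exists in $P_\lambda$. Once this is in hand, the Levi absorption and the Galois-cohomological descent are essentially formal.
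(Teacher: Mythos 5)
First, a point of reference: the paper does not prove this proposition at all --- it is imported verbatim from the literature (it cites \cite[Thm.~3.3]{Bate-uniform-TransAMS} and \cite[Cor.~5.1]{Bate-cocharacter-Arx}), so your attempt is in effect a proposal to reprove those results, and it has two genuine gaps, one in each stage. In the geometric stage, the part that does work is the ``Levi absorption'': if one knows $g\in P_\lambda\cdot\mathrm{Stab}_G(v)$, then writing $g=uls$ with $u\in R_u(P_\lambda)$, $l\in L_\lambda$, $s\in\mathrm{Stab}_G(v)$ and taking the limit along $\lambda$ shows $l$ fixes $v'$, and conjugating by $l$ keeps you inside $R_u(P_\lambda)$, giving $v'\in R_u(P_\lambda)\cdot v$. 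But the claim that $g$ can be moved into $P_\lambda$ by a right stabilizer factor is precisely the content of the cited theorem, and you give no argument for it. The facts you propose to ``exploit'' --- that $\lambda$ fixes $v'$ and hence $g^{-1}\lambda g$ fixes $v$ --- are correct but far too weak: having two conjugate cocharacters, one fixing $v'$ and one fixing $v$, gives no control on the position of $g$ relative to $P_\lambda$, and the published proof of \cite[Thm.~3.3]{Bate-uniform-TransAMS} is a substantive argument, not a formal manipulation of this kind. (Also, the assertion that the coset $g\cdot\mathrm{Stab}_G(v)$ is ``fixed by $\lambda$-conjugation in $G/\mathrm{Stab}_G(v)$'' is not meaningful: conjugation does not act on that quotient; your computation only yields that $g^{-1}\lambda g$ fixes $v$.)

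The descent stage is also not correct as stated: the obstruction to finding a $k$-point of $Z=\{u\in R_u(P_\lambda)\mid u\cdot v=v'\}$ is not $H^1(k,R_u(P_\lambda))$ but the (fppf) $H^1$ of the scheme-theoretic stabilizer $R_u(P_\lambda)\cap\mathrm{Stab}_G(v)$, of which $Z$ is a torsor, and this need not vanish even over a perfect field. Concretely, let $k=\mathbb{F}_p$ and let $\mathbb{G}_a$ act on $\mathbb{A}^1$ by $t\cdot x=x+t^p-t$: all geometric points lie in one orbit, yet $0$ and $1$ are not $\mathbb{G}_a(k)$-conjugate, the stabilizer being $\mathbb{Z}/p$ with $H^1(k,\mathbb{Z}/p)\neq 0$. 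So ``split unipotent acting, $k$ perfect, same $\overline{k}$-orbit'' does not by itself give $R_u(P_\lambda)(k)$-conjugacy; the rational statement genuinely uses the extra hypotheses (the limit along a $k$-defined cocharacter and the $G(k)$-conjugacy of $v$ and $v'$), which is exactly how \cite[Cor.~5.1]{Bate-cocharacter-Arx} proceeds. As it stands, both load-bearing steps of your proof are missing; the correct move in the context of this paper is simply to cite the two results, as the author does.
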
   
For nonperfect $k$, we do not know whether Proposition~\ref{perfectconjugacy} still holds~\cite[Question 7.8]{Bate-cocharacter-Arx}. 
It is known that if the centralizer of $v$ in $G$ is separable, it holds for nonperfect $k$~\cite[Thm.~7.1]{Bate-cocharacter-Arx}. See~\cite{Bate-cocharacter-Arx} and~\cite{Bate-cocharacterbuildings-Arx} for details. 

\begin{prop}\label{closureconjugacy}
Suppose that $v':=\text{lim}_{a\rightarrow 0}\lambda(a)\cdot v$ exists for $\lambda\in G$ and $v'$ is $G(k)$-conjugate to $v$.
If $G(k)\cdot v$ is cocharacter closed over $k$, then $v'$ is $R_u(P_\lambda)(k)$-conjugate to $v$. 
\end{prop}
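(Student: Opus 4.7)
The plan is to adapt the Kempf-Rousseau style argument used in the perfect case (Proposition~\ref{perfectconjugacy}), substituting cocharacter-closedness for perfectness. The target refines the Hilbert-Mumford criterion over $k$: once the limit $v'$ already lies in $G(k)\cdot v$, the conjugacy can be witnessed by a $k$-rational element of $R_u(P_\lambda)$.

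First, by Definition~\ref{cocharacterclosure} applied to $\lambda\in Y_k(G)$, the cocharacter-closed hypothesis immediately gives $v'\in G(k)\cdot v$; pick $g\in G(k)$ with $g\cdot v = v'$. The task is to modify $g$ within its coset $g\,G_v$ (where $G_v$ denotes the stabilizer of $v$) so that the new representative lies in $R_u(P_\lambda)(k)$. Equivalently, we want to show $g\in R_u(P_\lambda)(k)\cdot G_v$.

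Second, the following auxiliary observation is the structural key. The conjugated $k$-cocharacter $\lambda':=g^{-1}\lambda g \in Y_k(G)$ satisfies $\lambda'(a)\cdot v = g^{-1}\lambda(a)\cdot(g\cdot v) = g^{-1}\lambda(a)\cdot v' = g^{-1}\cdot v' = v$, since $v'$ is fixed by $\lambda(\overline k^*)$ (the limit point is invariant under the one-parameter subgroup producing it, by a direct continuity argument). Thus $\lambda'(\overline k^*)\subseteq G_v$, and the problem reduces to the following: given two $G(k)$-conjugate $k$-cocharacters $\lambda$ and $\lambda'$ with $\lambda'$ fixing $v$, produce a conjugator lying in $R_u(P_\lambda)(k)\cdot G_v$.

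Third, the conclusion is extracted using the Levi decomposition $P_\lambda = R_u(P_\lambda)\rtimes L_\lambda$ together with the fact that limits along $\lambda$ are detected on the $R_u(P_\lambda)$-piece: if $g=u\ell$ with $u\in R_u(P_\lambda)$ and $\ell\in L_\lambda$, then $\lim_{a\to 0}\lambda(a)g\lambda(a)^{-1}=\ell$, so the $L_\lambda$-part is forced to act trivially on the limit. The main obstacle is the descent to $k$: over nonperfect $k$, the stabilizer $G_v$ may have non-smooth scheme structure and Galois cohomology need not produce $k$-rational representatives of $\overline k$-conjugators, so the direct perfect-case argument breaks down. Cocharacter-closedness is precisely the replacement engineered in~\cite{Bate-cocharacter-Arx} to circumvent this failure of separability; indeed the proposition is recorded there as Corollary 5.1, whose proof systematically applies the cocharacter-closure formalism developed throughout that paper to produce the required $u\in R_u(P_\lambda)(k)$.
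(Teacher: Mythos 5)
The paper offers no proof of this proposition at all: it is quoted directly from the literature (Proposition~\ref{perfectconjugacy} from \cite[Thm.~3.3]{Bate-uniform-TransAMS}, Proposition~\ref{closureconjugacy} from \cite[Cor.~5.1]{Bate-cocharacter-Arx}), so your concluding appeal to that corollary is exactly the paper's treatment, and your step-two observation that $g^{-1}\lambda g$ is a $k$-cocharacter fixing $v$ is correct. One caution about the sketch you wrap around the citation: in your third step you cannot write $\lim_{a\to 0}\lambda(a)g\lambda(a)^{-1}=\ell$, because the element $g\in G(k)$ chosen with $g\cdot v=v'$ need not lie in $P_\lambda$ at all, so it has no decomposition $g=u\ell$ with $u\in R_u(P_\lambda)$, $\ell\in L_\lambda$; moreover, even over $\overline k$ the fact that a conjugator can be found inside $R_u(P_\lambda)$ is not a formal consequence of the Levi decomposition but rests on the Kempf--Rousseau optimality machinery, and its rational refinement under the cocharacter-closedness hypothesis is precisely the content of \cite[Cor.~5.1]{Bate-cocharacter-Arx}. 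In short, the reduction you describe does not bring the statement close to proved on its own --- the heavy lifting sits entirely in the cited result, which is also how the paper uses it.
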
   

\section{$G$-cr over $k$ vs $G$-cr}
We now prove Theorems~\ref{Prop1} and~\ref{Prop2}. Our proof works for both connected and non-connected $G$ in a uniform way.

\begin{proof}[Proof of Theorem~\ref{Prop1}]
Since $H$ is not $G$-cr over $k$, there exists a proper $k$-defined $R$-parabolic subgroup $P$ of $G$ containing $H$. Let $P=P_\lambda$ be a minimal such $k$-defined $R$-parabolic subgroup where $\lambda\in Y_k(G)$. Since $H$ is $G$-cr and $R$-Levi subgroups of $P_\lambda$ are $R_u(P_\lambda)(\overline k)$-conjugate by~\cite[Cor.~6.7]{Bate-geometric-Inventione}, there exists $u\in R_u(P_\lambda)(\overline k)$ such that $H$ is contained in $u^{-1}\cdot L_{\lambda}$. Then $u\cdot H$ is contained in $L_\lambda$. Suppose that $u\cdot H$ is not $G$-cr over $k$. Then it is not $L_\lambda$-cr over $k$ by Proposition~\ref{G-cr-L-cr}. So there exists a proper $k$-defined $R$-parabolic subgroup $P_L$ of $L_\lambda$ containing $u\cdot H$. Thus $u\cdot H$ is contained in a $k$-defined $R$-parabolic subgroup $Q:=P_L\ltimes R_u(P_\lambda)$ of $G$ (note: the subgroup $Q$ is an $R$-parabolic subgroup of $G$ by~\cite[Lem.~6.2(\rmnum{2})]{Bate-geometric-Inventione}). Then $H$ is contained in $u^{-1}\cdot Q$. Note that $u\in R_u(P_\lambda)<Q$. Thus $u^{-1}\cdot Q=Q$ and we have $H<Q$. It is clear that $Q$ is strictly contained in $P_\lambda$. This contradicts the minimality of $P_\lambda$. So we conclude that $u\cdot H$ is $G$-cr over $k$.   
\end{proof}

\begin{proof}[Proof of Theorem~\ref{Prop2}]
We start with Part 1. Let $P_\lambda$ be minimal among $k$-defined $R$-parabolic subgroups of $G$ containing $H$. Since $H$ is $G$-cr over $k$, there exists a $k$-defined $R$-Levi subgroup $L$ of $P_\lambda$ containing $H$. Since $k$-defined $R$-Levi subgroups of $P_\lambda$ are $R_u(P_\lambda)(k)$-conjugate, there exists $u\in R_u(P_\lambda)(k)$ such that $L=u\cdot L_\lambda$. Set $L_\mu:=u\cdot L_\lambda$. Suppose that $H$ is not $L_\mu$-ir over $k$. So there exists a $k$-defined proper $R$-parabolic subgroup $P_L$ of $L_\mu$ containing $H$. Then we have $H<P_L<P_L\ltimes R_u(P_\lambda)$.
Since $P_L\ltimes R_u(P_\lambda)$ is a $k$-defined $R$-parabolic subgroup of $G$ strictly contained in $P_\lambda$, this contradicts the minimality of $P_\lambda$.

For part 2, let $L$ be a minimal $k$-defined $R$-Levi subgroup containing $H$. Since $H$ is not $L$-cr, there exists a proper $R$-parabolic subgroup of $L$ containing $H$. Let $P_L$ be a minimal such $R$-parabolic subgroup of $L$. Since an $R$-parabolic subgroup of $L$ is $L(\overline k)$-conjugate to a $k$-defined $R$-parabolic subgroup of $L$, there exists $l\in L(\overline k)$ such that $l\cdot P_L$ is a $k$-defined $R$-parabolic subgroup of $L$. Then $l\cdot H < l\cdot P_L$. Suppose that $l\cdot H$ is $G$-cr over $k$. Then $l\cdot H$ is $L$-cr over $k$ by Proposition~\ref{G-cr-L-cr}, so there exists a $k$-defined $R$-Levi subgroup $M$ of $l\cdot P_L$ containing $l\cdot H$. Note that $l\cdot H$ is not $M$-cr since $H$ is not $G$-cr. Then there exists a proper $R$-parabolic subgroup $P_M$ of $M$ containing $l\cdot H$. Thus $P':=P_M\ltimes R_u(l\cdot P_L)$ is an $R$-parabolic subgroup of $L$ containing $l\cdot H$. Then $l^{-1}\cdot P'$ is an $R$-parabolic subgroup of $L$ containing $H$. It is clear that $l^{-1}\cdot P'$ is a proper subgroup of $P_L$. This contradicts the minimality of $P_L$. Thus $l\cdot H$ is not $G$-cr over $k$.  
\end{proof}

\begin{rem}
Although Theorems~\ref{Prop1} and~\ref{Prop2} (and ideas in the proofs) explain necessary conditions to have examples of a subgroup $H$ of $G$ that is $G$-cr over $k$ but not $G$-cr (or vice versa), it is still a difficult problem to find concrete such examples with a $k$-defined $H$. In the next section, we use the converse of Theorems~\ref{Prop1} and~\ref{Prop2}: we start with some subgroup of $G$ and conjugate it by $u$ (or $l$) as in the proof of Theorems~\ref{Prop1} and~\ref{Prop2} to obtain a subgroup with the desired property. For our construction to work, $u$ (or $l$) needs to be chosen very carefully and the choice is closely related to the nonseparability of $H$. We show all details in the next section. The same idea was used in~\cite{Bate-separability-TransAMS},~\cite{Uchiyama-Nonperfect-pre},~\cite{Uchiyama-Classification-pre}, and~\cite{Uchiyama-Separability-JAlgebra}.  
\end{rem}

\section{The $D_4$ example}
In this section we prove Theorem~\ref{D4example}. We use the triality of $D_4$ in an essential way.\\

Let $\tilde{G}$ be a simple algebraic group of type $D_4$ defined over a nonperfect field of characteristic $2$. Fix a maximal $k$-torus of $\tilde{G}$ and a $k$-defined Borel subgroup of $\tilde{G}$. Let $\Psi(\tilde G)=\Psi(\tilde G,T)$ be the set of roots corresponding to $T$, and $\Psi(\tilde{G})^{+}=\Psi(\tilde{G},B,T)$ be the set of positive roots of $\tilde{G}$ corresponding to $T$ and $B$. Label the simple roots of $\tilde{G}$ as depicted in the following Dynkin diagram.
\begin{figure}[h]
                \centering
                \scalebox{0.7}{
                \begin{tikzpicture}
                \draw (1,0) to (2,0);
                \draw (2,0) to (2.5,0.85);
                \draw (2,0) to (2.5,-0.85);
                \fill (1,0) circle (1mm);
                \fill (2,0) circle (1mm);
                \fill (2.5,0.85) circle (1mm);
                \fill (2.5,-0.85) circle (1mm);
                \draw[below] (1,-0.2) node{$\alpha$};
                \draw[below] (1.8,-0.2) node{$\beta$};
                \draw[below] (2.8,1.1) node{$\gamma$};
                \draw[below] (2.8,-0.7) node{$\delta$};
                \draw [->,thick] (1.0,0.35) to [out=70,in=160] (2.0,0.9);
                \node (a) at (1.5, 1.1) {$\sigma$};
                \draw [->,thick] (2.9,0.6) to [out=-30,in=30] (2.9,-0.6);
                \draw [->,thick] (2.1,-1.1) to [out=180,in=-60] (1.0,-0.7);
               \end{tikzpicture}}
\end{figure}
Let $G:=\tilde{G}\rtimes \langle \sigma \rangle$ where $\sigma$ is the non-trivial element of the graph automorphism group of $\tilde{G}$ (normalizing $T$ and $B$) as depicted in the diagram; we have $\sigma\cdot \alpha=\gamma, \; \sigma\cdot \gamma=\delta, \; \sigma\cdot \delta=\alpha$, and $\beta$ is fixed by $\sigma$. We label $\Psi(\tilde{G})^{+}$ as follows. The corresponding negative roots are labeled accordingly. Note that Roots 1, 2, 3, 4 correspond to $\alpha$, $\gamma$, $\delta$, $\beta$ respectively.

\begin{table}[!h]
\begin{center}
\scalebox{0.7}{
\begin{tabular}{cccccc}
\rootsD{1}{0}{1}{0}{0}&\rootsD{2}{1}{0}{0}{0}&\rootsD{3}{0}{0}{0}{1}&\rootsD{4}{0}{0}{1}{0}&\rootsD{5}{0}{1}{1}{0}&\rootsD{6}{1}{0}{1}{0}\\
\rootsD{7}{0}{0}{1}{1}&\rootsD{8}{1}{1}{1}{0}&\rootsD{9}{0}{1}{1}{1}&\rootsD{10}{1}{0}{1}{1}&\rootsD{11}{1}{1}{1}{1}&\rootsD{12}{1}{1}{2}{1}\\
\end{tabular}
}
\end{center}
\end{table}   

Define
$\lambda:=(\alpha+2\beta+\gamma+\delta)^{\vee}=\alpha^{\vee}+2\beta^{\vee}+\gamma^{\vee}+\delta^{\vee}$. 
Then 
\begin{alignat*}{2}
P_\lambda&=\langle T, \sigma, U_{\zeta}\mid \zeta\in \Psi(\tilde G)^{+}\cup \{-1,-2,-3\} \rangle,\\
L_\lambda&=\langle T, \sigma, U_{\zeta}\mid \zeta\in \{\pm 1,\pm 2,\pm 3\} \rangle,\\
R_u(P_\lambda)&=\langle U_{\zeta} \mid \zeta \in \Psi(\tilde G)^{+}\backslash \{1, 2, 3\} \rangle.
\end{alignat*}
Let $a\in k\backslash k^2$. Let $v(\sqrt a):=\epsilon_{6}(\sqrt a)\epsilon_{9}(\sqrt a)\in R_u(P_\lambda)(\overline k)$. Define
\begin{equation*}
H:=v(\sqrt a)\cdot\langle (n_{\alpha}\sigma), \; (\alpha+\gamma)^{\vee}(\overline k^*) \rangle.
\end{equation*}
Here is our first main result in this section.
\begin{prop}\label{firstmain}
$H$ is $k$-defined. Moreover, $H$ is $G$-cr but not $G$-cr over $k$. 
\end{prop}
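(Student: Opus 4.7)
The plan is to establish three claims in succession: $H$ is $k$-defined, $H$ is $G$-cr, and $H$ is not $G$-cr over $k$. Write $H_0 := \langle n_\alpha\sigma,\,(\alpha+\gamma)^\vee(\overline{k}^*)\rangle$, so that $H = v(\sqrt{a})\cdot H_0\cdot v(\sqrt{a})^{-1}$. The common engine in the first and third claims is the conjugation action of $v(\sqrt{a})$ on the generators of $H_0$, combined with characteristic-$2$ sign collapse and the nonseparability of $H$ in $G$.

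For $k$-definedness, I would conjugate each generator of $H_0$ by $v(\sqrt{a})$ directly. Since the pairings $\langle 6,\alpha^\vee+\gamma^\vee\rangle$ and $\langle 9,\alpha^\vee+\gamma^\vee\rangle$ both vanish, $v(\sqrt{a})$ commutes pointwise with $(\alpha+\gamma)^\vee(\overline{k}^*)$. Next, $s_\alpha\sigma$ swaps roots $6$ and $9$ (a short triality calculation), so applying the Chevalley commutator relation $[\epsilon_6(s),\epsilon_9(t)] = \epsilon_{12}(c\, s t)$ with $6+9=12\in\Psi^+$ and the fact that characteristic $2$ trivializes all signs yields
\[v(\sqrt{a})(n_\alpha\sigma)v(\sqrt{a})^{-1} = \epsilon_{12}(ca)\,(n_\alpha\sigma),\]
where $c\in\mathbb{F}_2^{\times}$. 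Both conjugated generators lie in $G(k)$, so $H$ is $k$-defined.

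For the $G$-cr claim, $H$ sits inside the $\overline{k}$-defined $R$-Levi subgroup $v(\sqrt{a})L_\lambda v(\sqrt{a})^{-1}$ of $P_\lambda$, so by Proposition~\ref{G-cr-L-cr} it suffices to prove $H_0$ is $L_\lambda$-cr; I would prove the stronger statement that $H_0$ is $L_\lambda$-irreducible. The connected component $L_\lambda^\circ$ is isogenous to $(\mathrm{SL}_2)^3$ with $\sigma$ cycling the three factors, and the element $(n_\alpha\sigma)^3 = n_\alpha n_\gamma n_\delta \in H_0$ is a nontrivial Weyl representative in each factor (using $\sigma^3=1$ and pairwise orthogonality of $\alpha,\gamma,\delta$). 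Therefore no proper parabolic of $L_\lambda^\circ$ contains it, and $H_0\not\subseteq L_\lambda^\circ$ eliminates the only remaining proper $R$-parabolic option in the non-connected group $L_\lambda$.

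The crux is the third claim. Since $H\subseteq P_\lambda$, suppose for contradiction $H\subseteq u' L_\lambda (u')^{-1}$ for some $u'\in R_u(P_\lambda)(k)$. Setting $w := (u')^{-1}v(\sqrt{a})\in R_u(P_\lambda)(\overline{k})$ gives $w H_0 w^{-1}\subseteq L_\lambda$. I would check that projecting $w(n_\alpha\sigma)w^{-1}$ and $w(\alpha+\gamma)^\vee(t)w^{-1}$ to $R_u(P_\lambda)$ via the Levi decomposition forces $w\in C_{R_u(P_\lambda)}(H_0)$. A coordinatewise computation---first using the $(\alpha+\gamma)^\vee$-weights to kill the root-group coordinates at $\zeta\in\{4,7,8,11\}$, then using $n_\alpha\sigma$-fixedness to kill $\zeta\in\{5,10\}$, and finally handling the $\{6,9,12\}$-block---collapses in characteristic $2$ to $C_{R_u(P_\lambda)}(H_0)=U_{12}$. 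This final collapse is exactly the nonseparability phenomenon: the Lie-algebra vector $X_6+X_9$ is fixed by $n_\alpha\sigma$, yet the one-parameter subgroup $\{\epsilon_6(s)\epsilon_9(s)\}$ fails to centralize $n_\alpha\sigma$ because $[\epsilon_6(s),\epsilon_9(s)] = \epsilon_{12}(c s^{2})$ is nontrivial in characteristic $2$ for $s\neq 0$. Consequently $u' = w^{-1}v(\sqrt{a}) \in U_{12}\cdot v(\sqrt{a})$, whose $U_6$- and $U_9$-coordinates equal $\sqrt{a}\notin k$, contradicting $u'\in R_u(P_\lambda)(k)$. The main obstacle is precisely this centralizer computation: one must isolate the characteristic-$2$ commutator $[\epsilon_6(s),\epsilon_9(s)]=\epsilon_{12}(c s^{2})$ as the exact obstruction that shrinks the transporter enough to trap $\sqrt{a}$ in every would-be $k$-Levi conjugator.
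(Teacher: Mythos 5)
Your first and third claims are handled correctly and along essentially the same lines as the paper: the conjugation formula $v(\sqrt a)\cdot(n_\alpha\sigma)=(n_\alpha\sigma)\epsilon_{12}(a)$ together with the vanishing pairings gives $k$-definedness, and for the failure of $G$-cr over $k$ your observation that $w:=(u')^{-1}v(\sqrt a)$ must centralize $H_0$ (valid since $L_\lambda\cap R_u(P_\lambda)=1$), combined with $C_{R_u(P_\lambda)}(H_0)=U_{12}$, is just a repackaging of the paper's coordinate computation (the paper computes $u^{-1}\cdot(n_\alpha\sigma\epsilon_{12}(a))$ directly, and proves the same centralizer statement later, in Lemma~\ref{centralizerofM}); your identification of the commutator $[\epsilon_6(s),\epsilon_9(s)]=\epsilon_{12}(s^2)$ as the nonseparability obstruction trapping $\sqrt a$ is exactly the point.

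There is, however, a genuine gap in your second claim. You assert that $n_\alpha n_\gamma n_\delta$ is a nontrivial Weyl representative in each $A_1$-factor and ``therefore no proper parabolic of $L_\lambda^\circ$ contains it.'' That implication is false: a single Weyl representative always lies in proper parabolic subgroups. Indeed, in characteristic $2$ each $n_\zeta=\epsilon_\zeta(1)\epsilon_{-\zeta}(1)\epsilon_\zeta(1)$ is the unipotent element $\left(\begin{smallmatrix}0&1\\1&0\end{smallmatrix}\right)$ of its $A_1$-factor, so $n_\alpha n_\gamma n_\delta$ is unipotent and is contained in a Borel subgroup of $L_\lambda^\circ$; even in good characteristic a Weyl representative is semisimple and lies in a maximal torus, hence in Borels. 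What forces $L_\lambda$-irreducibility is the Weyl element \emph{together with} the torus part of $H_0$, which your argument never uses: $H_0$ contains $(\alpha+\gamma)^\vee(\overline k^*)$ and its conjugate $(n_\alpha\sigma)\cdot(\alpha+\gamma)^\vee(\overline k^*)=(\gamma+\delta)^\vee(\overline k^*)$ (and a further conjugate), and these project onto the full maximal torus of each $A_1$-factor. A proper $R$-parabolic of $L_\lambda$ containing $H_0$ would meet $L_\lambda^\circ$ in a proper product of Borels/factors, and a Borel of an $A_1$-factor containing that factor's maximal torus cannot also contain the element $n_\zeta$ normalizing it (it interchanges the two Borels through that torus) --- this is why the paper explicitly records that $H'$ contains $(\alpha+\gamma)^\vee(\overline k^*)$, $(\gamma+\delta)^\vee(\overline k^*)$ and $n_\alpha n_\gamma n_\delta$ before concluding irreducibility. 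Your closing remark that ``$H_0\not\subseteq L_\lambda^\circ$ eliminates the only remaining proper $R$-parabolic option'' also needs this repair; as written, step 2 does not establish that $H_0$ is $L_\lambda$-ir, though the fix is short.
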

\begin{proof}
First, we have 
$
(n_\alpha \sigma) \cdot (\beta+\delta) = (n_\alpha \sigma) \cdot 6 = 9, \;
(n_\alpha \sigma) \cdot 9 = 6.
$
Using this and the commutation relations~\cite[Lem.~32.5 and Prop.~33.3]{Humphreys-book1}, we obtain
\begin{equation*}
v(\sqrt a)\cdot (n_{\alpha}\sigma)=(n_\alpha \sigma) \epsilon_{12}(a).
\end{equation*}
An easy computation shows that $v(\sqrt a)$ commutes with $(\alpha+\gamma)^{\vee}(\overline k^*)$. Now it is clear that $H$ is $k$-defined. 

Now we show that $H$ is $G$-cr. It is sufficient to show that $H':=v(\sqrt a)^{-1}\cdot H=\langle n_\alpha \sigma,\; (\alpha+\gamma)^{\vee}(\overline k^*)\rangle$ is $G$-cr since it is $G$-conjugate to $H$. Since $H'$ is contained in $L_\lambda$, by Proposition~\ref{G-cr-L-cr} it is enough to show that $H'$ is $L_\lambda$-cr. We actually show that $H'$ is $L_\lambda$-ir. Note that $L_\lambda=A_1\times A_1 \times A_1=L_\alpha\times L_\gamma\times L_\delta$. We have 
\begin{equation*}
(n_\alpha \sigma)\cdot (\alpha+\gamma)^{\vee}(\overline k^*) = (\gamma+\delta)^{\vee}(\overline k^*),\;
(n_\alpha \sigma)^3 = n_\alpha n_\gamma n_\delta.
\end{equation*}  
Thus $H'$ contains $(\alpha+\gamma)^{\vee}(\overline k^*)$, $(\gamma+\delta)^{\vee}(\overline k^*)$, and $n_\alpha n_\gamma n_\delta$. Now it is clear that $H'$ is $L_\lambda$-ir. 

Next, we show that $H$ is not $G$-cr over $k$. Suppose the contrary. Clearly $H$ is contained in a $k$-defined $R$-parabolic subgroup $P_\lambda$. Then there exists a $k$-defined $R$-Levi subgroup of $P_\lambda$ containing $H$. Then by~\cite[Lem.~2.5(\rmnum{3})]{Bate-uniform-TransAMS} there exists $u\in R_u(P_\lambda)(k)$ such that $H$ is contained in $u\cdot L_\lambda$. Thus $n_\alpha\sigma\epsilon_{12}(a) < u\cdot L_\lambda$. So $u^{-1}\cdot (n_\alpha\sigma \epsilon_{12}(a)) < L_{\lambda}$. By~\cite[Prop.~8.2.1]{Springer-book}, we can write
\begin{equation*}
u=\prod_{\zeta\in \Psi(R_u(P_\lambda))}\epsilon_\zeta(x_\zeta), 
\end{equation*}
for some scalars $x_\zeta$. We compute how $n_\alpha \sigma$ acts $\Psi(R_u(P_\lambda))$. Using the labelling of the positive roots above, we have $\Psi(R_u(P_\lambda))=\{4,\ldots, 12\}$. We compute how $n_\alpha \sigma$ acts on $\Psi(R_u(P_\lambda))$: 
\begin{equation}\label{perm}
n_\alpha \sigma = (4\;5\;8\;11\;10\;7) (6\;9) (12). 
\end{equation}
Using this and the commutation relations,
\begin{alignat*}{2}
u^{-1}\cdot (n_\alpha\sigma \epsilon_{12}(a))
=\; &n_\alpha \sigma\epsilon_7(x_4+x_7)\epsilon_{10}(x_7+x_{10})\epsilon_{9}(x_6+x_9)\epsilon_{11}(x_{10}+x_{11})\\
&\epsilon_{6}(x_6+x_9)\epsilon_{8}(x_8+x_{11})\epsilon_{4}(x_4+x_5)\epsilon_{5}(x_5+x_8)\\
&\epsilon_{12}(x_5 x_{10}+x_{5}x_{11}+x_7 x_8 +x_7 x_{11}+x_8 x_{10}+{x_9}^2+a).
\end{alignat*}
Thus if $u^{-1}\cdot (n_\alpha\sigma \epsilon_{\alpha+2\beta+\gamma+\delta}(a)) < L_{\lambda}$ we must have
\begin{alignat*}{2}
&x_4=x_5=x_7=x_{8}=x_{10}=x_{11},\; x_6=x_9,\\
&x_5 x_{10}+x_{5}x_{11}+x_7 x_8 +x_7 x_{11}+x_8 x_{10}+{x_9}^2+a=0.
\end{alignat*}
Set $x_4=y$. Then we have $y^2+{x_9}^2+a=0$. Thus $(y+x_9)^2=a$. This is impossible since $y, x_9\in k$ and $a\notin k^2$. We are done. 
\end{proof}

\begin{rem}\label{D4nonsep}
From the computations above we see that the curve $C(x):=\{\epsilon_{6}(x)\epsilon_{9}(x)\mid x\in \overline k\}$ is not contained in $C_G(H)$, but the corresponding element in $\textup{Lie}(G)$, that is, $e_6+e_9$ is contained in $\mathfrak{c}_{\mathfrak{g}}(H)$. Then the argument in the proof of~\cite[Prop.~3.3]{Uchiyama-Separability-JAlgebra} shows that $\textup{Dim}(C_G(H))$ is strictly smaller than $\textup{Dim}(\mathfrak{c}_{\mathfrak{g}}(H))$. So $H$ is non-separable in $G$. 
\end{rem} 

\vspace{5mm}
Now we move on to the second main result in this section. We use the same $G$, $\lambda$, and $a$ as above. We also use the same labelling of the roots of $G$. Let $v(\sqrt a):=\epsilon_{-6}(\sqrt a)\epsilon_{-9}(\sqrt a)$. Let
\begin{equation*}
K:=v(\sqrt a)\cdot \langle n_{\alpha}\sigma,\; (\alpha+\gamma)^{\vee}(\overline k^*)\rangle=\langle n_\alpha \sigma \epsilon_{-12}(a), \;  (\alpha+\gamma)^{\vee}(\overline k^*)\rangle. 
\end{equation*}
Define
\begin{equation*}
H:=\langle K, \; \epsilon_{11}(1) \rangle.
\end{equation*}

\begin{prop}\label{secondmain}
$H$ is $k$-defined. Moreover, $H$ is $G$-ir over $k$ but not $G$-cr. 
\end{prop}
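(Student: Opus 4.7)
The plan is to verify the three claims in succession, exploiting the symmetry between Proposition~\ref{secondmain} and Proposition~\ref{firstmain}; here $v(\sqrt a)\in R_u(P_{-\lambda})(\overline k)$ plays the role that the earlier $v(\sqrt a)\in R_u(P_\lambda)(\overline k)$ did, and the extra generator $\epsilon_{11}(1)$ flips the behaviour across $L_\lambda$. For $k$-definedness, I would compute $v(\sqrt a)\cdot(n_\alpha\sigma)=n_\alpha\sigma\epsilon_{-12}(a)$ using the permutation $(n_\alpha\sigma)\cdot(-6)=-9$ together with the commutator $[\epsilon_{-9}(s),\epsilon_{-6}(t)]=\epsilon_{-12}(\pm st)$, and check that $v(\sqrt a)$ commutes with $(\alpha+\gamma)^\vee$ from $\langle\pm 6,(\alpha+\gamma)^\vee\rangle=\langle\pm 9,(\alpha+\gamma)^\vee\rangle=0$; this shows the generators of $H$ are either $k$-rational or sit in a $k$-split torus, so $H$ is $k$-defined.

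To show $H$ is not $G$-cr, I would first exhibit $H\subset v(\sqrt a)\cdot P_\lambda$ by computing, in characteristic $2$, that $v(\sqrt a)^{-1}\epsilon_{11}(1)v(\sqrt a)=\epsilon_\gamma(\sqrt a)\epsilon_{11}(1)$. This uses that $\epsilon_{-6}$ commutes with $\epsilon_{11}$ (since $-6+11=\alpha+\delta$ is not a $D_4$-root) while $[\epsilon_{-9}(\sqrt a),\epsilon_{11}(1)]=\epsilon_\gamma(\pm\sqrt a)$ (since $-9+11=\gamma$). Hence $v(\sqrt a)^{-1}\cdot H=\langle K',\epsilon_\gamma(\sqrt a)\epsilon_{11}(1)\rangle\subset P_\lambda$, with $K'=\langle n_\alpha\sigma,(\alpha+\gamma)^\vee(\overline k^*)\rangle$ as in Proposition~\ref{firstmain}. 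Then I would show no $R$-Levi of $v(\sqrt a)\cdot P_\lambda$ contains $H$. Any such Levi is $v(\sqrt a)u\cdot L_\lambda$ for $u\in R_u(P_\lambda)(\overline k)$, so I impose $u^{-1}\cdot(v(\sqrt a)^{-1}\cdot H)\subset L_\lambda$. Requiring $u$ to commute with $(\alpha+\gamma)^\vee$ restricts its support to the roots $\{5,6,9,10,12\}$ (those pairing trivially with $(\alpha+\gamma)^\vee$); requiring $u$ to be fixed by $n_\alpha\sigma$ and accounting for the commutator $[\epsilon_6(s),\epsilon_9(t)]=\epsilon_{12}(\pm st)$ further reduces this to $u\in U_{12}$. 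But such $u$ commutes with both $\epsilon_\gamma(\sqrt a)$ and $\epsilon_{11}(1)$, so the remaining condition forces $\epsilon_{11}(1)\in L_\lambda$, which is false.

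For $G$-irreducibility over $k$, I would argue by contradiction: suppose $H\subset P_\mu$ for some $\mu\in Y_k(G)$ with $P_\mu\neq G$. Since $(\alpha+\gamma)^\vee(\overline k^*)\subset H\subset P_\mu$, one may replace $\mu$ by an $R_u(P_\mu)(k)$-conjugate (preserving $P_\mu$) so that $\mu$ centralizes $(\alpha+\gamma)^\vee$, and then by a $G(k)$-conjugation (using separable closedness of $k$ to move any maximal $k$-split torus into $T$) arrange $\mu\in Y(T)$. The inclusion $U_{11}\subset H$, obtained by conjugating $\epsilon_{11}(1)$ by $(\alpha+\gamma)^\vee(\overline k^*)$ and taking Zariski closure (using $\langle 11,(\alpha+\gamma)^\vee\rangle=2$), gives $\langle 11,\mu\rangle\geq 0$, while $n_\alpha\sigma\epsilon_{-12}(a)\in P_\mu$ yields both $(n_\alpha\sigma)\cdot\mu=\mu$ and $\langle 12,\mu\rangle\leq 0$. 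A direct computation on $Y(T)\otimes\mathbb{Q}$ shows that the $n_\alpha\sigma$-fixed cocharacters form the line spanned by $\lambda$, so $\mu=c\lambda$ with $c\geq 0$ and $2c\leq 0$, forcing $c=0$ and $P_\mu=G$. The main obstacle is making this last reduction to $\mu\in Y(T)$ precise: the $G(k)$-conjugation also moves $H$, so one must either arrange the conjugation to preserve $(\alpha+\gamma)^\vee$ (exploiting that this torus already lies in $H\cap T$) and the form of the remaining generators, or reformulate the constraints intrinsically on the original generators.
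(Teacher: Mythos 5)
Your treatment of the $k$-definedness and of the ``not $G$-cr'' half is correct, and the latter takes a genuinely different route from the paper. You show $H<v(\sqrt a)\cdot P_\lambda$ and then verify by hand that no $R$-Levi $v(\sqrt a)u\cdot L_\lambda$ ($u\in R_u(P_\lambda)(\overline k)$) contains $H$: the conditions coming from $(\alpha+\gamma)^{\vee}(\overline k^*)$ and $n_\alpha\sigma$ force $u\in U_{12}$ (this is exactly the computation behind Lemma~\ref{centralizerofM}, i.e.\ $C_{R_u(P_\lambda)}(M)=U_{12}$), and then the third generator $\epsilon_{11}(1)\epsilon_{2}(\sqrt a)$ has nontrivial $R_u(P_\lambda)$-component, so it cannot lie in $L_\lambda$. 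That is a complete and more self-contained argument than the paper's Lemma~\ref{nonG-cr}, which instead computes $C_G(v(\sqrt a)^{-1}\cdot H)^{\circ}=U_{12}$, concludes $C_G(H)^{\circ}$ is unipotent, and invokes Borel--Tits together with~\cite[Cor.~3.17]{Bate-geometric-Inventione}; the paper's route costs two external theorems but also records the centralizer information (and the nonseparability phenomenon) that is reused elsewhere.

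The $G$-irreducibility over $k$, however, is where the real content of the proposition sits, and there your argument has a genuine gap --- the very reduction you flag at the end. Observe that none of your constraints ($U_{11}<H<P_\mu$, $n_\alpha\sigma\epsilon_{-12}(a)\in P_\mu$, the $n_\alpha\sigma$-fixed line in $Y(T)\otimes\mathbb{Q}$ being $\mathbb{Q}\lambda$) uses $a\notin k^2$. But if $a\in k^2$ then $v(\sqrt a)\in G(k)$ and $v(\sqrt a)\cdot P_\lambda$ is a proper $k$-defined $R$-parabolic containing $H$, so $H$ is \emph{not} $G$-ir over $k$; hence no completion of your sketch can succeed unless the arithmetic hypothesis enters exactly at the step you leave open, namely conjugating a $k$-defined $\mu$ into $Y(T)$ while keeping the generators of $H$ in their explicit position relative to $T$. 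The $G(k)$-conjugation you propose moves $\epsilon_{11}(1)$ and $n_\alpha\sigma\epsilon_{-12}(a)$ to elements whose position relative to the root groups is unknown, so the inequalities $\langle 11,\mu\rangle\geq 0$ and $\langle 12,\mu\rangle\leq 0$ are no longer available; controlling this requires an analysis of where such a conjugate can sit, which is essentially the paper's two-step argument: Lemma~\ref{uniquepara} shows (over $\overline k$, via $C_G(M)^{\circ}=G_{12}$ and a Bruhat decomposition in $G_{12}$, the long-element case being excluded because $n_{12}^{-1}\cdot(\epsilon_{11}(1)\epsilon_{2}(\sqrt a))\notin P_\lambda$) that $v(\sqrt a)\cdot P_\lambda$ is the \emph{unique} proper $R$-parabolic containing $H$, and Lemma~\ref{nonkdefined} shows it is not $k$-defined via the rational big-cell decomposition --- which is precisely where $\sqrt a\notin k$ is used. (A smaller point: even granting $\mu\in Y(T)$, the deduction that $n_\alpha\sigma\epsilon_{-12}(a)\in P_\mu$ forces $(n_\alpha\sigma)\cdot\mu=\mu$ and $\langle 12,\mu\rangle\leq 0$ needs an argument, e.g.\ that the curve $n_\alpha\sigma\,\nu(t)\,\epsilon_{-12}(t^{\langle -12,\mu\rangle}a)$ lies in the closed set $n_\alpha\sigma\,TU_{-12}\cong T\times U_{-12}$, so its limit exists only if both factors converge; this is repairable, unlike the main gap.)
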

It is clear that $H$ is $k$-defined. We now prove Proposition~\ref{secondmain} with a series of lemmas. Define $M:=\langle n_\alpha \sigma, (\alpha+\gamma)^{\vee}(\overline k^*)\rangle$.
\begin{lem}\label{centralizerofM}
$C_G(M)^{\circ}=G_{12}$.
\end{lem}
\begin{proof}
First of all, from Equation~(\ref{perm}) we see that $G_{12}$ is contained in $C_G(n_\alpha \sigma)$. Since $\langle \alpha+2\beta+\gamma+\delta, (\alpha+\gamma)^{\vee} \rangle=0$, $G_{12}$ is also contained in $C_G((\alpha+\gamma)^{\vee}(\overline k^*))$. So $G_{12}$ is contained in $C_G(M)$. Recall that by~\cite[Thm.~13.4.2]{Springer-book} the image of the product map $C_{R_u(P_\lambda)}(M)^{\circ}\times C_{L_\lambda}(M)^{\circ}\times C_{R_u(P_\lambda^{-})}(M)^{\circ}\rightarrow G$ is an open subset of $C_G(M)^{\circ}$ where $P_\lambda^{-}$ is the opposite of $P_\lambda$ containing $L_\lambda$. Set $u:=\prod_{i\in \Psi(R_u(P_\lambda))}\epsilon_{i}(x_i)$ for some $x_i \in \overline k$. Using Equation~(\ref{perm}) and the commutation relations, we obtain
\begin{alignat*}{2}
(n_\alpha \sigma)\cdot u &= \epsilon_{4}(x_7)\epsilon_{5}(x_4)\epsilon_{6}(x_9)\epsilon_7(x_{10})\epsilon_8(x_{5})\epsilon_9(x_6)\epsilon_{10}(x_{11})\epsilon_{11}(x_8)\epsilon_{12}(x_5 x_{10}+x_6 x_9 +x_{12}). 
\end{alignat*}
So, if $u\in C_{R_u(P_\lambda)}(n_\alpha \sigma)$ then
$x_4=x_5=x_7=x_{8}=x_{10}=x_{11},\; x_6=x_9$. But $\langle (\alpha+\gamma)^{\vee}, \alpha+\beta \rangle =2$, so $x_5=0$ for $u\in C_{R_u(P_\lambda)}(M)$. Then 
\begin{equation*}
(n_\alpha \sigma)\cdot u = \epsilon_6(x_6)\epsilon_9(x_6)\epsilon_{12}(x_6^2+x_{12}).
\end{equation*}
So we must have $x_6^2=0$ if $u\in C_{R_u(P_\lambda)}(M)$. Thus we conclude that $C_{R_u(P_\lambda)}(M)=U_{12}$. Similarly, we can show that $C_{R_u(P_{\lambda}^{-})}(M)=U_{-12}$. Now we show that $C_L(M)<G_{12}$. In the proof of Proposition~\ref{firstmain} we have shown that $(\gamma+\delta)^{\vee}(\overline k^*)$ is contained in $M$. So $C_{L_\lambda}(M)$ is contained in $C_{L_\lambda}((\alpha+\gamma)^{\vee}(\overline k^*),\; (\gamma+\delta)^{\vee}(\overline k^*))$. A direct computation shows that $C_{L_\lambda}((\alpha+\gamma)^{\vee}(\overline k^*),\; (\gamma+\delta)^{\vee}(\overline k^*)) = T$ and $C_T(n_\alpha \sigma)=(\alpha+2\beta+\gamma+\delta)^{\vee}(\overline k^*)<G_{12}$. We are done.
\end{proof}

Note that
\begin{equation*}
v(\sqrt a)^{-1}\cdot H = \langle n_\alpha \sigma, \; (\alpha+\gamma)^{\vee}(\overline k^*),\; \epsilon_{11}(1)\epsilon_{2}(\sqrt a)\rangle.
\end{equation*}
Thus we see that $v(\sqrt a)^{-1}\cdot H$ is contained in $P_\lambda$. So $H$ is contained in $v(\sqrt a)\cdot P_\lambda$.

\begin{lem}\label{uniquepara}
$v(\sqrt a)\cdot P_\lambda$ is the unique proper $R$-parabolic subgroup of $G$ containing $H$.
\end{lem}
\begin{proof}
Suppose that $P_\mu$ is a proper $R$-parabolic subgroup containing $v(\sqrt a)^{-1}\cdot H$. We show that $P_\mu = P_\lambda$. In the proof of Proposition~\ref{firstmain} we have shown that $M$ is $G$-cr. Then there exists an $R$-Levi subgroup $L$ of $P_\mu$ containing $M$ since $M$ is contained in $P_\mu$. Since $R$-Levi subgroups of $P_\mu$ are $R_u(P_\mu)$-conjugate by~\cite[Lem.~2.5(\rmnum{3})]{Bate-uniform-TransAMS}, without loss, we can assume $L=L_\mu$. Then $M<L_\mu=C_G(\mu(\overline k^*))$, so $\mu(\overline k^*)$ centralizes $M$.

Since $\lambda(\overline k^*)$ centralizes $M$, Lemma~\ref{centralizerofM} yields $\mu(\overline k^*)<G_{12}$. Then we can take $\mu=g\cdot (\alpha+2\beta+\gamma+\delta)^{\vee}$ for some $g\in G_{12}$. (This does not change $P_\mu$ or $L_\mu$.) By the Bruhat decomposition, $g$ has one of the following forms:
\begin{align}
g &=(\alpha+2\beta+\gamma+\delta)^{\vee}(s)\epsilon_{12}(x_1), \label{eqn1}\\
g &=\epsilon_{12}(x_1)n_{\alpha+2\beta+\gamma+\delta}(\alpha+2\beta+\gamma+\delta)^{\vee}(s)\epsilon_{12}(x_2) \label{eqn2}
\end{align}
for some $x_1$, $x_2 \in \overline k$, $s\in \overline k^*$. %
We rule out the second case. Suppose $g$ is of the second form. Note that $\epsilon_{11}(1)\epsilon_{2}(\sqrt a)\in v(\sqrt a)^{-1}\cdot H< P_\mu$. 
But $P_\mu=P_{g\cdot (\alpha+2\beta+\gamma+\delta)^{\vee}}=g\cdot P_{(\alpha+2\beta+\gamma+\delta)^{\vee}}$. So it is enough to show that $g^{-1}\cdot (\epsilon_{11}(1)\epsilon_{2}(\sqrt a))\notin P_{(\alpha+2\beta+\gamma+\delta)^{\vee}}$. Since $U_{12}$ and $(\alpha+2\beta+\gamma+\delta)(\overline k^*)$ are contained in $P_{(\alpha+2\beta+\gamma+\delta)^{\vee}}$ we can assume $g=n_{12}$. We have
\begin{equation*}
n_{12}=n_\alpha n_\beta n_\alpha n_\gamma n_\beta n_\alpha n_\delta n_\beta n_\alpha n_\gamma n_\beta n_\delta \textup{ (the longest element in the Weyl group of $D_4$)}.
\end{equation*}
Using this, we can compute how $n_{12}$ acts on each root subgroup of $G$. In particular $n_{12}^{-1}\cdot U_{11}=U_{-12}$ and $n_{12}^{-1}\cdot U_{2}= U_{-2}$. Thus
\begin{alignat*}{2}
n_{12}^{-1}\cdot (\epsilon_{11}(1)\epsilon_{2}(\sqrt a)) &= \epsilon_{-12}(1)\epsilon_{-2}(\sqrt a)\notin P_{(\alpha+2\beta+\gamma+\delta)^{\vee}}.
\end{alignat*}
So $g$ must be of the first form as claimed. Then $g\in P_\lambda$. Thus $P_\mu=P_{g\cdot \lambda}=g\cdot P_\lambda=P_\lambda$. We are done.
\end{proof}

\begin{lem}\label{nonkdefined}
$v(\sqrt a)\cdot P_\lambda$ is not $k$-defined.
\end{lem}
\begin{proof}
Suppose the contrary. Then $(v(\sqrt a)\cdot P_\lambda)^{\circ}$ is $k$-defined. Since $P_\lambda^{\circ}$ is $k$-defined, $v(\sqrt a)\cdot P_\lambda$ is $G^{\circ}(k)$-conjugate to $P_\lambda^{\circ}$ by~\cite[Thm.~20.9]{Borel-AG-book}. Thus $g v(\sqrt a)\cdot P_\lambda^{\circ}$ for some $g\in G(k)^{\circ}$. So $g v(\sqrt a)\in P_\lambda^{\circ}$ since parabolic subgroups are self-normalizing. Then $g=pv(\sqrt a)^{-1}$ for some $p\in P_\lambda^{\circ}$. Thus $g$ is a $k$-point of $P_\lambda^{\circ} R_u(P_\lambda^{-})$. Then by the rational version of the Bruhat decomposition~\cite[Thm.~21.15]{Borel-AG-book}, there exists a unique $p'\in P_\lambda^{\circ}(k)$ and a unique $u'\in R_u(P_\lambda^{-})(k)$ such that $g=p' u'$. This is a contradiction since $v(\sqrt a)\notin R_u(P_\lambda^{-})(k)$. 
\end{proof}

\begin{lem}\label{nonG-cr}
$H$ is not $G$-cr. 
\end{lem}
\begin{proof}
We have shown that $C_G(M)^{\circ}=G_{12}$. Then $C_G(v(\sqrt a)^{-1}\cdot H)^{\circ}<G_{12}$ since $M<v(\sqrt a)^{-1}\cdot H$. Using the commutation relations, we see that $U_{12}< C_G(v(\sqrt a)^{-1}\cdot H)$. Note that $v(\sqrt a)^{-1}\cdot H$ contains $h:=\epsilon_{11}(1)\epsilon_{2}(\sqrt a)$ which does not commute with any non-trivial element of $U_{-12}$. Also, since $\langle \alpha+\beta+\gamma+\delta, \lambda\rangle = 4$,  $h$ does not commute with any non-trivial element of $(\alpha+2\beta+\gamma+\delta)^{\vee}(\overline k^*)$.
Thus we conclude that $C_G(v(\sqrt a)^{-1}\cdot H)^{\circ}=U_{12}$. So $C_G(H)^{\circ}=v(\sqrt a)\cdot U_{12}$ which is unipotent. Then by the classical result of Borel and Tits~\cite[Prop.~3.1]{Borel-Tits-unipotent-invent}, we see that $C_G(H)^{\circ}$ is not $G$-cr.
Since $C_G(H)^{\circ}$ is a normal subgroup of $C_G(H)$, by~\cite[Ex.~5.20]{Bate-uniform-TransAMS}, $C_G(H)$ is not $G$-cr. Then by~\cite[Cor.~3.17]{Bate-geometric-Inventione}, $H$ is not $G$-cr. 
\end{proof}

Now Lemmas~\ref{uniquepara},~\ref{nonkdefined} show that $H$ is $G$-ir over $k$, and Lemma~\ref{nonG-cr} shows that $H$ is not $G$-cr. This proves Proposition~\ref{secondmain}. 

\begin{rem}
Now we have a collection of examples of subgroups of $G$ that are $G$-cr over $k$ but not $G$-cr (or vice versa) for connected $G$ of type $G_2$, $E_6$, $E_7$, $E_8$, and for non-connected $G$ of type $A_2$, $A_4$, $D_4$ (\cite{Bate-separability-TransAMS},~\cite{Uchiyama-Nonperfect-pre},~\cite{Uchiyama-Classification-pre}, and~\cite{Uchiyama-Separability-JAlgebra}) all in characteristic $2$. It would be interesting to find such examples in characteristic $3$ (or any other bad characteristic).   
\end{rem}

In general, combining~\cite[Thm.~1.5]{Bate-cocharacter-Arx} and Proposition~\ref{geometric} we have
\begin{prop}\label{separableG-cr}
Let $k$ be nonperfect. Suppose that $H$ is a separable $k$-subgroup of $G$. If $H$ is $G$-cr, then it is $G$-cr over $k$.
\end{prop}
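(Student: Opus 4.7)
The plan is to translate the statement into the GIT language using Proposition~\ref{geometric}, reducing the assertion to a known result on cocharacter closedness for separable stabilizers.

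First, I would choose generators so that $H = \langle h_1, \ldots, h_n \rangle$ and set $\mathbf{h} = (h_1, \ldots, h_n) \in G^n$, on which $G$ (and $G(k)$) acts by simultaneous conjugation. Because $H$ is $G$-cr, Proposition~\ref{geometric} immediately yields that the orbit $G \cdot \mathbf{h}$ is Zariski closed in $G^n$.

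Next, I would identify the scheme-theoretic stabilizer of $\mathbf{h}$ in $G$ with the scheme-theoretic centralizer $C_G(H)$: since the $h_i$ generate $H$, an element (or test-point) commutes with $\mathbf{h}$ under simultaneous conjugation exactly when it centralizes all of $H$. The hypothesis that $H$ is separable in $G$ says precisely that this scheme-theoretic centralizer is smooth, i.e. the stabilizer $G_{\mathbf{h}}$ is separable in the sense of~\cite{Bate-cocharacter-Arx}.

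Now I would invoke~\cite[Thm.~1.5]{Bate-cocharacter-Arx}, which asserts that for an affine $G$-variety $V$ and $v \in V$ whose scheme-theoretic stabilizer is smooth, Zariski closedness of $G \cdot v$ implies cocharacter closedness of $G(k) \cdot v$ over $k$. Applied to $V = G^n$ and $v = \mathbf{h}$, this produces that $G(k) \cdot \mathbf{h}$ is cocharacter closed over $k$. Finally, the second half of Proposition~\ref{geometric} translates this back into the statement that $H$ is $G$-cr over $k$, completing the proof.

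The only genuinely non-routine step is verifying that separability of $H$ in $G$, defined via $\dim \mathrm{Lie}(C_G(H)) = \dim \mathfrak{c}_\mathfrak{g}(H)$, coincides with smoothness of the stabilizer scheme of $\mathbf{h}$ under simultaneous conjugation; but this is immediate from the definition since the stabilizer functor of $\mathbf{h}$ is exactly the centralizer functor of $H$ (as $H$ is generated by the entries of $\mathbf{h}$), so the two scheme-theoretic objects agree and hence have the same Lie algebra and the same notion of smoothness. The remainder is a direct citation of~\cite[Thm.~1.5]{Bate-cocharacter-Arx} and Proposition~\ref{geometric}, with no further calculation required.
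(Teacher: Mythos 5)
Your proposal is correct and follows essentially the same route as the paper, which proves this proposition precisely by combining Proposition~\ref{geometric} with~\cite[Thm.~1.5]{Bate-cocharacter-Arx}; your extra remarks identifying the scheme-theoretic stabilizer of $\mathbf{h}$ with the scheme-theoretic centralizer of $H$ (so that separability of $H$ gives the smoothness hypothesis needed in that theorem) simply make explicit what the paper leaves implicit.
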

Our examples of subgroups $H$ of $G$ in~\cite{Uchiyama-Nonperfect-pre} (and the $D_4$ example in this paper) for the other direction ($G$-cr over $k$ but not $G$-cr) are all nonseparable. So it is natural to conjecture that the other direction of Proposition~\ref{separableG-cr} holds if $H$ is separable. However, there exists a separable such subgroup in $G=PGL_2$: 
\begin{example}\label{PGL-nonplongeable}
Let $k$ be a nonperfect field of characteristic $p=2$. Let $a\in k\backslash k^2$. Let $G=PGL_2$. We write $\bar A$ for the image in $PGL_2$ of $A\in GL_2$. Set $u= \overline{\left[\begin{array}{cc}
                        0 & a \\
                        1 & 0 \\
                         \end{array}  
                        \right]}\in G(k)$. Let $U:=\langle u \rangle$. Then $U$ is unipotent, so by the classical result of Borel-Tits~\cite[Prop.~3.1]{Borel-Tits-unipotent-invent} $U$ is not $G$-cr. However $U$ is not contained in any proper $k$-parabolic subgroup of $G$ since there is no nontrivial $k$-defined flag of $\mathbb{P}^1_k$ stabilized by $U$. So $U$ is $G$-ir over $k$, hence $G$-cr over $k$. Also the argument in~\cite[Ex.~7.6]{Bate-cocharacter-Arx} shows that $U$ is separable in $G$. 
\end{example}
The element $u$ above is an example of a \emph{$k$-nonplongeable unipotent element}~\cite{Tits-unipotent-Utrecht}. 
\begin{defn}
A unipotent element $u$ of $G$ is \emph{$k$-nonplongeable unipotent} if $u$ is not contained in the unipotent radical of any $k$-defined $R$-parabolic subgroup of $G$. In particular, if $u$ is not contained in any $k$-defined $R$-parabolic subgroup of $G$, $u$ is \emph{$k$-anisotropic unipotent}. 
\end{defn}
Note that our definition of $k$-nonplongeability (and $k$-anisotropy) extends the original definition of Tits to non-connected $G$. Now we ask:
\begin{question}\label{Ascent}
Let $k$ be nonperfect and $G^{\circ}$ be simply-connected. Let $H$ be a $k$-subgroup of $G$. Suppose that every unipotent element of $G(k)$ is $k$-plongeable and $H$ is separable in $G$. Then if $H$ is $G$-cr over $k$, is it $G$-cr?
\end{question} 
\begin{rem}
The simply-connectedness assumption for $G^{\circ}$ is necessary to avoid an easy counterexample to the question: just copy Example~\ref{PGL-nonplongeable} but in $G=GL_2$ instead of $PGL_2$. Every subgroup is separable and $k$-plongeable in $GL_2$. An easy computation shows that the subgroup $U$ of $G=GL_2$ is $G$-cr over $k$ but not $G$-cr. If every unipotent element of $G^{\circ}(k)$ is $k$-plongeable (in particular this holds if $[k:k^p]\leq p$ and $G^{\circ}$ is simply-connected by a deep result of Gille in Galois cohomology~\cite{Gille-unipotent-Duke}) and if a connected $k$-subgroup $H$ of $G^{\circ}$ is $G^{\circ}$-cr over $k$, then $H$ is \emph{pseudo-reductive}~\cite[Thm.~1.9]{Uchiyama-Nonperfect-pre}; see~\cite[Def.~1.1.1]{Conrad-pred-book} for the definition of pseudo-reductivity. The proof of~\cite[Thm.~1.9]{Uchiyama-Nonperfect-pre} depends on the center conjecture. Question~\ref{Ascent} is closely related to the so-called ``strengthened Hilbert-Mumford theorem'' in GIT; see~\cite[Sec.~5]{Bate-cocharacter-Arx}. 
\end{rem}

\section{Geometric Invariant Theory}
In this section, we generalize Theorems~\ref{Prop1} and~\ref{Prop2} via GIT, and obtain new results (Theorems~\ref{GIT1} and~\ref{GIT2}) concerning $G$- and $G(k)$-orbits in an arbitrary $G$-variety. The proof of Theorems~\ref{GIT1} and~\ref{GIT2} give a new GIT-theoretic proof to Theorems~\ref{Prop1} and~\ref{Prop2} using the geometric characterization of complete reducibility (Proposition~\ref{geometric}). Let $H$ be a subgroup of $G$ such that $H=\langle h_1, \cdots, h_n \rangle$ for some $n\in \mathbb{N}$. Let $\bold h:=(h_1, \cdots, h_n)\in G^n$. Suppose that $G$ (and $G(k)$) acts on $G^n$ via simultaneous conjugation.

\begin{prop}
Suppose that $G\cdot \bold h$ is Zariski closed but $G(k)\cdot \bold h$ is not cocharacter closed over $k$. Let $P$ be a minimal $k$-defined $R$-parabolic subgroup of $G$ containing $H$. Then there exists a unipotent element $u\in R_u(P)(\overline k)$ such that $G(k)\cdot (u\cdot \bold h)=G(k)\cdot (u h_1 u^{-1},\cdots, u h_N u^{-1})$ is cocharacter closed over $k$.
\end{prop}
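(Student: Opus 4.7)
The plan is to obtain this statement as a direct specialization of Theorem~\ref{GIT1} to the affine $G$-variety $V = G^n$ equipped with the simultaneous conjugation action, taking $v = \mathbf{h}$. The hypotheses of Theorem~\ref{GIT1} are then copied verbatim from the hypotheses of the proposition, so the real content of the proof is purely a translation issue: the quantifier over ``minimal $k$-defined $R$-parabolic containing $H$'' in the proposition must be matched with the quantifier over ``minimal $P_\mu$ for $\mu \in \Delta_{\mathbf{h},k}$'' in Theorem~\ref{GIT1}.

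First I would carry out the following basic identification. For $\lambda \in Y_k(G)$, the limit $\lim_{a\to 0}\lambda(a)\cdot\mathbf{h}$ exists if and only if $\lim_{a\to 0}\lambda(a) h_i \lambda(a)^{-1}$ exists for every $i$, which by Definition~\ref{R-parabolic} amounts to $h_i \in P_\lambda$ for every $i$. Since $H = \langle h_1,\ldots,h_n\rangle$, this is equivalent to $H \subseteq P_\lambda$. Consequently
\[
\Delta_{\mathbf{h},k} = \{\lambda \in Y_k(G) \mid H \subseteq P_\lambda\},
\]
and as $\mu$ ranges over $\Delta_{\mathbf{h},k}$ the $R$-parabolic subgroups $P_\mu$ range exactly over the $k$-defined $R$-parabolic subgroups of $G$ containing $H$. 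In particular, for any minimal $k$-defined $R$-parabolic $P$ containing $H$ we may pick $\lambda \in \Delta_{\mathbf{h},k}$ with $P = P_\lambda$ minimal among $\{P_\mu : \mu \in \Delta_{\mathbf{h},k}\}$.

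With this translation established, Theorem~\ref{GIT1} applied to $(V,v) = (G^n,\mathbf{h})$ and to the cocharacter $\lambda$ above immediately produces a unipotent $u \in R_u(P_\lambda)(\overline{k}) = R_u(P)(\overline{k})$ with $G(k)\cdot(u\cdot\mathbf{h})$ cocharacter closed over $k$, which is precisely the required conclusion. There is no serious obstacle here: the proposition is genuinely the tuple-orbit specialization of Theorem~\ref{GIT1}, and once the identification of $\Delta_{\mathbf{h},k}$ with the poset of $k$-defined $R$-parabolics containing $H$ is written down, the rest is automatic. One could alternatively invoke Proposition~\ref{geometric} to translate the hypothesis into ``$H$ is $G$-cr but not $G$-cr over $k$'', apply Theorem~\ref{Prop1} to produce $u \in R_u(P)(\overline{k})$ making $u\cdot H$ $G$-cr over $k$, and translate back via Proposition~\ref{geometric}; this yields the same result via the already-proved statement of Section~3.
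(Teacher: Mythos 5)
Your proposal is correct and matches the paper's own proof, which likewise disposes of this proposition in one line as a translation of Theorem~\ref{Prop1} via Proposition~\ref{geometric}, or alternatively as the special case $(V,v)=(G^n,\mathbf{h})$ of Theorem~\ref{GIT1}. Your explicit identification of $\Delta_{\mathbf{h},k}$ with the set of $k$-cocharacters $\lambda$ satisfying $H\subseteq P_\lambda$ (so that minimal $P_\mu$, $\mu\in\Delta_{\mathbf{h},k}$, are exactly the minimal $k$-defined $R$-parabolic subgroups containing $H$) is a correct and slightly more careful writing-out of the translation the paper leaves implicit.
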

\begin{proof}
This is a translation of Theorem~\ref{Prop1} using Proposition~\ref{geometric}. Alternatively, this follows from~\ref{GIT1} since this is a special case of Theorem~\ref{GIT1}. 
\end{proof}
\begin{prop}
Let $\bold h$ and $H$ be as above. Suppose that $G(k)\cdot \bold h$ is cocharacter closed over $k$ but $G\cdot \bold h$ is not Zariski closed. Let $L$ be a minimal $k$-defined $R$-Levi subgroup of $G$ containing $H$. Then there exists an element $l\in L(\overline k)$ such that $G(k)\cdot (l\cdot \bold h)$ is not cocharacter closed over $k$.
\end{prop}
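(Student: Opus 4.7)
The plan is to derive the proposition as a direct translation of Theorem~\ref{Prop2}(2) through the geometric criterion of Proposition~\ref{geometric}, in exact parallel with the way the preceding proposition reduces to Theorem~\ref{Prop1}.

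First I would reinterpret the hypotheses using Proposition~\ref{geometric}: the assumption that $G(k)\cdot \bold h$ is cocharacter closed over $k$ is equivalent to $H$ being $G$-cr over $k$, and the assumption that $G\cdot \bold h$ is not Zariski closed is equivalent to $H$ failing to be $G$-cr. Thus $H$ satisfies exactly the standing hypothesis of Theorem~\ref{Prop2}.

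Next I would invoke Theorem~\ref{Prop2}(2). Looking inside its proof, the Levi used in part (2) is precisely chosen to be a minimal $k$-defined $R$-Levi subgroup $L$ of $G$ containing $H$, which is exactly the $L$ appearing in the statement under consideration. The theorem therefore produces $l\in L(\overline k)$ such that $l\cdot H$ is not $G$-cr over $k$. Since $l\cdot H=\langle l h_1 l^{-1},\ldots,l h_n l^{-1}\rangle$ is generated by the entries of the tuple $l\cdot \bold h$, a second application of Proposition~\ref{geometric} converts the conclusion ``$l\cdot H$ is not $G$-cr over $k$'' into ``$G(k)\cdot (l\cdot \bold h)$ is not cocharacter closed over $k$,'' as required.

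Alternatively, the statement follows as the special case $V=G^n$ (with the simultaneous conjugation action) of Theorem~\ref{GIT2}(3), mirroring the dual route for the previous proposition through Theorem~\ref{GIT1}. There is no real obstacle: the only point requiring mild attention is the compatibility of the Levi $L$ named in the proposition's statement with the Levi chosen inside the proof of Theorem~\ref{Prop2}(2), and since both are minimal $k$-defined $R$-Levi subgroups of $G$ containing $H$, they match and the translation goes through without difficulty.
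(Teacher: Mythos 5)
Your proposal is correct and matches the paper's own argument, which likewise proves the proposition as a direct translation of Theorem~\ref{Prop2} via Proposition~\ref{geometric} (noting also that it is a special case of Theorem~\ref{GIT2}); you simply spell out the dictionary and the compatibility of the chosen Levi in more detail than the paper does.
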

\begin{proof}
This is a translation of Theorem~\ref{Prop2} via Proposition~\ref{geometric}. Also this is a special case of Theorem~\ref{GIT2}.
\end{proof}

Now it is easy to see that Theorems~\ref{GIT1} and~\ref{GIT2} are natural generalizations of Theorems~\ref{Prop1} and~\ref{Prop2} via GIT. 

\begin{proof}[Proof of Theorem~\ref{GIT1}]
Let $\lambda$, $P_\lambda$, and $v$ as in the hypothesis. Let $v':=\textup{lim}_{a\rightarrow 0}\lambda(a)\cdot v$. Since $G\cdot v$ is Zariski closed, there exists $g\in G(\overline k)$ such that $v':=g\cdot v$. 
Then, by Proposition~\ref{perfectconjugacy} there exists $u\in R_u(P_\lambda)(\overline k)$ such that $v':=u\cdot v$ since $\overline k$ is perfect.  
Our goal is to show that $G(k)\cdot v'$ is cocharacter closed over $k$. Suppose the contrary. Then by Propositions~\ref{G-cr-L-cr} and~\ref{geometric}, $L_\lambda(k)\cdot v'$ is not cocharacter closed over $k$.  
Then there exists a $k$-defined cocharacter $\mu$ of $L_\lambda$ properly destabilizing $v'$. So $\mu(\overline k^*)$ is not central in $L_\lambda$. 
Then $Q:=P_\mu(L_\lambda)\ltimes R_u(P_\lambda)$ is a $k$-defined $R$-parabolic subgroup of $G$ properly contained in $P_\lambda$. Set $Q:=Q_\zeta$ for some $\zeta\in Y_k(G)$. We want to show that $\zeta$ destabilizes $v$ over $k$. By~\cite[Lem.~6.2]{Bate-geometric-Inventione}, we can set $\zeta:=m\lambda+\mu$ for some large $m\in \mathbb{N}$. Note that $\lambda$ and $\mu$ clearly commute since $\mu$ is a cocharacter of $L_\lambda=C_G(\lambda(\overline k^*))$. So, if both $\textup{lim}_{a\rightarrow 0}\lambda(a)\cdot v'$ and $\textup{lim}_{a\rightarrow 0}\mu(a)\cdot v'$ exist, we are done thanks to~\cite[Lem.~2.7]{Bate-cocharacter-Arx}. We have already shown that the second limit exists. Note that $\textup{lim}_{a\rightarrow 0}\lambda(a)\cdot u=1$ since $u\in R_u(P_\zeta)$. So we have
\begin{equation*}
\textup{lim}_{a\rightarrow 0}\lambda(a)\cdot v'=\textup{lim}_{a\rightarrow 0}\lambda(a)\cdot (u\cdot v) = \textup{lim}_{a\rightarrow 0}\lambda(a)\cdot v.
\end{equation*}
The last limit exists by our assumption, so the first limit exists. We are done. 
\end{proof}

\begin{proof}[Proof of Theorem~\ref{GIT2}]
We start with part 1. Let $\lambda$, $P_\lambda$, $v$ be as in the hypothesis. Let $v':=\textup{lim}_{a\rightarrow 0}\lambda(a)\cdot v$. Since $G(k)\cdot v$ is cocharacter closed over $k$, $v'$ is $G(k)$-conjugate to $v$. Then $v'$ is $R_u(P_\lambda)(k)$-conjugate to $v$ by Proposition~\ref{closureconjugacy} since $G(k)$ is cocharacter closed over $k$. Therefore $v'=u\cdot v$ for some $u\in R_u(P_\lambda)(k)$. Then it follows that $v$ is centralized by $u^{-1}\cdot \lambda$ by~\cite[Lem.~2.12]{Bate-uniform-TransAMS}. Let $\xi:=u^{-1}\cdot \lambda$. Then $\xi$ is a $k$-defined cocharacter of $G$ since $u$ is a $k$-point of $G$. Thus $L_\xi$ is a $k$-defined $R$-Levi subgroup of $G$. We have already shown that $\xi$ fixes $v$. It is clear that $P_\lambda=P_\xi$ since $P_\xi=P_{u^{-1}\cdot\lambda}=u^{-1}\cdot P_\lambda=P_{\lambda}$.  

For part 2, suppose that there exists a non-central $k$-defined cocharacter $\mu$ of $L_\xi$ destabilizing $v$ over $k$. Let $Q:=P_\mu(L_\xi)\ltimes R_u(P_\xi)$. By a similar argument to that in the proof of Theorem~\ref{GIT1}, we find a $k$-defined cocharacter of $L_\xi$ destabilizing $v$ over $k$ since both $\xi$ and $\lambda$ destabilize $v$ over $k$. Since $Q$ is a $k$-defined $R$-parabolic subgroup strictly contained in $P_\xi=P_\lambda$ this contradicts the minimality of $P_\lambda$. So we are done. 

For part 3, let $L:=L_\xi$. Since $G\cdot v$ is not Zariski closed, $L\cdot v$ is not Zariski closed by Propositions~\ref{G-cr-L-cr} and~\ref{geometric}. Let $\Delta_{L, v,\overline{k}}$ be the set of $\overline k$-defined cocharacters of $L$ destabilizing $v$. Then by the Hilbert-Mumford theorem~\cite{Kempf-instability-Ann}, $\Delta_{L, v,\overline{k}}$ is non-empty.
Pick $\zeta\in \Delta_{L, v,\overline{k}}$ such that $P_\zeta(L)$ is minimal among those $R$-parabolic subgroups $P_\mu(L)$ of $L$ with $\mu\in \Delta_{L, v,\overline{k}}$.
Since an $R$-parabolic subgroup of $L$ is $L(\overline k)$-conjugate to a $k$-defined $R$-parabolic subgroup of $L$, there exists $l\in L(\overline k)$ such that $l\cdot P_{\zeta}(L)$ is a $k$-defined $R$-parabolic subgroup of $L$. Then there exists a $k$-defined cocharacter $\eta$ of $L$ such that $P_\eta(L)=l\cdot P_\zeta(L)$ since $k$ is separably closed. Then $P_\eta(L)=P_{l\cdot \zeta}(L)$. Without loss of generality, we can assume that $\eta$ is $P_\eta(L)$-conjugate to $l\cdot \zeta$. So set $\eta:=s\cdot (l\cdot \zeta)$ for some $s\in P_\eta(L)$. Then $s=mu$ for some $m\in L_\eta(L)$ and $u\in R_u(P_\eta(L))$. 
Since $\eta$ fixes $m$, we have $\eta=u\cdot l \cdot \zeta$. Then
\begin{equation*}
\textup{lim}_{a\rightarrow 0}\eta(a)\cdot (l\cdot v)=\textup{lim}_{a\rightarrow 0}(u\cdot l\cdot \zeta(a))\cdot (l\cdot v)=ul \cdot (\textup{lim}_{a\rightarrow 0}\zeta(a)\cdot v).
\end{equation*}
We have assumed that the last limit exists, so the first limit also exists. Thus $\eta$ destabilizes $l\cdot v$ over $k$. Suppose that $G(k)\cdot (l\cdot v)$ is cocharacter closed over $k$. Our goal is to obtain a contradiction. Now $L(k)\cdot (l\cdot v)$ is cocharacter closed over $k$ by Propositions~\ref{G-cr-L-cr} and~\ref{geometric}. 
Then by~\cite[Lem.~2.12]{Bate-uniform-TransAMS} there exists $w\in R_u(P_\eta(L))(k)$ such that $w^{-1}\cdot \eta(\overline k^*)$ fixes $l\cdot v$. Since $C:=C_L(w^{-1}\cdot \eta(\overline k^*))$ is a $k$-defined $R$-Levi subgroup (since $w$ is a $k$-point in $L$ and $\eta$ is $k$-defined), by Propositions~\ref{G-cr-L-cr} and~\ref{geometric} again, $C(k)\cdot (l\cdot v)$ is not cocharacter closed over $k$ since $G(k)\cdot (l\cdot v)$ is not cocharacter closed over $k$. Then there exists a $k$-defined cocharacter $\tau$ of $C$ properly destabilizing $l\cdot v$ over $k$. In particular, $\tau(\overline k^*)$ does not fix $l\cdot v$. Thus $P_\tau(C)$ is a proper $k$-defined $R$-parabolic subgroup of $C$. Define $Q:=P_\tau(C)\ltimes R_u(P_\eta(L))$. Then by a similar argument to that of the proof of part 2, we find a $k$-defined cocharacter $\alpha$ of $C$ such that 
$Q=Q_\alpha$ and $\alpha$ destabilizes $l\cdot v$ over $k$ since both $\tau$ and $\eta$ destabilize $l\cdot v$ over $k$. Since $Q$ is a $k$-defined $R$-parabolic subgroup of $L$ strictly contained in $P_\eta(L)$, this contradicts the minimality of $P_L$. We are done.           
\end{proof}

\begin{rem}
Our proofs of Theorems~\ref{GIT1} and~\ref{GIT2} are very similar to those of Theorems~\ref{Prop1} and~\ref{Prop2}. We believe many results on complete reducibility (over $k$) can be generalized in a similar way. Conversely more results on GIT can be used to study complete reducibility; see~\cite{Bate-cocharacter-Arx},~\cite{Bate-uniform-TransAMS} for more on this. 
\end{rem}

\section{Tits' spherical buildings}
In this section, we translate Theorems~\ref{Prop1} and~\ref{Prop2} into the language of spherical buildings~\cite{Tits-book}. This gives a topological viewpoint to the rationality problems for complete reducibility and GIT. We assume that $G$ is connected in this section.  
\begin{defn}
We write $\Delta_k(G)$ (or $\Delta(G)$) for the spherical building corresponding to the set of proper $k$-parabolic subgroups of $G$ (or proper parabolic subgroups of $G$ respectively). It is clear that $\Delta_k(G)$ is a subbuilding of $\Delta(G)$. 
\end{defn}  

\begin{defn}
Let $H$ be a subgroup of $G$, we write $\Delta_k(G)^H$ (or $\Delta(G)^H$) for the subcomplex of $\Delta_k(G)$ (or $\Delta(G)$) corresponding to the set of proper $k$-parabolic subgroups (or proper parabolic subgroups) of $G$ containing $H$.
We call $\Delta_k(G)^H$ (or $\Delta(G)^H$) the fixed point subcomplex of $\Delta_k(G)$ (or $\Delta(G)$) with respect to $H$.
\end{defn}

We recall Serre's characterization of complete reducibility (and complete reducibility over $k$) in terms of a topological property of the corresponding fixed point subcomplexes of $\Delta(G)$ (and $\Delta_k(G)$)~\cite[Sec.~2.2]{Serre-building}:

\begin{prop}\label{serre-G-cr-builidng}
Let $H$ be a subgroup of $G$. Then 
\begin{enumerate}
\item{$H$ is $G$-cr over $k$ if and only if $\Delta_k(G)^H$ is not contractible.}
\item{$H$ is $G$-cr if and only if $\Delta(G)^H$ is not contractible.}
\end{enumerate}
\end{prop}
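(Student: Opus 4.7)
The plan is to deduce both parts of Proposition~\ref{serre-G-cr-builidng} from the center conjecture of Tits for spherical buildings \cite{Tits-colloq}, \cite{Weiss-center-Fourier}, combined with the standard dictionary between parabolic subgroups and simplices of the Tits building.

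First I would verify that $\Delta(G)^H$ is a convex subcomplex of $\Delta(G)$, and analogously that $\Delta_k(G)^H$ is a convex subcomplex of $\Delta_k(G)$. Since the set of (resp.~$k$-defined) parabolic subgroups containing $H$ is closed under intersection and under the natural join operation, every minimal gallery between two chambers of $\Delta(G)^H$ remains inside $\Delta(G)^H$, which is exactly convexity; the $k$-version is identical.

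Second, I would translate the algebraic definition of $G$-cr (over $k$) into the combinatorial condition that every simplex of $\Delta(G)^H$ (resp.~$\Delta_k(G)^H$) admits an opposite simplex within the subcomplex itself. In one direction, if $H \subset P$ and $L$ is a Levi of $P$ containing $H$, then the unique parabolic $P^-$ opposite to $P$ satisfying $P \cap P^- = L$ also contains $H$, providing the required opposite simplex. Conversely, if $P^-\supset H$ is opposite to $P\supset H$, then $L := P \cap P^-$ is a Levi of $P$ containing $H$. The $k$-defined analogue proceeds identically, since the Levi attached to a $k$-defined cocharacter also determines a $k$-defined opposite parabolic.

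Finally, I would apply the center conjecture: a non-empty convex subcomplex $\Sigma$ of a spherical building fails to be contractible precisely when every simplex of $\Sigma$ possesses an opposite simplex lying in $\Sigma$. The easy direction---if every simplex has an opposite, then $\Sigma$ deformation-retracts onto an honest sub-building and so is not contractible---is straightforward, but the converse requires the full strength of the center conjecture. Combining with the previous step yields both claims of the proposition. The principal obstacle is the center conjecture itself, a deep theorem whose final verification was only completed in \cite{Weiss-center-Fourier}; the remaining steps are formal applications of the parabolic/simplex dictionary.
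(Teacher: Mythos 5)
The paper offers no proof of this proposition: it is recalled verbatim from Serre \cite[Sec.~2.2]{Serre-building}, so any proof amounts to reproducing Serre's argument. Your steps 1 and 2 are indeed the standard reduction and are essentially fine: the fixed-point complexes are convex subcomplexes (though your justification via ``closed under intersection'' is loose --- the correct statement is that $\Delta(G)^H$ is the fixed-point set of $H$ acting on $\Delta(G)$, and for $\Delta_k(G)^H$ one uses that $k$ is separably closed so convex hulls computed in $\Delta_k(G)$ agree with those in $\Delta(G)$), and the dictionary ``$H$ lies in a ($k$-defined) Levi of $P$ iff $H$ lies in a ($k$-defined) parabolic opposite to $P$'' is exactly Serre's translation. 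You should also treat the case $\Delta_k(G)^H=\emptyset$ (i.e.\ $H$ $G$-ir over $k$), which your ``non-empty'' hypothesis in the last step excludes; there one uses the convention that the empty complex is a $(-1)$-sphere, hence not contractible.

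The genuine gap is in your final step. The equivalence ``a convex subcomplex of a spherical building is non-contractible if and only if every simplex of it has an opposite inside it'' is Serre's theorem (\cite[Sec.~2]{Serre-building}), \emph{not} the center conjecture: the center conjecture asserts that a non-completely-reducible (equivalently, contractible) convex subcomplex contains a simplex fixed by its full stabilizer in $\mathrm{Aut}(\Delta)$, a different statement which this proposition does not need (in this paper it enters only in results such as Proposition~\ref{paris} and Proposition~\ref{cent}). Worse, you have the easy and hard directions interchanged. The direction ``not completely reducible $\Rightarrow$ contractible'' is the elementary one: if a simplex $s$ of $\Sigma$ has no opposite in $\Sigma$, then no point of $\Sigma$ is antipodal to the barycenter of $s$, so the geodesic contraction toward that barycenter (well defined since geodesics of length $<\pi$ in the CAT(1) realization are unique, and staying in $\Sigma$ by convexity) contracts $\Sigma$; no deep input is required. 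The substantive direction is ``completely reducible $\Rightarrow$ not contractible'', and your one-line justification that $\Sigma$ ``deformation-retracts onto an honest sub-building'' is not an argument --- the actual point is Serre's result that a completely reducible convex subcomplex is itself a spherical building (or at least has nonvanishing reduced homology in top degree), whence non-contractibility follows from Solomon--Tits. As written, the central step of your proof rests on a misattributed theorem with the logical weight placed on the wrong side; the repair is simply to quote Serre's topological criterion directly, which is all the paper does.
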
 

We are ready to state our main results in this section.

\begin{prop}\label{building1}
Let $k$ be a nonperfect separably closed field. Let $H$ be a subgroup of $G$. Suppose that $\Delta_k(G)^H$ is contractible but $\Delta(G)^H$ is not contractible. Let $s$ be a simplex in $\Delta_k(G)^H$ of maximal dimension. Let $P$ be the $k$-parabolic subgroup of $G$ corresponding to $s$. Then there exists a unipotent element $u\in R_u(P)(\overline k)$ such that $\Delta_k(G)^{u\cdot H}$ is not contractible.
\end{prop}
\begin{proof}
This follows from Theorem~\ref{Prop1} and Proposition~\ref{serre-G-cr-builidng}.
\end{proof}

\begin{prop}\label{building2}
Let $k$ be a nonperfect separably closed field. Let $H$ be a subgroup of $G$. Suppose that $\Delta_k(G)^H$ is not contractible but $\Delta(G)^H$ is contractible. Let $s$ be a simplex of $\Delta_k(G)^H$ of maximal dimension. Let $P$ be the $k$-parabolic subgroup corresponding to $s$. Then the following hold:
\begin{enumerate}
\item{There exists a $k$-Levi subgroup $L$ of $P$ such that $\Delta_k(L)^H=\emptyset$.}
\item{There exists an element $l\in L(\overline k)$ such that $\Delta_k(G)^{l\cdot H}$ is contractible.}
\end{enumerate}
\end{prop}
\begin{proof}
This follows from Theorem~\ref{Prop2} and Proposition~\ref{serre-G-cr-builidng}.
\end{proof}

\begin{rem}
It would be very interesting to find purely building-theoretic (geometric-topological) proofs of Propositions~\ref{building1} and~\ref{building2}.
\end{rem}
\section{Centralizers and normalizers of $G$-cr subgroups}

In this section, we assume $G$ is connected. Our proof of Theorem~\ref{main} is purely combinatorial and just depends on the structure of the set of roots of $G$ and the corresponding root subgroups of $G$. 

\begin{proof}[Proof of Theorem~\ref{main}]
Suppose that $H$ is $G$-cr over $k$ but $C_G(H)$ is not. Then by~\cite[Prop.~3.3]{Uchiyama-Nonperfect-pre} (which, in turn, depends on the recently proved center conjecture of Tits), there exists a proper $k$-parabolic subgroup of $G$ containing $HC_G(H)$. Let $P$ be a minimal such $k$-parabolic subgroup. Let $P=P_\lambda$ for some $\lambda\in Y_k(G)$. Since $H$ is $G$-cr over $k$, there exists a $k$-Levi subgroup $L$ of $P_\lambda$ containing $H$. Since $k$-Levi subgroups of $P$ are $R_u(P)(k)$-conjugate by~\cite[Lem.~2.5(\rmnum{3})]{Bate-uniform-TransAMS}, we may assume $L=L_\lambda$. Then $\lambda(\overline k^*)<C_G(H)$ since $L_\lambda=C_G(\lambda(\overline k^*))$. 

Suppose that there exists $1\neq u\in R_u(P_\lambda)\cap C_G(H)$. Define $w:=\overline{w_{0,L}}\circ\overline{w_{0,G}}$. Then 
\begin{equation*}
w\cdot(R_u(P_{\lambda}))=\overline{w_{0,L}}\cdot (\overline{w_{0,G}}\cdot R_u(P_{\lambda}))=\overline{w_{0,L}}\cdot (R_u(P_{-\lambda}))=R_u(P_{-\lambda}).
\end{equation*}
Note that for any root $i\in \Psi(L)$ we have 
\begin{equation*}
w\cdot i=\overline{w_{0,L}}\cdot(\overline{w_{0,G}}\cdot i)=\overline{w_{0,L}}\cdot (-i) = i.
\end{equation*}
Then $w$ centralizes $[L,L]$. Since $H$ is semisimple, $H<[L,L]$ so $w$ also centralizes $H$. Thus 
\begin{equation*}
w\cdot (C_G(H))=C_G(w\cdot H)=C_G(H).
\end{equation*}
So we have $1\neq w\cdot u\in R_u(P_{-\lambda})\cap C_G(H)$. This is a contradiction since $C_G(H)<P_\lambda$. So $R_u(P_\lambda)\cap C_G(H)=1$. Thus $HC_G(H)<L$. Since  $C_G(H)$ is not $G$-cr over $k$, it is not $L$-cr over $k$ by Proposition~\ref{G-cr-L-cr}. Then there exists a proper $k$-parabolic subgroup $P_L$ of $L$ containing $HC_G(H)$ by~\cite[Prop~3.3]{Uchiyama-Nonperfect-pre}. Then $P_L\ltimes R_u(P_\lambda)$ is a $k$-parabolic subgroup of $G$ containing $HC_G(H)$ and strictly contained in $P_\lambda$. This contradicts the minimality of $P_\lambda$. Hence $C_G(H)$ is $G$-cr over $k$.
\end{proof}

\begin{rem}\label{remOnAssumptions}
The condition on $\overline{w_{0,L}}$ in the hypothesis of Theorem~\ref{main} is necessary for our proof to work since otherwise the operation of $\overline{w_{0,L}}$ on $R_u(P_{\lambda})$ might not make sense. Consider the following case: $G=A_3$, $\Psi(G)=\{\pm \alpha, \pm \beta, \pm \gamma, \pm(\alpha+\beta), \pm(\beta+\gamma), \pm(\alpha+\beta+\gamma) \}$, $L=L_{\alpha\beta}$. Then $\overline{w_{0,L}}=n_\beta n_\alpha n_\beta \sigma$ where $n_\alpha$, $n_\beta$ are reflections corresponding to $\alpha$, $\beta$ , and $\sigma$ is the non-trivial graph automorphism of $L=A_2$ respectively. We see that $\beta+\gamma\in R_u(P_{\alpha\beta})$ but $\sigma$ cannot be applied to $\beta+\gamma$. 
\end{rem}

Replacing $C_G(H)$ with $N_G(H)$ in the proof of Theorem~\ref{main}, we obtain

\begin{thm}\label{main2}
Let $G$ be connected. Suppose that a semisimple $k$-subgroup $H$ of $G$ is $G$-cr over $k$. Let $P$ be minimal among $k$-parabolic subgroups containing $N_G(H)$, and $L$ be a $k$-Levi subgroup of $P$. If the automorphism $\overline{w_{0,L}}$ of $L$ extends to an automorphism of $G$ and $\overline{w_{0,L}}$ stabilizes $R_u(P)$, then $N_G(H)$ is $G$-cr over $k$. 
\end{thm}

For the converse of Theorem~\ref{main2}, we have 

\begin{prop}
Let $G$ be connected. Let $H$ be a (not necessarily $k$-defined) subgroup of $G$. Suppose that $N_G(H)$ is $G$-cr over $k$. If $N_G(H)$ is defined over $k$, then $H$ is $G$-cr over $k$.
\end{prop}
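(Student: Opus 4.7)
The plan is to argue by contradiction and induction on $\dim G$: suppose $H$ is not $G$-cr over $k$, and use the Tits center conjecture applied to the fixed-point subcomplex $\Delta_k(G)^H$ of $\Delta_k(G)$ to produce a proper $k$-parabolic $P$ containing both $H$ and $N_G(H)$, then exploit the hypothesis that $N_G(H)$ is $G$-cr over $k$ to descend into a proper $k$-Levi of $P$.

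Concretely, assume for contradiction that $H$ is not $G$-cr over $k$. By Proposition~\ref{serre-G-cr-builidng}, the subcomplex $\Delta_k(G)^H$ is contractible and non-empty. Since $N_G(H)$ normalizes $H$, the group $N_G(H)(k)$ acts on $\Delta_k(G)^H$ as a group of simplicial automorphisms (it permutes $k$-parabolic subgroups containing $H$). By the center conjecture of Tits (now a theorem), there is a simplex $s$ of $\Delta_k(G)^H$ fixed by this action; let $P$ be the corresponding proper $k$-parabolic subgroup of $G$. Then $P$ contains $H$, and since $P$ is self-normalizing, $N_G(H)(k) \subseteq P$; by Zariski density of $N_G(H)(k)$ in the $k$-defined (hence smooth) group $N_G(H)$ over the separably closed field $k$, we conclude $N_G(H) \subseteq P$.

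Because $N_G(H)$ is $G$-cr over $k$ and $N_G(H) \subseteq P$, there is a $k$-defined $R$-Levi subgroup $L$ of $P$ with $N_G(H) \subseteq L$; in particular $H \subseteq L$ and $N_L(H) = N_G(H)$, which is $k$-defined and $L$-cr over $k$ by Proposition~\ref{G-cr-L-cr}. Since $P$ is proper, $\dim L < \dim G$, so the inductive hypothesis applied to $H$ inside the connected reductive $k$-group $L$ yields that $H$ is $L$-cr over $k$. A final application of Proposition~\ref{G-cr-L-cr} then forces $H$ to be $G$-cr over $k$, contradicting our assumption. The base case, where $G$ admits no proper $k$-parabolic subgroup (for instance $G$ a torus), is immediate since every subgroup is then trivially $G$-cr over $k$.

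The main obstacle I anticipate is justifying the invocation of the center conjecture in this setting: one needs to check that $\Delta_k(G)^H$ is a convex subcomplex of $\Delta_k(G)$, that the $N_G(H)(k)$-action preserves this subcomplex and acts by building automorphisms, and that the density step passing from $N_G(H)(k) \subseteq P$ to $N_G(H) \subseteq P$ is rigorous. Each of these is standard under our hypotheses (connectedness of $G$, smoothness of $N_G(H)$, separable closure of $k$), but the dependence on the center conjecture is where the essential topological input enters, and writing this out carefully is the delicate part.
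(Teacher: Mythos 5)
Your argument is correct, but it does not follow the paper's route: the paper disposes of this proposition in one line by quoting \cite[Prop.~3.7]{Uchiyama-Nonperfect-pre}, remarking only that the $k$-definedness of $N_G(H)$ is needed to invoke that result, whereas you reprove the relevant descent statement directly. Your proof is the classical ``normal subgroups of completely reducible subgroups are completely reducible'' argument transported to the rational setting: Serre's criterion (Proposition~\ref{serre-G-cr-builidng}) turns the assumed failure of $H$ to be $G$-cr over $k$ into contractibility of the convex subcomplex $\Delta_k(G)^H$; the center conjecture gives a proper $k$-parabolic $P$ containing $H$ and normalized by $N_G(H)(k)$; density of rational points (this is precisely where $k$-definedness and smoothness of $N_G(H)$ together with $k=k_s$ enter, matching the paper's remark about why $k$-definedness is needed) upgrades this to $N_G(H)\le P$; and then the $G$-cr over $k$ hypothesis, Proposition~\ref{G-cr-L-cr}, and induction on $\dim G$ applied inside a $k$-defined Levi $L$ of $P$ finish the contradiction. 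What your route buys is a self-contained argument (in effect proving the more general fact that a normal subgroup of a $k$-defined subgroup that is $G$-cr over $k$ is again $G$-cr over $k$, specialized to $M=N_G(H)$) that makes explicit where each hypothesis is used; what it costs is the building-theoretic overhead you yourself flag (convexity of $\Delta_k(G)^H$, the action of $N_G(H)(k)$ by building automorphisms, thickness), all of which is standard here since $k$ is separably closed and $G$ is split, and which the paper outsources to the cited proposition --- whose own proof, judging from the paper's parallel use of \cite[Prop.~3.3]{Uchiyama-Nonperfect-pre}, also rests on the center conjecture, so the underlying ideas very likely coincide even though your write-up is the only place they appear explicitly.
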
 
\begin{proof}
This is a consequence of \cite[Prop.~3.7]{Uchiyama-Nonperfect-pre}. Note that $N_G(H)$ needs to be $k$-defined to apply \cite[Prop.~3.7]{Uchiyama-Nonperfect-pre}.
\end{proof}
\section{On the number of conjugacy classes}
Let $n$ be a natural number. Let $G$ be a (possibly non-connected) reductive group defined over an algebraically closed field $k$ of characteristic $p\geq 0$. Suppose that $M$ is a (possibly non-connected) reductive subgroup of $G$. Using Richardson's beautiful tangent space argument~\cite{Richardson-Conjugacy-Ann}, Slodowy~\cite{Slodowy-book} showed that if $G$ and $M$ are connected and $p$ is very good for $G$, then $G\cdot (m_1,\cdots, m_n)\cap M^n$ is a finite union of $M$-conjugacy classes where $m_i\in M$ and $G$ acts on $G^n$ via simultaneous conjugation. Moreover Guralnick showed that the same result holds for non-connected $G$ and $M$ for any $p$ if $n=1$~\cite[Thm.~1.2]{Guralnick-conjugacy-procAMS}. 

The following Theorem~\ref{conjugacy} shows two things: even if $p$ is very good for $G^{\circ}$, 1)~Slodowy's result fails if $G$ and $M$ are non-connected, 2)~Guralnick's result fails if $n>1$. Theorem~\ref{conjugacy} works because if $G$ is non-connected of type $A_2$, there exists a nonseparable subgroup of $G$ even if $p$ is very good for $G^{\circ}$. 

\begin{thm}\label{conjugacy}
Let $k$ be an algebraically closed field of characteristic $2$. Let $G=SL_3 \rtimes \langle \sigma \rangle$ be defined over $k$ where $\sigma$ is a non-trivial graph automorphism of $SL_3$.
Then there exists a (non-connected) reductive subgroup $M$ of $G$ and a pair $(m_1, m_2)\in M^2$ such that $G\cdot (m_1, m_2) \cap M^2$ is an infinite union of $M$-conjugacy classes. 
\end{thm}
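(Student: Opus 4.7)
The plan is to exploit the existence of a nonseparable $k$-subgroup of $G = SL_3 \rtimes \langle \sigma \rangle$ in characteristic $2$, which arises despite $p = 2$ being very good for $SL_3$ precisely because $G$ is non-connected. The construction parallels the $D_4$ example in Section~4 and the $A_2$ construction in the author's earlier paper~[Uchiyama-Separability-JAlgebra], with the triality of $D_4$ replaced here by the order-$3$ rotation of the $A_2$ root system induced by the coset $n_\alpha \sigma$.

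First, I would build an explicit $k$-subgroup $H := \langle m_1, m_2 \rangle$ of $G$ that is nonseparable in $G$, with generators of the form $m_1 = n_\alpha \sigma$ (or a suitable $U_{\alpha+\beta}$-translate thereof) and $m_2 = (\alpha+\beta)^\vee(t)$ for a generic $t \in \overline{k}^*$. Nonseparability would be witnessed by the one-parameter curve $u(x) := \epsilon_{\alpha+\beta}(x) \in U_{\alpha+\beta}(\overline{k})$: since $\sigma$ fixes $\alpha+\beta$ and we are in characteristic $2$, a direct commutator calculation using the Chevalley relations will show that the tangent vector $e_{\alpha+\beta}$ lies in $\mathfrak{c}_{\mathfrak{g}}(H)$ while $u(x) \notin C_G(H)$ for generic $x$, mirroring Remark~\ref{D4nonsep}. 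Next, I would take $M$ to be the non-connected reductive subgroup $M := L_\mu = T \rtimes \langle \sigma \rangle$ for a $\sigma$-invariant cocharacter $\mu$ (such as $\mu = (\alpha+\beta)^\vee$), noting that $M$ is reductive, non-connected, and contains the pair $(m_1, m_2)$.

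The heart of the argument is producing the infinite family. For each $x \in \overline{k}$ define
\[
(m_1(x), m_2(x)) := u(x) \cdot (m_1, m_2) = \bigl(u(x) m_1 u(x)^{-1},\; u(x) m_2 u(x)^{-1}\bigr).
\]
All these lie in the single $G$-orbit $G \cdot (m_1, m_2)$. The crucial verification is that the entire curve lies in $M^2$: expanding both coordinates via the Chevalley commutator formula and using that $m_2$ centralizes $U_{\alpha+\beta}$ while $m_1$ permutes the positive root subgroups through the action $\alpha \mapsto \alpha+\beta \mapsto \beta \mapsto \alpha$, the characteristic-$2$ cancellations force the $U_{\alpha+\beta}$-error terms to sum to zero at the level of $M$. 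Finally, to show these pairs represent infinitely many distinct $M$-orbits, suppose $(m_1(x), m_2(x))$ is $M$-conjugate to $(m_1(y), m_2(y))$ via some $\mu \in M$. Then $w := u(y)^{-1} \mu u(x) \in C_G(m_1, m_2) = C_G(H)$. A direct root-subgroup analysis of $C_G(H)$, using that $H$ is nonseparable and hence $C_G(H)$ is strictly smaller than what $\mathfrak{c}_{\mathfrak{g}}(H)$ would suggest, rules out the existence of such $\mu \in M$ for generic $x \neq y$.

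The main obstacle is the third step: verifying by explicit commutation that $u(x) \cdot (m_1, m_2)$ lies in $M^2$ for \emph{every} $x \in \overline{k}$, not just infinitesimally. This is a delicate calculation analogous to the proof of Proposition~\ref{firstmain}, where the $D_4$ triality was leveraged to cancel the $U_{12}$-terms. Here the analogue is the order-$3$ rotation induced by $n_\alpha \sigma$ on the positive roots of $A_2$, combined with the Frobenius additivity in characteristic $2$. Once the family is inside $M^2$, the non-conjugacy step is a relatively standard exercise, tracing through the centralizer calculation the way Richardson's tangent space argument fails in the nonseparable setting, thereby providing the first explicit counterexample to both Slodowy's and Guralnick's finiteness results in the non-connected/higher-tuple setting.
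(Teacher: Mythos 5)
Your overall strategy is the right one --- conjugate a fixed pair by a one-parameter unipotent curve attached to a nonseparable subgroup, so that the resulting family stays in a single $G$-orbit, lands in $M^2$, but breaks into infinitely many $M$-classes --- and this is indeed how the paper argues. However, your specific choices make the central containment step fail, and the nonseparability witness is misidentified. First, the curve $u(x)=\epsilon_{\alpha+\beta}(x)$ is not a witness to nonseparability: since $\sigma$ fixes $\alpha+\beta$ and signs vanish in characteristic $2$, the whole curve $U_{\alpha+\beta}$ already centralizes $\sigma$, and if you include $m_1=n_\alpha\sigma$ in $H$ then even the tangent vector $e_{\alpha+\beta}$ fails to lie in $\mathfrak{c}_{\mathfrak{g}}(H)$, because $n_\alpha\sigma$ sends $\alpha+\beta$ to $\beta$ (so $e_{\alpha+\beta}\mapsto \pm e_\beta$). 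The correct witness, as in the paper, is the curve $v(x)=\epsilon_\alpha(x)\epsilon_\beta(x)$ against $H=\langle\sigma\rangle$: its tangent $e_\alpha+e_\beta$ is $\sigma$-fixed, while $\sigma\cdot v(x)=v(x)\epsilon_{\alpha+\beta}(x^2)\neq v(x)$.

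Second, and more seriously, the claim that ``the entire curve lies in $M^2$'' is false for your choices. With $m_1=n_\alpha\sigma$, conjugation by $\epsilon_{\alpha+\beta}(x)$ produces a factor in $U_\alpha$ (since $(n_\alpha\sigma)^{-1}\cdot(\alpha+\beta)=\alpha$), which lies in neither $M=T\rtimes\langle\sigma\rangle$ nor any reductive subgroup generated by $T$, $\sigma$ and $G_{\alpha+\beta}$; and with $m_2=(\alpha+\beta)^\vee(t)$ for generic $t$, conjugation produces the factor $\epsilon_{\alpha+\beta}\bigl((t^{-2}+1)x\bigr)$, which is a nontrivial unipotent element not contained in $T\rtimes\langle\sigma\rangle$. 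No characteristic-$2$ cancellation removes these terms, so your family leaves $M^2$ for all $x\neq 0$. The fix is exactly the paper's configuration: take the pair $(m_1,m_2)=(\sigma,\epsilon_{\alpha+\beta}(1))$ (no $n_\alpha$, no torus element), the curve $v(x)=\epsilon_\alpha(x)\epsilon_\beta(x)\in R_u(P_{(\alpha+\beta)^\vee})$, and $M=\langle\sigma,G_{\alpha+\beta}\rangle$. Then $v(x)\cdot(m_1,m_2)=(\sigma\epsilon_{\alpha+\beta}(x^2),\epsilon_{\alpha+\beta}(1))$, because the only error term lands in $U_{\alpha+\beta}=Z(R_u(P_\lambda))\subset G_{\alpha+\beta}\subset M$ and $v(x)$ commutes with $\epsilon_{\alpha+\beta}(1)$; it is essential that $M$ contain the rank-one group $G_{\alpha+\beta}$ precisely to absorb these error terms, which your $M$ cannot do. The final non-conjugacy step is then a direct computation inside $M^\circ=SL_2$ (for $a\neq b$ the pairs $(m_1,m_2)_a$ and $(m_1,m_2)_b$ are not $M$-conjugate), rather than an appeal to the structure of $C_G(H)$.
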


\begin{proof}
Let $G$ and $\sigma$ be as in the hypothesis. Fix a maximal torus $T$ of $G$ and a Borel subgroup $B$ of $G^{\circ}$ containing $T$. Let $\Psi(G)^{+}=\{\alpha, \beta, \alpha+\beta\}$ be the set of positive roots of $G$ with respect to $T$ and $B$. 
Then $\sigma\cdot \alpha = \beta$ and $\sigma\cdot \beta$. In the following we use the commutation relations~\cite[Sec.~33.3]{Humphreys-book1} repeatedly. 

Let $\lambda:=(\alpha+\beta)^{\vee}\in Y(G)$. Then $P_\lambda=\langle T, U_{\alpha}, U_\beta, U_{\alpha+\beta}, \sigma \rangle$, and $R_u(P_\lambda)=\langle U_{\alpha}, U_\beta, U_{\alpha+\beta}\rangle$. Let $\epsilon_{\alpha}(x)\epsilon_{\beta}(y)\epsilon_{\alpha+\beta}(z)$ be an arbitrary element in $R_u(P_\lambda)$. For $x, y, z \in k$, we have
\begin{alignat*}{2}
\sigma\cdot (\epsilon_{\alpha}(x)\epsilon_{\beta}(y)\epsilon_{\alpha+\beta}(z)) &= 
\epsilon_{\beta}(x)\epsilon_{\alpha}(y)\epsilon_{\alpha+\beta}(z) \\
&=\epsilon_{\alpha}(y)\epsilon_{\beta}(x)\epsilon_{\alpha+\beta}(xy+z).
\end{alignat*}
Thus $\epsilon_{\alpha}(x)\epsilon_{\beta}(y)\epsilon_{\alpha+\beta}(z)$ is not in $C_G(\sigma)$ unless $x=y=0$. In particular, the curve $\{ \epsilon_{\alpha}(x)\epsilon_{\beta}(x)\mid x\in k\}$ is not in $C_G(H)$. 
On the other hand, we have 
\begin{equation*}
\sigma\cdot (e_{\alpha}+e_{\beta}) = e_{\alpha}+e_{\beta}.
\end{equation*}
The same argument as in Remark~\ref{D4nonsep} shows that $\langle \sigma \rangle$ is non-separable in $G$. 

Now let $v(x):=\epsilon_{\alpha}(x)\epsilon_{\beta}(x)$ for $x\in k$. Clearly $v(x)\in R_u(P_\lambda)$. Let $M:=\langle \sigma, G_{\alpha+\beta}\rangle$. Then $M$ is (non-connected) reductive. We have $Z(R_u(P_\lambda))=U_{\alpha+\beta}$. Let $(m_1, m_2):=(\sigma, \epsilon_{\alpha+\beta}(1))$. Define 
\begin{equation*}
(m_1, m_2)_x:=v(x)\cdot (m_1, m_2) = (\sigma\epsilon_{\alpha+\beta}(x^2), \epsilon_{\alpha+\beta}(1)).
\end{equation*}
We see that $(m_1, m_2)_x\in M^2$. Note that $\sigma$ centralizes $(m_1, m_2)_x$. Since $M^{\circ}=SL_2$, a direct computation shows that if $a\neq b$, $(m_1, m_2)_a$ is not $M$-conjugate to $(m_1, m_2)_b$.
\end{proof}

\begin{rem}
The same example was used to negatively answer a question of K\"ulshammer on representations of finite groups~\cite[Thm.~1.14]{Uchiyama-Classification-pre}. We believe that the conjugacy class problem in this section is closely related to K\"ulshammer's question. See~\cite{Bate-QuestionOfKulshammer} and~\cite{Martin-Lond-Kulshammer-Arx} for more on this. 
\end{rem}

\begin{rem}
In the proof of Theorem~\ref{conjugacy}, $p=2$ is crucial. Even if $p=3$ is bad for our $G$, if $p=3$ our argument breaks down. This is because when $p=3$ the subgroup $\langle \sigma \rangle$ is linearly reductive (since the order of $\sigma$ is not divisible by $p$) and linearly reductive subgroups are separable in $G$.  
\end{rem}

\section*{Acknowledgements}
This research was supported by Marsden Grant UOA1021 and a postdoctoral fellowship at the National Center for Theoretical Sciences at the National Taiwan University. The author would like to thank Michel Brion, Brian Conrad, Philippe Gille, Benjamin Martin, and Jean-Pierre Serre for helpful discussions and detailed comments. 
\bibliography{mybib}

\end{document}